\theoremstyle{plain}
\newtheorem{theorem}{Theorem}[section]
\newtheorem{lemma}{Lemma}[section]
\newtheorem{prop}{Proposition}[section]
\newtheorem{cor}{Corollary}[section]
\theoremstyle{definition}
\newtheorem{defin}{Definition}[section]
\newtheorem{remark}{Remark}[section]
\newtheorem*{acknowledgement}{Acknowledgements}
\newcommand{\half}{\displaystyle{\frac{1}{2}}}
\newcommand{\Z}{\mathbb Z}
\newcommand{\Q}{\mathbb Q}
\newcommand{\h}{{\bf h}}
\newcommand{\bggo}{\mathcal O}
\newcommand{\cala}{\mathcal A}
\newcommand{\mf}[1]{\displaystyle{\mathfrak{#1}}}
\newcommand{\g}{\displaystyle{\widetilde{\mathfrak{g}}}}
\newcommand{\mfn}{\displaystyle{\mathfrak{n}}}
\newcommand{\mfh}{\displaystyle{\mathfrak{h}}}
\newcommand{\mapdef}[1]{\ensuremath{\overset{#1}{\longrightarrow}}\xspace}
\newcommand{\comment}[1]{}
\DeclareMathOperator{\tr}{\ensuremath{Tr}}
\DeclareMathOperator{\spec}{\ensuremath{Spec}}
\DeclareMathOperator{\Gr}{\ensuremath{gr}}
\DeclareMathOperator{\Ch}{\ensuremath{ch}}
\DeclareMathOperator{\ad}{\ensuremath{ad}}
\DeclareMathOperator{\Id}{\ensuremath{Id}}
\DeclareMathOperator{\im}{\ensuremath{im}}
\DeclareMathOperator{\Sym}{\ensuremath{Sym}}
\DeclareMathOperator{\Ann}{\ensuremath{Ann}}
\DeclareMathOperator{\End}{\ensuremath{End}}
\DeclareMathOperator{\Hom}{\ensuremath{Hom}}
\begin{document}
\thanks{This paper is essentially an improved write-up of A.T.'s
Ph.D.~thesis from the University of Chicago, and Section \ref{S2} is a
more detailed exposition of an earlier preprint \cite{T}.}

\title[Center and representations of infinitesimal Hecke algebras]{Center
and representations of infinitesimal Hecke algebras of $\mf{sl}_2$}

\keywords{Center, primitive ideal, commutator, maximal vector}
\subjclass[2000]{Primary: 16D90; Secondary: 16S80, 17B20}

\author{Akaki Tikaradze}
\address{Department of Mathematics\\
The University of Toledo\\
Toledo, OH - 43606, USA}
\email[A.~Tikaradze]{\tt tikar@math.uchicago.edu}

\author{Apoorva Khare}
\address{Department of Mathematics\\
Yale University\\
New Haven, CT - 06511, USA}
\email[A.~Khare]{\tt apoorva.khare@yale.edu}

\begin{abstract}
In this paper, we compute the center of the infinitesimal Hecke algebras
$H_z$ associated to $\mf{sl}_2$; then using nontriviality of the center,
we study representations of these algebras in the framework of the BGG
category $\bggo$. We also discuss central elements in infinitesimal Hecke
algebras over $\mf{gl}_n$ and $\mf{sp}(2n)$ for all $n$. We end by
proving an analogue of Duflo's theorem for $H_z$.
\end{abstract}
\maketitle

\section{Introduction}

\subsection{Background}
In the paper \cite{EGG}, the authors introduce new families of algebras
which they call continuous Hecke algebras and infinitesimal Hecke
algebras (the latter being subalgebras of the former). They do this as a
way to provide a unifying treatment of the representation theories of
various algebras such as Drinfeld-Lusztig degenerate affine Hecke
algebras, and symplectic reflection algebras of \cite{EG} (which include
rational Cherednik algebras). We briefly recall their definition.
 
We fix once and for all a ground field $k$ (which will be assumed to be
algebraically closed of characteristic zero), and let $G$ be a reductive
algebraic group over $k$ (not necessarily connected), and $\rho :G\to
GL(V)$ a finite-dimensional representation. Then one can form the
semi-direct product algebra $TV \rtimes \bggo(G)^*$, where $TV$ is the
tensor algebra of $V$ and $\bggo(G)^*$ is the algebra of algebraic
distributions on $G$.

Now given a skew-symmetric $G$-equivariant $k$-linear pairing $\gamma :V
\times V \to \bggo(G)^*$, the authors define in \cite{EGG} an algebra
$H_{\gamma}(G)$, as a quotient of $TV \rtimes \bggo(G)^*$ by the
relations: $[x,y] = \gamma(x,y)$ for all $x,y \in V$.

One has an algebra filtration on $H_{\gamma}(G)$ obtained by assigning to
$V$ the filtration degree 1, and 0 to $\bggo(G)^{*}$. Hence we get a
natural map $: H_0(G) \twoheadrightarrow \Gr(H_{\gamma}(G))$, and
$H_{\gamma}(G)$ is called a {\it continuous Hecke algebra} if and only if
this map is an isomorphism (the {\it PBW property}).

If one takes distributions supported on $1\in G$, instead of
$\bggo(G)^{*},$ the resulting algebra is called an {\it infinitesimal
Hecke algebra} if the corresponding PBW property is satisfied. Hence this
algebra is a quotient of $TV \rtimes \mf{Ug}$ by a $\mf{g}$-invariant
relation: $[x,y]=\gamma(x,y)$, where $\gamma: V \times V \to \mf{Ug}$.
It is also a deformation of $\mf{Ug} \ltimes \Sym(V) = \mf{U}(\mf{g}
\ltimes V)$.

If $G$ is connected, one gets a continuous Hecke algebra if and only if
the corresponding algebra is an infinitesimal Hecke algebra. When $G$ is
a discrete group, one recovers the symplectic reflection algebras of
\cite{EG} in this way. So in a sense, symplectic reflection algebras and
infinitesimal Hecke algebras lie on opposite sides of the
spectrum.\medskip

In this paper, we will mainly be concerned with the question of computing
the center of the infinitesimal Hecke algebras of $SL_2$, and the
spectral decomposition for the analogue of the BGG category $\bggo$ for
these, over the center. It is well-known (\cite{BG}) that the center of
symplectic reflection algebras is either trivial, or the whole algebra is
a finitely generated module over its center (when the one-dimensional
parameter is 0).

It seems to us that one has a completely opposite picture for
infinitesimal Hecke algebras. Namely, infinitesimal Hecke algebras of
$SL_2$ and $GL_2$ have nontrivial (but not ``large") centers, so the
category $\bggo$ has a spectral decomposition. We expect similar
phenomena for infinitesimal Hecke algebras of higher rank as
well.\medskip

\subsection{Results}
We now describe (some of) the concrete results of the paper.
   
For the most part, we will work with $\mf{g} = \mf{sl}_2$ and $V=k^2$,
the standard representation with basis vectors $x, y$. In this case we
have $H_z = (TV \rtimes \mf{Ug}) / ([x,y]-z)$, where $z$ is a central
element of $\mf{Ug}$.

\begin{itemize}
\item We prove (Theorem \ref{T1}) that the center of $H_z$ is freely
generated by a nontrivial quadratic element for any value of $z$
(quadratic with respect to the filtration that assigns degree 1 to $V$
and 0 to $\mf{g}$). This central element also exists for $\mf{g} =
\mf{sp}(2n)$ and $V = k^{2n}$, at least when the deformation parameter is
trivial.

\item Moreover, it is shown (also in Theorem \ref{T1}) that this algebra
has no outer derivations for nonzero $z$, and if $z=0$, then the Euler
derivation generates the outer derivations.

\item The commutator quotient of $H_z$ turns out to be finitely generated
over the center (Theorem \ref{Tzz}); it is generated by $\deg(z)$
elements (where we look at $z$ as a polynomial in the Casimir element).
\end{itemize}\medskip

We also briefly consider the infinitesimal Hecke algebra associated with
$\mf{g} = \mf{gl}_n$ and $V = \mf{h} \oplus \mf{h}^{*}$, where
$\mf{h}=k^n$ is the standard representation. In this case (at least when
$\beta \equiv 0$), the center of $H_\beta$ contains at least two
(algebraically independent) quadratic elements. Moreover, we prove that
for any $\beta$, the center of $H_\beta$ is nontrivial (see Proposition
\ref{Pgln}).\medskip

We then consider some consequences of the nontriviality of the center of
$H_z$, such as the spectral decomposition of the BGG category $\bggo$,
the Harish-Chandra homomorphism, and so on. We also describe the
multiplicities of irreducible modules in Verma modules when the parameter
is a scalar.

Finally, we prove an analogue of Duflo's theorem on primitive ideals for
the infinitesimal Hecke algebra $H_z$, by utilizing a theorem of Ginzburg
\cite{Gi}.\medskip

\section{The center}\label{S2}

Let us start by recalling the exact definition of infinitesimal Hecke
algebras for $\mf{g} = \mf{sp}(2n)$ and $V=k^{2n}$. Denote by $\omega$
the symplectic form on $V$; one then identifies $\mf{g}$ with $\mf{g}^*$
via the pairing $\mf{g} \times \mf{g} \to k,\ (A,B) \mapsto \tr(AB)$, and
$\Sym \mf{g}$ with $\mf{Ug}$ via the symmetrization map. Then for any
$x,y \in V,\ A \in \mf{g}$, one writes
\[ \omega(x, (1 - T^2 A^2)^{-1} y) \det(1 - T A)^{-1} = l_0(x,y)(A) +
l_2(x,y)(A) T^2 + \dots \]

\noindent where $l_i(x,y) \in \Sym \mf{g} \cong \mf{Ug}$ is a polynomial
in $\mf{g}$ for each $i$.

For each polynomial $\beta = \beta_0 + \beta_2 T^2 + \beta_4 T^4 + \dots
\in k[T]$, in \cite{EGG} the authors define the algebra $H_{\beta}$ to be
the quotient of $TV \rtimes \mf{Ug}$ by the relations
\[ [x,y]=\beta _0 l_0(x,y) + \beta _2 l_2(x,y) + \dots \]

\noindent for all $x,y \in V$. It is proved in \cite{EGG} that this
yields an infinitesimal Hecke algebra (i.e., the PBW property
holds). Also note that setting $\beta \equiv 0$ yields the ``undeformed"
case: $H_0(\mf{sp}(2n)) = \mf{U}(\mf{sp}(2n) \ltimes k^{2n})$.\medskip

We will restrict ourselves to the case $n=1$. Let us describe more
explicitly a presentation (via generators and relations) of this algebra
(e.g., see \cite[Example 4.12]{EGG}).
We have $V = kx \oplus ky$, with $[h,x] = x, [h,y] = -y$ (where $e,f,h$
form the standard basis for $\mf{sl}_2$, with standard relations $[h, e]
= 2e, [h, f] = -2f$, and $[e, f] = h$). Then this algebra is a quotient
of $TV \rtimes \mf{Ug}$ by the relation $[x,y] = z$, where $z$ is a
central element of $\mf{Ug}$. We will denote this algebra by $H_z$.

A few years before the paper \cite{EGG} appeared, the representation
theory of $H_z$ was studied in great detail by A.~Khare in \cite{Kh}. In
particular, he proved the PBW property there, the proof being completely
different from the one in \cite{EGG}.\medskip

We start by determining the center and derivations of the algebra $H_z$.
We have the following

\begin{theorem}\label{T1}\hfill
\begin{enumerate}
\item The center of $H_z$ is a polynomial algebra in one variable, and
the generating central element has filtration degree $2$.

\item If $z = 0$, then $H^1(H_0, H_0)$ (Hochschild cohomology) is a rank
one free module over the center, and if $z \neq 0$, then every derivation
of $H_z$ is inner.
\end{enumerate}
\end{theorem}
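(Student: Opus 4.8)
The plan is to work with the PBW basis of $H_z$: by the PBW property, $H_z$ has a $k$-basis $\{f^a h^b e^c x^i y^j\}$ (in some fixed order), and the filtration by $V$-degree makes $\Gr(H_z) \cong \mf{U}(\mf{sl}_2 \ltimes k^2) = \mf{Usl}_2 \ltimes \Sym(k^2)$. I would begin with part (1). The inclusion $\mf{Usl}_2 \hookrightarrow H_z$ already shows the Casimir $\Omega = ef + fe + \tfrac12 h^2$ is a candidate central element, but it is \emph{not} central in $H_z$ since it fails to commute with $x,y$ (that commutator has $V$-degree $1$, so $\Omega$ can at best be corrected). So the first step is to search for a central element $t$ of filtration degree $2$ of the form $t = \Omega' + (\text{degree-}2\text{ terms in }x,y)$, where $\Omega'$ is a scalar shift of $\Omega$; concretely I expect $t$ to look like $\alpha(\Omega) + \beta(\Omega)(\text{stuff}) + (xy + yx)\,\gamma(\Omega) + \dots$ with $xy+yx$, $h$, $e$, $f$ and their products with $x,y$ appearing, coefficients being polynomials in $\Omega$. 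I would pin down $t$ by writing $t = \Omega + $ (ansatz) and solving $[t,x] = [t,y] = 0$; since $[t,h]=0$ is automatic once $t$ is written $\mf{sl}_2$-invariantly, only two equations remain, and the $z$-dependence enters through $[x,y]=z$. (In fact for $z=0$ one can also invoke the $\mf{sp}(2)$-result mentioned in the introduction and then deform.) Once $t$ is found, I would show $Z(H_z) = k[t]$: filtering $Z(H_z)$, any central element $c$ has a symbol $\sigma(c)$ in $\Gr(H_z)$ which must lie in the Poisson center of $\mf{Usl}_2 \ltimes \Sym(k^2)$; that Poisson center is easy to compute (it is generated by a single degree-$2$ element, essentially $\Omega + $ the symplectic moment-type term) — so $\sigma(c)$ is a polynomial in $\sigma(t)$, and subtracting the corresponding polynomial in $t$ and inducting on filtration degree gives $c \in k[t]$. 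The main point here is that $k[t]$ is a polynomial ring (no relations), which follows because $t$ has nonzero symbol of exact degree $2$, so $\{t^n\}$ have distinct symbols and are linearly independent.

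For part (2), the strategy is the standard one for computing $H^1$ via the identification of outer derivations with a bimodule quotient, combined with the $\Z$-grading. $H_z$ carries a $\Z$-grading by $\mathrm{ad}(h)$-eigenvalue (weight), with $x$ of weight $1$, $y$ of weight $-1$, $e$ of weight $2$, $f$ of weight $-2$, $h$ of weight $0$; every derivation decomposes into homogeneous components, so it suffices to treat homogeneous derivations of each weight $m$. For $m \neq 0$: a weight-$m$ derivation $D$ is determined by $D(h), D(e), D(f), D(x), D(y)$, and I would show it is inner by exhibiting an element $a \in H_z$ of weight $m$ with $D = \mathrm{ad}(a)$ — the key input is that $h$ is a semisimple element and, more precisely, that any weight-$m$ ($m\neq 0$) derivation kills a ``large enough'' subalgebra or is forced to be $\mathrm{ad}$ of something by looking at $D(h)$: writing $D(h) = $ (weight-$m$ element), the Jacobi/derivation identity $[D(h), x] = D([h,x]) - [h, D(x)]$ etc.\ pins down $D$ modulo inner derivations; since $\mathrm{ad}(h)$ acts invertibly on the weight-$m$ part for $m \neq 0$, one can solve. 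For $m = 0$: a weight-zero derivation $D$ satisfies $D(h) \in Z(H_z)\cap(\text{weight }0)$-ish, and more carefully $[D(h),x]=-[h,D(x)]$ forces $D(h)$ to centralize... I would reduce to $D(h)=0$ after subtracting an inner derivation, then $D$ preserves each $\mathrm{ad}(h)$-eigenspace, and $D(e), D(f)$ land in the $e$-, $f$-weight spaces; using $[e,f]=h$ one gets $D(h) = [D(e),f]+[e,D(f)] = 0$ is consistent, and then $D(x), D(y)$ are constrained by $[x,y]=z$ (so $[D(x),y]+[x,D(y)] = 0$). The outcome should be that for $z=0$ the only freedom left is the Euler (grading) derivation $x\mapsto x$, $y \mapsto y$, $e,f,h \mapsto 0$ (and its multiples by $Z(H_0) = k[t]$, since $t$ has weight $0$ — this gives the rank-one free module over the center), whereas for $z \neq 0$ the Euler derivation fails to be a derivation ($[x,y]=z$ has $V$-degree $0$ but the Euler derivation would send it to $2z \neq 0$), so even this disappears and every derivation is inner.

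The main obstacle I anticipate is the weight-zero computation in part (2): off-diagonal weights are controlled because $\mathrm{ad}(h)$ is invertible there, but at weight zero one must genuinely use the relation $[x,y]=z$ together with the structure of $\mf{sl}_2$, and carefully account for the center (which also sits in weight zero) to extract exactly the Euler derivation and nothing more. A secondary technical point is verifying that the ansatz for $t$ in part (1) actually closes up — i.e., that the two equations $[t,x]=[t,y]=0$ are simultaneously solvable over $k[\Omega]$ for every $z$ — but since existence is already asserted (and for $\mf{sp}(2n)$ with $\beta\equiv 0$ it is known), the real content is the uniqueness/polynomiality, i.e.\ the Poisson-center computation in $\Gr(H_z)$, which is routine.
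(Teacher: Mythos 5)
Your overall scaffolding (filter by $V$-degree, pass to the associated graded, induct) matches the paper's, but the two places where you declare the work ``routine'' or ``already asserted'' are precisely where the actual content lies, and one concrete step in part (2) fails. For part (1), you never establish existence of the degree-$2$ central element for an \emph{arbitrary} central parameter $z \in \mf{Z}(\mf{Ug})$: your ansatz is not carried out, and deferring to ``existence is asserted'' is circular, since that existence is part of the statement. In the paper this is the bulk of Section \ref{S2}: one shows $t - \half hz$ commutes with $\mf{g}$, computes $[x, t-\half hz]$, and then must realize it as $[x,q_z]$ for some $q_z \in \mf{Z}(\mf{Ug})$, which requires the explicit structure of $[\Delta^n,x]$ (the polynomials $f_n,g_n$ of Proposition \ref{P1}, Corollary \ref{C1}) and Proposition \ref{Pzz}. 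Likewise, your uniqueness argument hinges on the claim that the center of $\Gr(H_z)$ (or, after a second grading, the Poisson center of $\Sym(\mf{sl}_2\ltimes k^2)$) is generated by a single degree-$2$ element; this is essentially the $z=0$ case of the theorem itself, and proving it is exactly the paper's centralizer analysis (Lemma \ref{L2}, Proposition \ref{P3}(1): the centralizer of $\mf{Ug}$ is $k[\Delta,t_z]$, plus Corollary \ref{C1}(3) that only scalars in $\mf{Z}(\mf{Ug})$ commute with $x$). Note also that $\Gr(H_z)$ with respect to the $V$-filtration is the noncommutative algebra $H_0$, not a Poisson polynomial ring; your reduction can be repaired by a second filtration, but the invariant-theoretic computation still has to be done, and an index/transcendence-degree count alone does not show the invariant ring is a polynomial ring in one degree-$2$ generator.

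For part (2), the reduction of nonzero $\ad(h)$-weight components to inner derivations is fine (if $D$ has weight $m\neq 0$ then $D=\ad(-D(h)/m)$), but the weight-zero case is the whole theorem, and your one concrete proposed step there is wrong: you cannot ``reduce to $D(h)=0$ after subtracting an inner derivation,'' because for a weight-zero element $a$ one has $[a,h]=0$, so weight-zero inner derivations never change $D(h)$; one must argue directly that $D$ can be made to vanish on $\mf{Ug}$, which the paper does at the outset via semisimplicity of $\mf{g}$ (Whitehead's lemma), not via the grading. Even granting $D|_{\mf{Ug}}=0$, the remaining work --- showing $D(x)=\omega x$, $D(y)=\omega y$ with $\omega$ forced into $\mf{Z}(H)$ when $z=0$ and forced to vanish when $z\neq 0$ --- rests on Proposition \ref{P3} (to control maximal vectors of weight $1$), Lemma \ref{L5}, and the fact that the $f_j(\Delta)$ form a basis of $\mf{Z}(\mf{Ug})$; none of this is supplied, and your observation that the Euler derivation does not survive when $z\neq 0$ shows only that one particular outer derivation disappears, not that all derivations become inner. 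So both halves of the proposal have genuine gaps at their respective cores.
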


\noindent We prove the theorem in several steps, showing several small
results along the way. It is noteworthy that if we replace $H_z$ by its
natural quantization, then (if $z \neq 0$) the center becomes trivial;
see \cite[Theorem 11.1]{GK}.\medskip

\subsection{An anti-involution and a central element}

First, recall an (algebra) anti-isomorphism of $H_z$, called $j$, defined
in \cite{Kh}:
\[ j(x)=y, \quad j(y)=x, \quad j(h)=h, \quad j(e)=-f, \quad j(f)=-e. \]

\noindent More generally, let us also write down a basis for
$\mf{sp}(2n)$:
\begin{equation}
u_{jk} := e_{jk} - e_{k+n,j+n}, \quad v_{jk} := e_{j,k+n} + e_{k,j+n},
\quad w_{jk} := e_{j+n,k} + e_{k+n,j}
\end{equation}

We now claim

\begin{lemma}\label{L1}
Let $\Delta=h^2+4ef-2h$ be a multiple of the Casimir element of
$\mf{sl}_2$.
\begin{enumerate}
\item The map $j$, taking $u_{jk} \leftrightarrow u_{kj}, v_{jk}
\leftrightarrow -w_{jk}$, and $e_i \leftrightarrow e_{i+n}$ (in $V =
k^{2n}$) for all $1 \leq i \leq n$, is an anti-involution of
$\mf{U}(\mf{sp}(2n) \ltimes TV)$.

\item It also factors through an anti-involution of
$H_\beta(\mf{sp}(2n))$ for scalar parameters $\beta_0$, as well as for
all $H_z$ (here, $n=1$ and $z$ is any central element in $\mf{Ug}$).

\item For $n=1$ and any $z$, the map $j$ fixes the following elements in
$H_z:\ h,\ \Delta,\ z,\ t := ey^2+hxy-fx^2$.

\item Moreover, the element $t - \half hz$ commutes with $e,f,h$ in
$H_z$.
\end{enumerate}
\end{lemma}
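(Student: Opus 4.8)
The plan is to verify parts (1)–(4) by direct but carefully organized computation, since each is essentially a finite check once the right elements are written down. For part (1), I would first recall that the symplectic Lie algebra relations among the $u_{jk}, v_{jk}, w_{jk}$ are standard, and observe that $j$ is defined on generators; the only thing to check is that $j$ reverses products and respects the bracket relations, i.e. $j([A,B]) = [j(B),j(A)]$ for $A,B$ ranging over the listed basis, together with the cross-relations $[A, e_i]$ in the semidirect product $\mf{sp}(2n) \ltimes TV$. Because $j$ is an involution on the generating set, $j^2 = \Id$ is automatic, and it suffices to check the defining relations are preserved; I would exploit the symmetry $j$ has with respect to the ``$i \leftrightarrow i+n$'' block swap of $2n \times 2n$ matrices, under which $e_{ab} \mapsto \pm e_{\sigma(b),\sigma(a)}$ for the permutation $\sigma$ swapping the two blocks, and trace through the signs. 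For part (2), the point is that the defining relation $[x,y] = \beta_0 l_0(x,y) + \dots$ (resp. $[x,y]=z$) must be sent by $j$ to another valid relation; since $j$ fixes the Casimir-type generators of the relevant center and swaps $x \leftrightarrow y$ (picking up a sign $[y,x] = -[x,y]$ matched against $j$ being an anti-map), this is a compatibility check on the $l_i(x,y)$, and for $H_z$ it is immediate from $j(z) = z$ which itself follows once we know $j$ fixes $h$ and $\Delta$.

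For part (3), fixing $h$ and $z$ is immediate from the definition of $j$ and part (2); fixing $\Delta = h^2 + 4ef - 2h$ requires computing $j(ef) = j(f)j(e) = (-e)(-f) = ef$... but wait, $j$ is an anti-homomorphism, so $j(ef) = j(f)j(e) = (-e)(-f) = ef$, hence $j(4ef) = 4ef$ and $j(\Delta) = \Delta$. The interesting element is $t = ey^2 + hxy - fx^2$: here I would compute $j(t) = j(y)^2 j(e) + j(y)j(x)j(h) + \text{(sign)}\, j(x)^2 j(f)$ term by term, so $j(ey^2) = j(y^2)j(e) = x^2(-f) = -fx^2$, $j(hxy) = j(y)j(x)j(h) = yxh$, and $j(fx^2) = j(x^2)j(f) = y^2(-e) = -ey^2$; thus $j(t) = -fx^2 + yxh - (-ey^2) = ey^2 + yxh - fx^2$. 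To conclude $j(t) = t$ I must show $yxh = hxy$ in $H_z$ up to terms that cancel, i.e. reconcile $yxh$ with $hxy$ using the commutation relations $[h,x]=x$, $[h,y]=-y$, $[x,y]=z$; this reordering will produce correction terms involving $z$, $xy$, $yx$, so I expect $j(t) = t$ to hold only after using that $z$ is central and the precise bracket relations — this is the first place where a genuine (if short) calculation is unavoidable.

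Part (4) is the heart of the lemma and where I expect the real work: showing $t - \half h z$ is $\mf{sl}_2$-invariant, i.e. commutes with $e$, $f$, and $h$ inside $H_z$. By the anti-involution $j$ (which fixes $h$, $z$, $t$, hence $t - \half hz$, and sends $e \leftrightarrow -f$), it suffices to check commutation with $h$ and with $e$; commutation with $f$ then follows by applying $j$. Commutation with $h$ should be the easy case: $t$ is built from monomials $ey^2, hxy, fx^2$, and under $\ad h$ each summand has a well-defined weight — $e$ has $h$-weight $2$, $y^2$ has weight $-2$, so $ey^2$ has weight $0$; $h$ has weight $0$ and $xy$ has weight $0$; $f$ has weight $-2$ and $x^2$ has weight $2$ — so every term is a weight-zero vector and $[h, t] = 0$, while $[h, hz] = 0$ since $z$ is central and $[h,h]=0$. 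The hard part will be $[e, t]$: I would expand $[e, ey^2 + hxy - fx^2]$ using $[e,y] = ?$ (note $[e,y]$ must be computed from $[h,y]=-y$ and the $\mf{sl}_2$-action on $V$; in the standard representation $e \cdot y = x$, so $[e,y] = x$ in the semidirect product, and $[e,x]=0$), together with $[e,h] = -2e$, $[e,f]=h$, and $[x,y]=z$. Collecting terms, the $z$-independent part should vanish because $t$ is (up to lower order) the invariant generating the center of $H_0 = \mf{U}(\mf{sl}_2 \ltimes k^2)$, and the leftover terms linear in $z$ should be exactly cancelled by $-\half [e, hz] = -\half [e,h] z = -\half(-2e)z = ez$; matching this against the $z$-terms from $[e,t]$ is the crux. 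I would organize this by writing $[e,t] = \alpha\, ez$ for some scalar $\alpha$ extracted by a bookkeeping of the commutators $[e, hxy]$ and $[e, ey^2]$ (where moving $e$ past $y$'s generates $x$'s which then bracket with remaining $y$'s to give $z$), and then verify $\alpha = \half \cdot 2 = 1$ so that the combination is central. I anticipate the sign conventions on the $\mf{sl}_2$-action on $V$ (whether $e\cdot y = x$ or $-x$, etc.) to be the main source of potential error, so I would pin those down first from the given relations $[h,x]=x$, $[h,y]=-y$ before doing anything else.
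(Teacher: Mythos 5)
Your plan follows the paper's own route: parts (1)--(2) are checked on generators and relations, part (3) by applying the anti-map term by term, and part (4) by reducing to $e$ and $h$ via $j$ and matching the $z$-linear part of $[e,t]$ against $-\half[e,h]z = ez$. So the approach is right, but the computations you actually write down contain errors, and the two places you defer (``a genuine calculation is unavoidable'', ``the crux'') are exactly where the content lies, so they need to be fixed/completed rather than sketched.

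Concretely: in (3), $j(ey^2) = x^2\,(-f) = -x^2f$, not $-fx^2$, and $j(fx^2) = y^2(-e) = -y^2e$, not $-ey^2$; also $j(hxy) = j(y)j(x)j(h) = xyh$, not $yxh$, and since $xy$ has weight zero this middle term needs no correction at all ($xyh = hxy$ exactly). The genuine reordering corrections come from the two terms you commuted silently: $-x^2f = -fx^2 + [f,x^2]$ and $y^2e = ey^2 - [e,y^2]$, and these cancel identically because $[e,y^2] = xy + yx = [f,x^2]$. Contrary to your expectation, no $z$-terms arise, which is why $j(t) = t$ holds in $H_z$ for every $z$. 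In (4), the appeal to ``$t$ is the invariant generating the center of $H_0$'' is circular -- that $t$ commutes with $\mf{g}$ at $z=0$ is part of what is being proved -- so you must do the bookkeeping you describe; it gives $[e,t] = e(xy+yx) - 2exy + hx^2 - hx^2 = e(yx-xy) = -ez$, i.e.\ your $\alpha$ equals $-1$, not $+1$, and this cancels the $+ez$ coming from $-\half hz$; commutation with $f$ then follows by applying $j$, as you say. Finally, a minor point on (1): your guessed formula $e_{ab} \mapsto \pm e_{\sigma(b),\sigma(a)}$ does not describe $j$; on $\mf{sp}(2n)$ the map is $X \mapsto -\tau X \tau^{-1}$ (block swap conjugation with a sign, no transposition of indices), and recognizing this, as the paper does, makes the verification in (1) essentially immediate.
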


\begin{proof}\hfill
\begin{enumerate}
\item Consider $\mf{sp}(2n) \hookrightarrow \mf{gl}(2n)$. Then on
$\mf{sp}(2n)$, $j$ is the map $j(X) := - \tau X \tau^{-1}$, where
$\displaystyle \tau = \tau^{-1} = \begin{pmatrix} 0 & \Id_n\\ \Id_n &
0\end{pmatrix}$. On $V$, $j$ is the map $v \mapsto \tau \cdot v$. One now
easily checks that this yields an anti-involution of $\mf{Ug} \ltimes
TV$.

\item For a scalar parameter $\beta_0$, the added relations we have to
quotient $\mf{U}(\mf{sp}(2n)) \ltimes TV$ by, are: $[e_i, e_k] = \beta_0
\delta_{|i-k|, n} (i-k)/n$. These are clearly preserved by $j$.
Similarly, $j$ preserves $[x,y]$ as well as $z = z(\Delta)$.

\item That $j$ fixes $h$ and $\Delta$ (and hence $z$) is easy. Now
applying $j$ to $t$, we get
\[ j(t) = -x^2f+xyh+y^2e=hxy+ey^2-fx^2-[e,y^2]-(-[f,x^2]). \]

\noindent But the last two terms cancel each other, since
\[ [e,y^2] = [e,y] y + y[e,y] = xy + yx = x[f,x] + [f,x]x = [f,x^2], \]

\noindent so this element is indeed fixed by $j$.

\item Note that
\[ [e,t] = e(xy+yx) - 2exy + hx^2 - hx^2 = eyx-exy = -ez, \]

\noindent so we see that $[e,t - \half hz]=0$. Moreover, $t - \half hz$
also commutes with $h$. Finally, applying $-j$ to $et = te$, we get $tf =
ft$.
\end{enumerate}
\end{proof}

Though we do not use it in this manuscript, we now generalize the above
central element (note that $t \in \mf{Z}(H_0)$) for all $n$:

\begin{prop}\label{Psp}
For any $n$, the ``undeformed" algebra $H_0(\mf{sp}(2n))$ has at least
one central element, namely:
\[ t_n := \sum_{1 \leq r,s \leq n} (v_{rs} e_{r+n} e_{s+n} + u_{rs} e_s
e_{r+n} + u_{sr} e_r e_{s+n} - w_{rs} e_r e_s) \]

\noindent where $\{ e_i : 1 \leq i \leq 2n \}$ is the standard basis of
$V = k^{2n}$.
\end{prop}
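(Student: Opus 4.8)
The plan is to verify directly that $t_n$ commutes with all generators of $H_0(\mf{sp}(2n)) = \mf{U}(\mf{sp}(2n) \ltimes k^{2n})$, i.e., with the Lie algebra elements $u_{jk}, v_{jk}, w_{jk}$ and with the vectors $e_i \in V$. Since $t_n$ has filtration degree $2$ and lies in $\mf{Ug} \cdot \Sym^2(V) + \dots$, and since $V$ is abelian in $H_0$, computing $[e_i, t_n]$ only involves commuting $e_i$ past the $\mf{g}$-factors $v_{rs}, u_{rs}, w_{rs}$ (which act on $V$ in the standard way), so this reduces to a finite linear-algebra identity in the structure constants. I would organize this by first fixing explicit formulas for the action of the basis elements $u_{rs}, v_{rs}, w_{rs}$ on the basis $\{e_i\}$ of $V = k^{2n}$, reading them off from $u_{jk} = e_{jk} - e_{k+n,j+n}$, etc.

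Concretely, the first main step is to show $[e_i, t_n] = 0$ for $1 \le i \le 2n$. Writing $t_n = \sum_{r,s} c_{rs}$ with $c_{rs} = v_{rs} e_{r+n} e_{s+n} + u_{rs} e_s e_{r+n} + u_{sr} e_r e_{s+n} - w_{rs} e_r e_s$, one computes $[e_i, c_{rs}] = [e_i, v_{rs}] e_{r+n} e_{s+n} + [e_i, u_{rs}] e_s e_{r+n} + [e_i, u_{sr}] e_r e_{s+n} - [e_i, w_{rs}] e_r e_s$, using that the $e_j$'s commute among themselves. Each bracket $[e_i, \cdot]$ is again a vector in $V$, so the whole expression becomes a sum of degree-one terms $e_m$ times degree-two monomials; summing over $r, s$ and collecting coefficients of each cubic monomial $e_m e_p e_q$ should give zero by symmetry. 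The $n=1$ case already done in Lemma \ref{L1}(4), where $t_1 = t = ey^2 + hxy - fx^2$ (upon identifying $v_{11} = e$, $u_{11} = h$ in a suitable normalization, $w_{11} = f$, $e_1 = x$, $e_2 = y$), serves as a sanity check and a template: the key cancellation there, $[e,y^2] = [f,x^2]$, is the shadow of the general identity, and I would expect the symmetric structure of $t_n$ (note the symmetrized combination $u_{rs} e_s e_{r+n} + u_{sr} e_r e_{s+n}$) to be precisely what forces the cancellation.

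The second main step is to show $[\xi, t_n] = 0$ for $\xi$ ranging over $u_{jk}, v_{jk}, w_{jk}$. Here one uses $\mf{g}$-equivariance: $t_n$ is built from $\mf{g}$-equivariant pieces, so it suffices to check that $t_n$ is a $\mf{g}$-invariant element of $\mf{Ug} \otimes \Sym V$ under the adjoint-tensor action, or more directly that $[\xi, t_n] = \sum_{r,s}([\xi, (\mf{g}\text{-part})] \cdot (\Sym^2 V\text{-part}) + (\mf{g}\text{-part}) \cdot \xi\text{-action on } \Sym^2 V\text{-part})$ telescopes. I would either invoke that $t_n$ is manifestly the image of an $\mf{sp}(2n)$-invariant in $\mf{g} \otimes \Sym^2 V$ (which can be seen by noting $\{v_{rs}, u_{rs}+u_{sr}\text{-type}, w_{rs}\}$ spans a copy of the adjoint inside $\Sym^2 V \otimes \Sym^2 V$ paired against its dual — essentially the statement that $\mf{sp}(2n) \cong \Sym^2 V$ as $\mf{g}$-modules), or, if a cleaner argument is wanted, use the anti-involution $j$ from Lemma \ref{L1}(1): one checks $j(t_n) = t_n$ (the map $v_{rs} \leftrightarrow -w_{rs}$, $u_{rs} \leftrightarrow u_{sr}$, $e_i \leftrightarrow e_{i+n}$ sends $c_{rs}$ to $c_{sr}$ up to the transpositions visible in the formula), so once $t_n$ commutes with the "positive" part it automatically commutes with the "negative" part, cutting the work roughly in half.

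The main obstacle is purely bookkeeping: keeping track of the index shifts $i \mapsto i+n$ and the signs in $u_{jk}, v_{jk}, w_{jk}$ when computing the actions on $V$, and then correctly collecting coefficients of cubic monomials $e_m e_p e_q$ across the double sum over $(r,s)$. There is no conceptual difficulty — the existence of such a central element is forced by $\mf{sp}(2n) \cong \Sym^2(k^{2n})$ as $\mf{g}$-modules, which gives a canonical invariant in $(\Sym^2 V)^* \otimes \Sym^2 V \cong \mf{g} \otimes \Sym^2 V \subset \mf{Ug} \otimes \Sym V = \mf{U}(\mf{g} \ltimes V)$ — but turning that conceptual reason into the explicit cancellation requires care with the normalizations implicit in the basis $\{u_{jk}, v_{jk}, w_{jk}\}$. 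I would do the $n=1$ and $n=2$ cases explicitly to pin down conventions, then write the general argument invoking the $\mf{g}$-module isomorphism to avoid grinding through the full double sum.
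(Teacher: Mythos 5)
Your proposal is correct and takes essentially the same route as the paper: the paper's proof is exactly this direct verification on generators, checking $[e_i,t_n]=0$ for $1\le i\le n$ and $[u_{pq},t_n]=[v_{pq},t_n]=0$ by computation, and using the anti-involution $j$ of Lemma \ref{L1} (which fixes $t_n$) to deduce the cases $e_{i+n}$ and $w_{pq}$, i.e., to halve the bookkeeping just as you suggest. Your optional conceptual shortcut via $\mf{sp}(2n)\cong\Sym^2 V$ for the $\ad\mf{g}$-invariance is sound (the adjoint action on $\mf{Ug}\cdot\Sym^2 V$ agrees with the tensor action since $V$ is abelian in $H_0$) but is not needed beyond what the paper's computational outline already does.
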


\noindent Note that if $n=1$, then $t_n = 2t$.

\begin{proof}
We outline the steps of this long-winded but straightforward (and heavily
computational) proof. Define $a_{rs} := v_{rs} e_{r+n} e_{s+n} - w_{rs}
e_r e_s$, and $b_{rs} := u_{rs} e_s e_{r+n} + u_{s,r} e_r e_{s+n}$ for
all $r,s$. The steps of the verification are:
\begin{enumerate}
\item The anti-involution $j$ (in Lemma \ref{L1}) preserves $a_{rs},
b_{rs}$ for all $r,s$; hence it preserves $t_n$ too.

\item $[e_i, a_{rs} + b_{rs}] = 0$ for all $r,s$ and $1 \leq i \leq n$;
hence the same holds by replacing $e_i$ by $e_{i+n}$, using $j$.

\item $\left[ u_{pq}, \sum_{r,s = 1}^n a_{rs} \right] = \left[ u_{pq},
\sum_{r,s = 1}^n b_{rs} \right] = 0\ \forall p,q$.

\item $[v_{pq}, t_n] = 0\ \forall p,q$, whence $[w_{pq}, t_n] = 0$ using
$j$.
\end{enumerate}
\end{proof}

\subsection{Commutators of powers of the Casimir element}

By Lemma \ref{L1}, $j$ fixes the subalgebra generated by the elements $t,
h$, and $\mf{Z}(\mf{Ug})$ (the center of $\mf{Ug}$).
Hence our goal now is to exhibit an element from this algebra which will
commute with $e,x,h$ (and hence with $y,f$, applying $j$), and therefore
will lie in the center of $H_z$.

We now compute that
\begin{eqnarray*}
[x,t] & = & e(zy+yz)-x^2y+hxz+yx^2\\
& = & 2ezy-e[z,y]+hzx-h[z,x]-(2zx-[z,x])\\
& = & 2ezy-2zx+[z,x]-e[z,y]+hzx-h[z,x],
\end{eqnarray*}
and
\[ [x,\half hz] = -\half xz + \half h[x,z] = -\half zx + \half [z,x] +
\half h[x,z], \]

\noindent so we get that
\begin{equation}\label{Ereqd}
[x,t-\half hz] = 2ezy - \frac{3}{2} zx + \half[z,x] - e[z,y] - \half
h[z,x] + hzx.
\end{equation}

\noindent Denote this element by $\omega$. We now want to produce an
element $q_z$ in the center of $\mf{Ug}$ such that $[x, q_z] = \omega$,
for then $t - \half hz - q_z$ will be a central element in $H_z$.

To show this, we will analyze $\mf{sl}_2$-maximal vectors in $\mf{Ug}$
(i.e., vectors annihilated by the adjoint action of $e$) and in $H_z$, of
various weights. A first step in looking at such things is realizing that
$H_z$ is a direct sum of finite-dimensional $\mf{g}$-modules (this is
true for any infinitesimal Hecke algebra):

\begin{lemma}\label{L0}
Given Lie algebras $\mf{g} \neq 0,\mf{h}'$ that are semisimple and
abelian respectively, define $\mf{h} := \mf{h}_{\mf{g}} \oplus \mf{h}'$,
the Cartan subalgebra of the reductive Lie algebra $\g := \mf{g} \oplus
\mf{h}'$.
If $V$ is an $\mf{h}$-semisimple completely reducible $\g$-module, then
so is $\cala := \mf{U}\g \ltimes TV$.
\end{lemma}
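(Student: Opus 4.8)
The plan is to build everything out of the weight-space decomposition coming from the abelian Cartan $\mfh$, and then use complete reducibility of $\g$-modules together with the structure of $\cala$ as a $\g$-module under the adjoint action. The point is that $\cala = \mf{U}\g \ltimes TV$ is generated, as an algebra, by $\g$ and $V$; and the adjoint action of $\g$ on $\cala$ (defined by $\ad(a)(b) = ab - ba$ for $a \in \g$, extended as a derivation) makes $\cala$ into a $\g$-module with $\g \subseteq \cala$ acting by its own adjoint representation and $V \subseteq \cala$ being a $\g$-submodule isomorphic to the given module $V$.

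First I would record the standard fact that $\mf{U}\g$, under its adjoint action, is a direct sum of finite-dimensional $\mf{g}$-modules: since $\g = \mf{g} \oplus \mf{h}'$ with $\mf{h}'$ central, the adjoint action factors through $\mf{g}$, and $\mf{U}\g \cong \mf{U}\mf{g} \otimes \mf{U}\mf{h}'$ as a $\mf{g}$-module, while $\mf{U}\mf{g} = \bigcup_n \mf{U}\mf{g}_{\leq n}$ is an increasing union of finite-dimensional $\ad \mf{g}$-stable subspaces (the standard filtration is $\mf{g}$-stable), each of which is completely reducible because $\mf{g}$ is semisimple (Weyl's theorem). So $\mf{U}\g$ is $\mfh$-semisimple and completely reducible as a $\g$-module. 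Next, I would observe that $TV = \bigoplus_{m \geq 0} V^{\otimes m}$ is a direct sum of finite-dimensional $\g$-modules, each $V^{\otimes m}$ being completely reducible (again Weyl's theorem for the semisimple part, and the $\mf{h}'$-action being diagonalizable since $V$ is $\mfh$-semisimple and $\mf{h}'$ acts by scalars on each isotypic piece — here one uses the hypothesis that $V$ is $\mfh$-semisimple).

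The main step is to combine these two. As a vector space, $\cala = \mf{U}\g \ltimes TV$ has a PBW-type basis, and more usefully it carries a filtration (assign degree $1$ to $V$, degree $0$ to $\g$, say — or the total degree) whose associated graded is $\mf{U}\g \otimes \Sym V$ or $\mf{U}\g \otimes TV$ depending on the chosen filtration; the adjoint $\g$-action respects this filtration, so it suffices to show each filtered piece is finite-dimensional and $\g$-stable, and then that $\Gr \cala$ is a completely reducible $\g$-module. But $\Gr \cala \cong \mf{U}\g \otimes (\text{a graded piece of } TV)$ as a $\g$-module, and a tensor product of two completely reducible finite-dimensional $\g$-modules is completely reducible; taking the (directed) union over the filtration degrees, $\Gr \cala$ is a direct sum of finite-dimensional completely reducible $\g$-modules, hence so is $\cala$ itself (complete reducibility and $\mfh$-semisimplicity both pass from $\Gr$ back to the filtered object when all pieces are finite-dimensional and the category is semisimple). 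The $\mfh$-semisimplicity is immediate since every generator lies in an $\mfh$-weight space.

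The hard part — really the only place one must be slightly careful — is the interaction between the non-commutativity of $\cala$ and the $\g$-module structure: one must check that the commutation relations defining $\cala$ (both $[\g,\g]$, $[\g,V]$, and $[x,y] = \gamma(x,y) \in \mf{U}\g$, or just $[x,y] = 0$ in the undeformed case) are $\g$-equivariant, so that the adjoint action is well-defined on the quotient $TV \rtimes \mf{U}\g \twoheadrightarrow \cala$ and so that the filtration is genuinely $\g$-stable. This is where one invokes that $\gamma$ (or here, the defining relation) is $\g$-invariant by hypothesis. Once that is in place, everything reduces to the two classical facts (complete reducibility of finite-dimensional modules over a semisimple Lie algebra, and weight-space decomposition for a diagonalizable abelian action) applied degree by degree, and the union-of-finite-dimensional-submodules argument finishes it.
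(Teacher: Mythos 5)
Your argument is correct and is essentially the paper's own proof: both decompose $\cala \cong \mf{U}\g \otimes TV$ under the adjoint action into pieces of the form $\mf{U}\mf{g} \otimes (\text{basis of } \Sym \mf{h}') \otimes T^n V$, use the standard filtration on $\mf{U}\mf{g}$ to exhibit each piece as an increasing union of finite-dimensional completely reducible $\mfh$-semisimple $\g$-modules, and conclude because direct sums and unions of such modules are again completely reducible. One minor remark: since $TV$ is the free tensor algebra, no relation $[x,y]=\gamma(x,y)$ is imposed in $\cala$ itself (that only enters for the quotients in Corollary \ref{C0}), so $\cala$ is honestly graded by tensor degree in $V$ and your detour through an associated graded (and the claim that complete reducibility passes back from $\Gr$ to the filtered object) is unnecessary --- the paper works directly with this grading.
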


\begin{cor}\label{C0}
Every infinitesimal Hecke algebra is such a direct sum, and of
finite-dimensional $\g$-modules.
\end{cor}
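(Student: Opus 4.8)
The plan is to deduce this directly from Lemma \ref{L0} by checking that every infinitesimal Hecke algebra $H_\gamma$ fits into the hypotheses of that lemma, and that the finite-dimensionality of the summands is automatic. First I would recall that by construction $H_\gamma$ is a quotient of $TV \rtimes \mf{Ug}$ by a $\mf{g}$-invariant ideal, so whatever $\mf{g}$-module decomposition $TV \rtimes \mf{Ug}$ admits descends to $H_\gamma$; thus it suffices to treat $\cala = \mf{U}\g \ltimes TV$ itself. Here $\g$ is reductive — in the running cases $\mf{sl}_2$, $\mf{sp}(2n)$, $\mf{gl}_n$ — so we may write $\g = \mf{g} \oplus \mf{h}'$ with $\mf{g}$ semisimple and $\mf{h}'$ the center, exactly the setup of Lemma \ref{L0}. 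The representation $V$ defining the Hecke algebra is finite-dimensional and, being a representation of a reductive Lie algebra coming from an algebraic group, is completely reducible and $\mf{h}$-semisimple; so the lemma applies and gives that $\cala$ is an $\mf{h}$-semisimple completely reducible $\g$-module.

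The second assertion — that the $\g$-isotypic pieces are finite-dimensional — I would get from a weight/grading bound. Put a grading on $\cala$ by total degree in $V$ (degree $1$) and in $\mf{g}$ (degree $0$), or more finely by the PBW filtration; each graded piece is finite-dimensional because $\mf{Ug}$ in any fixed filtration degree is finite-dimensional and $TV$ in any fixed $V$-degree is finite-dimensional. Since $\g$ acts by degree-preserving operators, each graded piece is a finite-dimensional $\g$-submodule, and $\cala$ is the direct sum of these; intersecting with the complete reducibility from Lemma \ref{L0}, each isotypic component lands inside a single (finite) direct sum of graded pieces, hence is finite-dimensional. The same bound survives passage to the quotient $H_\gamma$ since the defining relations are homogeneous for a compatible filtration (the PBW property ensures the associated graded is $H_0$, which is manifestly a sum of finite-dimensional $\g$-modules).

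The only point requiring a little care — the main (minor) obstacle — is the interface between ``$\mf{h}$-semisimple completely reducible'' as an abstract $\g$-module and the concrete claim of a decomposition into \emph{finite-dimensional} summands: complete reducibility of an infinite-dimensional module over a semisimple Lie algebra does not a priori force the simple constituents to be finite-dimensional. This is exactly why the grading argument of the previous paragraph is needed: it confines each simple summand to a finite-dimensional graded piece, after which finite-dimensionality (and hence, by Weyl's theorem, the usual highest-weight structure) is immediate. With that observed, the corollary is just the conjunction of Lemma \ref{L0} with this elementary bound, and no further computation is required.
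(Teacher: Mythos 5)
Your proposal follows essentially the paper's own route: the paper also disposes of the corollary by noting that every infinitesimal Hecke algebra is a quotient of $\cala = \mf{U}\g \ltimes TV$ with $V$ finite-dimensional, invoking Lemma \ref{L0}, and observing that the simple constituents of $\cala$ are forced to be finite-dimensional (the paper phrases this by saying the highest weights of summands are sums of two dominant integral weights, one from each tensor factor $\mf{U}\g$, $TV$; your filtration argument via the $\g$-stable finite-dimensional pieces $F^m\mf{Ug}\otimes T^nV$ is the same mechanism that already appears in the proof of Lemma \ref{L0}). One sentence of yours should be corrected, though: the claim that ``each isotypic component lands inside a single (finite) direct sum of graded pieces, hence is finite-dimensional'' is false --- the trivial isotypic component of $H_0$ under $\ad\mf{g}$ already contains the infinite-dimensional center, spread over all $V$-degrees. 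What the corollary needs, and what your own setup delivers, is finite-dimensionality of each \emph{simple summand}: any irreducible constituent is generated by a single vector, that vector lies in some finite-dimensional $\g$-stable subspace $F^m\mf{Ug}\otimes\bigl(\bigoplus_{n\le N}T^nV\bigr)$, hence the whole simple summand is contained in it and is finite-dimensional; this property then passes to the quotient $H_\gamma$ exactly as you say. With that substitution your argument is complete and matches the paper's.
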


The corollary is obvious since such algebras are quotients of $\cala$ for
some finite-dimensional $V$ (so that all ``highest weights" of summands
in $\cala$ are sums of two dominant integral weights for $\mf{g}$, one
from each tensor factor $\mf{U}\g, TV$).

\begin{proof}[Proof of Lemma \ref{L0}]
The $\mf{h}$-semisimplicity is obvious. It is also easy to check that
$\cala$ is graded: $\cala = \bigoplus_{n,I} \cala_{n,I}$. Here,
$\cala_{n,I} := \mf{Ug} \otimes (k \cdot I) \otimes T^n V$, where $n \geq
0$ and $I$ runs over some fixed basis of $\Sym \mf{h}'$. Moreover, each
summand has an increasing filtration by finite-dimensional $\g$-modules,
using the standard filtration on $\mf{Ug}$:
\[ \cala_{n,I} = \cala_{n,I}^\bullet = (F^\bullet\mf{Ug}) \otimes (k
\cdot I) \otimes T^n V. \]

\noindent Using Zorn's lemma, one easily shows that a union of
finite-dimensional (and hence completely reducible) $\mf{h}$-semisimple
$\g$-modules is itself completely reducible. But then, so is $\cala =
\bigoplus_{n,I} \cala_{n,I}$.
\end{proof}

Next, we have

\begin{lemma}\label{L2}
The map $\varphi : k[X,Y] \to \mf{Ug}$, sending $X^m Y^n \mapsto \Delta^m
e^n$, is a vector space isomorphism onto the set of maximal vectors in
(the $\ad \mf{g}$-module) $\mf{Ug}$.
\end{lemma}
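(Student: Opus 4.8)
The plan is to decompose $\mf{Ug}$ as an $\ad\mf{g}$-module and count maximal vectors by weight. Since $\mf{g} = \mf{sl}_2$ is semisimple and $\mf{Ug}$ is a union of finite-dimensional submodules (the standard filtration $F^\bullet\mf{Ug}$ is $\ad\mf{g}$-stable with finite-dimensional pieces), $\mf{Ug}$ is completely reducible as an $\ad\mf{g}$-module. Hence the space of maximal vectors (those killed by $\ad e$) has, in each $\mfh$-weight $2n \geq 0$, dimension equal to the multiplicity of the irreducible $L(2n)$ in $\mf{Ug}$. So the first step is to compute these multiplicities: I would pass to the associated graded $\Gr\mf{Ug} = \Sym\mf{g}$ and use the classical fact that $\Sym\mf{g} \cong \Sym(\mf{g})^{\mf{g}} \otimes \mathcal{H}$ (harmonics), equivalently that the $\ad\mf{g}$-module structure of $\Sym\mf{g}$ is that of $k[\Delta] \otimes (\text{harmonic polynomials})$, and for $\mf{sl}_2$ the harmonic polynomials of a given degree form a single irreducible. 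Concretely, in $\Sym\mf{g}$ the maximal vectors are spanned by $\Delta^m e^n$ (here $e$ is itself a maximal vector of weight $2$, $\Delta$ is invariant, and one checks $e^n$ is harmonic and maximal of weight $2n$), giving multiplicity-one in each pair $(m,n)$.

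Next I would lift this back to $\mf{Ug}$ itself. The point is that $\varphi$ as defined lands in $\mf{Ug}$, and I must check (i) each $\Delta^m e^n$ is genuinely a maximal vector in $\mf{Ug}$, i.e. $[e, \Delta^m e^n] = 0$, which is immediate since $\Delta \in \mf{Z}(\mf{Ug})$ commutes with everything and $[e, e^n] = 0$; (ii) the $\mfh$-weight of $\Delta^m e^n$ is $2n$ (clear, as $\Delta$ has weight $0$ and $e$ has weight $2$); (iii) the images are linearly independent and span all maximal vectors. For (iii): linear independence follows by looking at top-degree terms — $\Delta^m e^n$ has filtration degree $2m+n$ with leading symbol $\Delta^m e^n \in \Sym\mf{g}$ (abusing notation), and distinct monomials $X^m Y^n$ give leading symbols of distinct bidegree-type, so a nontrivial relation would force a nontrivial relation among the $\Delta^m e^n$ in $\Sym\mf{g}$, which we have excluded. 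For spanning: given any maximal vector $w \in \mf{Ug}$ of weight $2n$, it lies in some $F^d\mf{Ug}$; its leading symbol in $\Gr^d\mf{Ug} = \Sym^d\mf{g}$ is either zero or a maximal vector of weight $2n$ there, hence (by the $\Sym\mf{g}$ computation) a scalar multiple of $\Delta^m e^n$ with $2m+n = d$; subtract the corresponding multiple of $\Delta^m e^n$ and induct on filtration degree.

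The main obstacle is really just being careful with the $\Sym\mf{g}$ computation and the induction — nothing deep, but one should make sure the weight bookkeeping is airtight (only \emph{even} weights occur, and for each even weight $2n$ the maximal vectors of that weight in $F^d\mf{Ug}$ are exactly spanned by those $\Delta^m e^n$ with $2m + n = d$ and $2m + n \leq$ the ambient degree). An alternative to the associated-graded argument, if one prefers to stay in $\mf{Ug}$, is to use the known $\ad\mf{sl}_2$-decomposition of $\mf{Ug}$ directly (e.g. via the Harish-Chandra / Kostant description of $\mf{Z}(\mf{Ug})$ and the fact that $\mf{Ug}$ is free over its center with the complement having the harmonic decomposition), but the filtration argument is cleanest and self-contained given what we have. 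Either way, the conclusion is that $\varphi$ is a bijection onto the maximal vectors, as claimed.
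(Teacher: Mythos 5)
Your argument is correct, but it takes a genuinely different route from the paper's. You work on the associated graded level: complete reducibility of $\mf{Ug}$ under $\ad\mf{g}$ reduces the lemma to a multiplicity count, which you carry out in $\Gr\mf{Ug}=\Sym\mf{g}$ via the classical separation-of-variables/harmonic decomposition (for $\mf{sl}_2$, $\Sym^d\mf{g}\cong L(2d)\oplus L(2d-4)\oplus\cdots$, with the weight-$2n$ maximal vector in degree $d$ spanned by $(\Delta')^m e^n$, $2m+n=d$), and then you lift to $\mf{Ug}$ by an induction on filtration degree, matching leading symbols. The paper instead argues directly inside $\mf{Ug}$, by induction on the weight of a maximal vector: if $\alpha=ge$ is right-divisible by $e$, then $0=[\alpha,e]=[g,e]e$ forces $[g,e]=0$ because $\mf{Ug}$ is a domain (the ``dividing trick'' \eqref{Etrick}), reducing the weight; if $\alpha$ is not divisible by $e$, a PBW-monomial and weight argument (using that $\mf{Ug}$ has no maximal vectors of negative weight, by Lemma \ref{L0}) pins $\alpha$ down. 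The trade-off: your route is more structural and gives slightly more for free (it exhibits $\mf{Ug}$ as $\mf{Z}(\mf{Ug})$ tensor the harmonics, so the multiplicity of each $L(2n)$ is transparent), at the cost of invoking the $\Sym\mf{g}$ decomposition and an equivariance check for the symbol map; the paper's proof is more elementary and self-contained, and its real purpose is to introduce the dividing trick, which is reused several times later (e.g.\ in Proposition \ref{P3} and in the primitive-ideal section), so the two proofs are not interchangeable in the economy of the paper even though both establish the lemma. One small point of care in your write-up: the linear-independence step phrased as ``distinct bidegree-type'' should be made precise by fixing the top filtration degree $d$ and noting that the symbols $(\Delta')^m e^n$ with $2m+n=d$ have pairwise distinct $h$-weights, hence are independent in $\Sym^d\mf{g}$; with that said explicitly, the argument is complete.
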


\begin{proof}
The injectivity is obvious. Now let $\alpha$ be such a maximal vector. We
may assume without loss of generality, that $\alpha$ is in one weight
space. We proceed by induction on the weight. If $\alpha$ is divisible by
$e$ and $\alpha = ge$ for some $g \in \mf{Ug}$, then we claim that $[g,e]
= 0$ too. For we have
\begin{equation}\label{Etrick}
0 = [\alpha,e] = [ge,e] = [g,e]e \Rightarrow [g,e] = 0
\end{equation}

\noindent since $\mf{Ug}$ is an integral domain. (We will use this {\bf
dividing trick} later in this manuscript.)

Thus, we now assume that $\alpha$ is not divisible by $e$, so if we write
it in the usual PBW basis, it will contain a monomial $a$ containing no
$e$.
Thus $a$ has non-positive weight, and since (by Lemma \ref{L0}, with
$\mf{h}' = V = 0$) $\mf{Ug}$ is a direct sum of finite dimensional
$\mf{g}$-modules (under the adjoint action), it has no maximal vectors of
negative weight. Therefore $a$ has weight 0, and hence is annihilated by
$\mf{g}$ (from the structure theory of finite-dimensional
$\mf{sl}_2$-modules; see \cite{Hu}). Hence, $a$ is a central element.
\end{proof}\hfill
 
\begin{remark}
Using the anti-involution $j$, we can get a similar description of
elements which commute with $f$, as an algebra generated by $f,\Delta$.
\end{remark}

Recall that $\Delta=h^2+4ef-2h$ is the Casimir element. Our next step
will be to compute the commutators of powers of $\Delta$, with $x$ and
$y$. For $n=1$, we have
\begin{align*}
[\Delta,x] & = hx + xh + 4ey - 2x = (2h-3)x + 4ey,\\
[\Delta,y] & = -hy -yh + 4xf + 2y = -(2hy+y) + 4(fx-y) + 2y\\
& = (-2h-3)y + 4fx.
\end{align*}

\noindent (Note that $(\ad x)^3(\Delta) = (\ad y)^3(\Delta) = 0$ in
$H_0$.) We next extract information about these commutators.

\begin{prop}\label{P1}
There exist polynomials $f_n, g_n \in \Z[T] \subset k[T]$ for all $n$,
such that
\[ [\Delta^n, x] = (f_n(\Delta)h + g_n(\Delta))x + 2 f_n(\Delta) ey \]

\noindent and for $y$, we have
\[ [\Delta^n, y] = 2f_n(\Delta)f x + (g_n(\Delta) - f_n(\Delta)h) y. \]

\noindent The polynomials $f_n, g_n$ are inductively defined as follows:
\begin{eqnarray}\label{Eq1}
f_1(T) = 2, \qquad  && f_{n+1}(T) = 2 T^n + (T-1)f_n(T) - 2g_n(T),\\
g_1(T) = -3, \qquad && g_{n+1}(T) =  -3 T^n + (T +3) g_n(T) - 2T f_n(T).
\end{eqnarray}
\end{prop}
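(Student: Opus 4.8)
The plan is to prove the two formulas simultaneously by induction on $n$, using the base case $n=1$ which we have already computed above: indeed $[\Delta, x] = (2h-3)x + 4ey = (f_1(\Delta)h + g_1(\Delta))x + 2f_1(\Delta)ey$ with $f_1 = 2$, $g_1 = -3$, and likewise $[\Delta, y] = (-2h-3)y + 4fx = 2f_1(\Delta)fx + (g_1(\Delta) - f_1(\Delta)h)y$. For the inductive step, I would write $[\Delta^{n+1}, x] = \Delta^n[\Delta, x] + [\Delta^n, x]\Delta$ (the Leibniz rule, valid since $\ad \Delta$ is a derivation), and then expand both terms using the inductive hypothesis. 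The first term gives $\Delta^n((2h-3)x + 4ey)$; the second gives $((f_n(\Delta)h + g_n(\Delta))x + 2f_n(\Delta)ey)\Delta$. The whole computation comes down to moving the trailing $\Delta$ past $x$, $y$, $h$, $e$, $f$ to the left, which we do using precisely the $n=1$ commutator identities $[\Delta, x]$, $[\Delta, y]$ together with $[\Delta, h] = [\Delta, e] = [\Delta, f] = 0$ (as $\Delta$ is central in $\mf{Ug}$).

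Concretely, $x\Delta = \Delta x - [\Delta, x] = \Delta x - (2h-3)x - 4ey$, and similarly for $y\Delta$; also $(ey)\Delta = e(y\Delta) = e(\Delta y - (-2h-3)y - 4fx) = \Delta ey - e(-2h-3)y - 4efx$, and one simplifies $e(-2h-3)y$ and $efx$ in terms of the PBW-style basis $\{hx, ey\}$ (resp.\ $\{fx, hy\}$) by using $[e, h] = -2e$, $[e, y] = x$, $ef = \half(\Delta - h^2 + 2h)$, etc. Collecting the coefficients of $x$, $ey$ (for the $x$-formula) and of $fx$, $hy$ (for the $y$-formula), one reads off the recursions for $f_{n+1}, g_{n+1}$. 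The $y$-formula can alternatively be deduced from the $x$-formula by applying the anti-involution $j$ of Lemma \ref{L1}: since $j(\Delta) = \Delta$, $j(x) = y$, $j(e) = -f$, $j(h) = h$, applying $-j$ to the $x$-identity and using $j(ab) = j(b)j(a)$ turns $[\Delta^n, x]$ into $[\Delta^n, y]$ and converts the right-hand side correctly, which explains the sign pattern ($f_n$ appears with the same sign, $h$ with the opposite sign) and saves doing a second induction.

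The only real point requiring care is the bookkeeping: one must be consistent about which monomial basis of the relevant weight spaces one expands into, so that the coefficients $f_{n+1}, g_{n+1}$ are unambiguously determined and turn out to lie in $\Z[T]$ (which is clear once the recursions \eqref{Eq1} are established, since they have integer coefficients and integer initial data). There is a small subtlety worth flagging: the formulas hold in $H_z$ for \emph{every} central $z$, not just in $H_0$, because $\Delta$ and the bracket $[\Delta, x]$ only involve the Lie-algebra relations and never the deformed relation $[x,y] = z$; thus the computation is really taking place inside $\mf{U}(\mf{sl}_2) \ltimes TV$, and there is nothing $z$-dependent to track. I expect the main obstacle to be purely organizational --- keeping the passage of $\Delta$ past $e$ and $f$ straight while normalizing $ef$ against $\Delta$ --- rather than conceptual; there is no step where the induction could genuinely fail.
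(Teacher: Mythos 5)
Your proof is correct, and its computational core --- the Leibniz expansion $[\Delta^{n+1},x]=\Delta^n[\Delta,x]+[\Delta^n,x]\Delta$, pushing the trailing $\Delta$ to the left using the $n=1$ commutators and normalizing $ef$ against the Casimir via $4ef=\Delta-h^2+2h$ --- is exactly the computation the paper performs. The organizational difference is that the paper first establishes, prior to any induction, that $[\Delta^n,x]$ must have the shape $(f_n(\Delta)h+g_n(\Delta))x+2f_n(\Delta)ey$: it observes that $[\Delta^n,x]$ is an $\ad e$-maximal vector of weight $1$, so its $y$-coefficient is a weight-$2$ maximal vector of $\mf{Ug}$ and hence central times $e$ by Lemma \ref{L2}, after which the Leibniz computation only has to produce the recursion. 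You instead carry the shape through the induction itself, which is equally valid and slightly more self-contained (Lemma \ref{L2} is not needed), at the cost of having to collect the expansion into the basis $\{hx,\,x,\,ey\}$ (resp.\ $\{fx,\,y,\,hy\}$) by hand, which you acknowledge. For the $y$-formula the paper simply runs the directly analogous induction, which is also your primary route; your proposed shortcut via $j$ is workable but not quite as immediate as stated, because $j$ reverses products and therefore leaves the $\mf{Ug}$-coefficients on the \emph{right} of $x,y$ --- e.g.\ $j(2f_n(\Delta)ey)=-2xf\,f_n(\Delta)$ and $j(f_n(\Delta)hx)=yhf_n(\Delta)$ --- and moving them back to the left produces commutator corrections such as $[f_n(\Delta),y]$, $yh-hy=y$ and $xf-fx=-y$ whose cancellation must be checked (for $n=1$ one verifies $2yh-3y-4xf=2hy+3y-4fx$, matching $-[\Delta,y]$); so the parallel direct induction is the cleaner way to finish, and the $j$-argument should be regarded as requiring that extra verification rather than pure sign bookkeeping. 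Your remark that the whole computation lives in $\mf{U}(\mf{sl}_2)\ltimes TV$ and is independent of $z$ is correct and consistent with the paper.
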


\begin{proof}
We show the various assertions made above.
\begin{enumerate}
\item Note for any $g \in \mf{Ug}$ that $[g, x]$ is, first, a
$\mf{Ug}$-linear combination of $x$ and $y$ only. Next, $[\Delta^n, x]$
is also a maximal vector for $\mf{g}$, of weight $1$. Thus, if it equals
$\alpha x + \beta y$, then $\alpha$ has weight $0$ and $\beta$ has weight
$2$. We now write $[e,[\Delta^n, x]] = 0$ to get
\[ 0 = [e, \alpha x + \beta y] = ([e,\alpha] + \beta)x + [e,\beta]y.\]

\noindent By the PBW theorem, the coefficients of $x,y$ therefore vanish.
Thus, $\beta \in \mf{Ug}$ is maximal of weight $2$, hence is a central
element times $e$ (by Lemma \ref{L2}).

Suppose we write $\beta = 2 f_n(\Delta) e$ for some polynomial $f_n$ in
$\Delta$. Then we get $[e,\alpha] + 2 f_n(\Delta) e = 0$, whence we get
that $\ad e(\alpha) = -2 f_n(\Delta) e$.

Since $[e,f_n(\Delta) h] = f_n(\Delta) \cdot (-2e)$, hence we see that
$\alpha - f_n(\Delta)h$ is killed by $e$. Moreover, it is a weight vector
of weight $0$, so it equals $g_n(\Delta)$ for some polynomial $g_n$.

Finally, the given initial values of $f_1, g_1$ do indeed satisfy the
commutation relations that we verified above.

\begin{remark}
We will sometimes omit $\Delta$ from $f_n(\Delta)$, but this should not
cause any confusion.
\end{remark}

\item We now compute the polynomials $f_n, g_n$ inductively. We have
\begin{eqnarray*}
[\Delta^{n+1},x] & = & \Delta^n[\Delta,x] + [\Delta^n,x]\Delta\\
& = & \Delta^n(2h-3)x + \Delta^n 4ey\\
&& + (f_n(\Delta)h + g_n(\Delta)) x \Delta + 2f_n(\Delta) ey \Delta\\
& = & (\Delta^n(2h-3) + f_n(\Delta)\Delta h + g_n(\Delta)\Delta)x\\
&& + (2f_n(\Delta)\Delta e + \Delta^n4e)y\\
&& - (f_n(\Delta)h + g_n(\Delta))(2h-3)x - 2f_n(\Delta)e4fx\\
&& - (f_n(\Delta)h + g_n(\Delta))4ey - 2f_n(\Delta)e(-2h-3)y.
\end{eqnarray*}

\noindent Grouping all elements containing $y$, we get the coefficient of
$y$ to be
\begin{align*} 
& 4\Delta^n e + 2f_n(\Delta)\Delta e - 4(f_n(\Delta) h + g_n(\Delta))e +
4 f_n(\Delta) eh + 6 f_n(\Delta)e\\
= & 2(2\Delta^n - 2g_n(\Delta)) e + 2 f_n(\Delta)(\Delta e - 2he + 2eh
+ 3e)\\
= & 2(2\Delta^n + f_n(\Delta)(\Delta - 1) - 2g_n(\Delta)) e,
\end{align*} 

\noindent whence we get that the coefficient of $y$ is
\[ 2f_{n+1}(\Delta)e = 2(2\Delta^n + f_n(\Delta)(\Delta - 1) -
2g_n(\Delta)) e. \]

\noindent This proves the relation for $f_{n+1}$. Similarly, grouping all
elements containing $x$, we get the coefficient of $x$ to be
\begin{eqnarray*}
&& \Delta^n(2h-3) + f_n(\Delta)\Delta h + g_n(\Delta)\Delta -
g_n(\Delta)2h\\
& + & 3g_n(\Delta) - 2f_n(\Delta) h^2 + 3 f_n(\Delta) h - 8 f_n(\Delta)
ef.
\end{eqnarray*}

\noindent Note that the sum of the last three terms is $- f_n(\Delta) h -
2f_n(\Delta)\Delta$. Hence we get that the coefficient is
\begin{eqnarray*}
&& f_{n+1}(\Delta) h + g_{n+1}(\Delta)\\
& = & \Delta^n (2h-3) + f_n(\Delta)(\Delta h - h - 2 \Delta) +
g_n(\Delta)(\Delta - 2h +3).
\end{eqnarray*}

Subtracting $f_{n+1}(\Delta) h$ from both sides (and using the formula
above), we conclude that
\begin{align*} 
g_{n+1}(\Delta) = & \Delta^n (2h-3) + f_n(\Delta)(\Delta h - h - 2
\Delta) + g_n(\Delta)(\Delta - 2h +3)\\
& - \Delta^n 2h - f_n(\Delta)(\Delta h - h) +
2 g_n(\Delta) h\\
= & -3\Delta^n - 2f_n(\Delta)\Delta + g_n(\Delta)(\Delta + 3).
\end{align*}

\noindent Thus, we have shown the inductive formulae.\medskip

\item Computations with $y$ are directly analogous to the ones above.
%
%
%
%
%
%
\end{enumerate}
\end{proof}

As a corollary of these calculations, we have

\begin{cor}\label{C1}\hfill
\begin{enumerate}
\item $f_n$ and $g_n$ are polynomials of degree $n-1$, with top
coefficients $2n$ and $-n(2n+1)$ respectively.

\item The $f_n$'s (or $g_n$'s) form a basis of $\mf{Z}(\mf{Ug})$.

\item The only elements from $\mf{Z}(\mf{Ug})$ that commute with $x$ or
$y$ are scalars.
\end{enumerate}
\end{cor}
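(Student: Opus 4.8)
All three assertions are quick consequences of Proposition \ref{P1}, and I would prove them in the stated order.

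\emph{Part (1).} I would induct on $n$, the case $n=1$ being the values $f_1=2$, $g_1=-3$ (each of degree $0$, with the claimed top coefficients $2\cdot 1$ and $-1\cdot(2\cdot 1+1)$). For the inductive step, assume $\deg f_n=\deg g_n=n-1$ with top coefficients $2n$ and $-n(2n+1)$. In $f_{n+1}=2T^n+(T-1)f_n-2g_n$, the degree-$n$ part comes only from $2T^n$ and from $(T-1)f_n$ (whose leading term is $2nT^{n}$), while $-2g_n$ has degree $n-1$; hence $\deg f_{n+1}=n$ with leading coefficient $2+2n=2(n+1)$, which is nonzero since $\mathrm{char}\,k=0$. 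Similarly, in $g_{n+1}=-3T^n+(T+3)g_n-2Tf_n$ the degree-$n$ coefficient is $-3-n(2n+1)-4n=-(n+1)(2n+3)=-(n+1)\bigl(2(n+1)+1\bigr)$, again nonzero. This closes the induction.

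\emph{Part (2).} Recall $\mf{Z}(\mf{Ug})=k[\Delta]$, with vector-space basis $\{\Delta^m:m\ge 0\}$. By part (1), each $f_n(\Delta)$ lies in the span of $\Delta^0,\dots,\Delta^{n-1}$ with nonzero $\Delta^{n-1}$-coefficient; so the family $\{f_n(\Delta)\}_{n\ge1}$ is triangular with respect to the monomial basis, with nonzero diagonal entries, and is therefore itself a basis of $\mf{Z}(\mf{Ug})$. The identical argument applies to $\{g_n(\Delta)\}_{n\ge1}$.

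\emph{Part (3).} By the anti-involution $j$ of Lemma \ref{L1}, which fixes $\Delta$ and interchanges $x$ and $y$, an element of $\mf{Z}(\mf{Ug})$ commutes with $x$ if and only if it commutes with $y$, so it suffices to treat $x$. Let $p\in\mf{Z}(\mf{Ug})=k[\Delta]$, say $p=\sum_{n\ge0}a_n\Delta^n$ (a finite sum), and put $F:=\sum_{n\ge1}a_nf_n(\Delta)$, $G:=\sum_{n\ge1}a_ng_n(\Delta)$. Since $[-,x]$ is $k$-linear and annihilates $a_0$, Proposition \ref{P1} gives $[p,x]=(Fh+G)x+2Fey$. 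By the PBW property, $\mf{Ug}\,x$ and $\mf{Ug}\,y$ are independent free summands of $H_z$ (equivalently of $\mf{Ug}\ltimes TV$), so $[p,x]=0$ forces the coefficient of $y$ to vanish: $2Fe=0$ in $\mf{Ug}$. As $\mf{Ug}$ is a domain and $e\neq 0$, this gives $F=\sum_{n\ge1}a_nf_n(\Delta)=0$; by part (2) the $f_n(\Delta)$ are linearly independent, hence $a_n=0$ for all $n\ge1$ and $p=a_0\in k$. None of this presents a genuine obstacle; the only point needing care is the degree bookkeeping in part (1)—checking that the top-degree terms in the recursions do not cancel, which is precisely where $\mathrm{char}\,k=0$ enters.
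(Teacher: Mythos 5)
Your proof is correct and follows essentially the same route as the paper: the same induction on the recursions of Proposition \ref{P1} for the degrees and leading coefficients, the same triangular change-of-basis observation for part (2), and the same use of the anti-involution $j$ plus the PBW/domain argument to kill the coefficient of $y$ in part (3). The only difference is cosmetic: you spell out the PBW freeness and the cancellation of $e$ slightly more explicitly than the paper does.
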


\begin{proof}\hfill
\begin{enumerate}
\item (At first, recall that $\mf{Z}(\mf{Ug})$ is generated by $\Delta$;
see \cite{Hu}.)
All these facts are proved simultaneously by induction on $n$; they
clearly hold for $n=1$. Suppose they now hold for $n$. The inductive
definitions then show that $f_{n+1}$ has leading term arising from $2T^n
+ T \cdot (2n T^{n-1} + \cdots)$. Hence $f_{n+1} = 2(n+1) T^n + \cdots$.

Similarly, the top coefficient of $g_{n+1}$ is the coefficient of $T^n$
(unless it vanishes), and this equals
\[ -3 -n(2n+1) - 2 \cdot 2n = -(3 + 2n^2 + n + 4n) = -(n+1)(2(n+1)+1) \]
as claimed. Hence we are done by induction.

\item This is because both denote a unipotent change of basis from the
usual $\{ 1, T, T^2, \dots \}$, and the map sending $T$ to $\Delta$ is an
isomorphism : $k[T] \to \mf{Z}(\mf{Ug})$.

\item Note that an element from $\mf{Z}(\mf{Ug})$ commutes with $x$ if
and only if it commutes with $y$ (applying the anti-involution $j$ and
noting that $j$ fixes $\Delta$). Thus, we need to show that if
$\sum_{i>0} a_i\Delta^i$ commutes with $x$, it must be 0. But we have
\[ \sum_i a_i [\Delta^i, x] = \left( \sum_i a_i (f_i(\Delta) h +
g_i(\Delta)) \right) \cdot x + 2 \left( \sum_i a_i f_i(\Delta) \right) ey.
\]

\noindent Both coefficients (i.e., of $x$ and $y$) must therefore be
zero. Since the associated graded of $H_z$ is an integral domain, hence
$\sum_i a_i f_i(\Delta) = 0$; since the $f_i$'s form a basis of the
center, we get each $a_i$ to be zero, and we are done.
\end{enumerate}
\end{proof}

We have the following proposition, which will be used later.

\begin{prop}\label{P2}
Suppose $\psi, \eta, \alpha, \beta$ are central in $\mf{Ug}$. Then the
following are equivalent:
\begin{enumerate}
\item $2 \psi ey + (h \psi + \eta)x = [\alpha, x] + \beta x$.
\item $2 \psi fx + (\eta - h \psi)y = [\alpha, y] + \beta y$.
\item $\psi = \sum_{i>0} a_i f_i(\Delta)$, $\alpha = \sum_{i \geqslant 0}
a_i \Delta^i$,  $\beta = \eta - \sum_i a_i g_i(\Delta)$ for some scalars
$a_i\in k$.
\end{enumerate}
\end{prop}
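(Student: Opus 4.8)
The plan is to prove the chain of equivalences by showing $(1)\Leftrightarrow(3)$ and $(2)\Leftrightarrow(3)$; in fact $(1)\Leftrightarrow(2)$ is immediate by applying the anti-involution $j$ (which fixes $\Delta$, hence $\psi,\eta,\alpha,\beta$, and swaps $x\leftrightarrow y$, $e\leftrightarrow -f$), so it suffices to establish $(1)\Leftrightarrow(3)$. For the direction $(3)\Rightarrow(1)$, I would simply substitute: given $\alpha=\sum_{i\ge 0}a_i\Delta^i$, Proposition \ref{P1} gives
\[
[\alpha,x]=\sum_i a_i[\Delta^i,x]=\Bigl(\sum_i a_i(f_i(\Delta)h+g_i(\Delta))\Bigr)x+2\Bigl(\sum_i a_i f_i(\Delta)\Bigr)ey,
\]
and then with $\psi=\sum_{i>0}a_i f_i(\Delta)$ (note $f_0=0$, or rather the $i=0$ term contributes nothing since $[\Delta^0,x]=0$) and $\beta=\eta-\sum_i a_i g_i(\Delta)$ one checks directly that $[\alpha,x]+\beta x$ collapses to $2\psi ey+(h\psi+\eta)x$. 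This is a routine term-matching verification.

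The substance is in $(1)\Rightarrow(3)$. Suppose $\psi,\eta,\alpha,\beta$ central in $\mf{Ug}$ satisfy (1). Since the $f_i(\Delta)$ (for $i\ge 1$) together with $1=f_0$... more precisely, since $\{1\}\cup\{f_i(\Delta):i\ge1\}$ is a basis of $\mf{Z}(\mf{Ug})=k[\Delta]$ by Corollary \ref{C1}(2), I can write $\psi=\sum_{i>0}a_i f_i(\Delta)$ for unique scalars $a_i$ (the constant term of $\psi$ is forced to be $0$ here because $\psi$ must be of this form — I will need to argue that, see below). Set $\alpha':=\sum_{i\ge0}a_i\Delta^i$ (choosing $a_0$ arbitrarily, say $a_0=0$). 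By the already-established $(3)\Rightarrow(1)$, the tuple $(\psi,\,\eta,\,\alpha',\,\eta-\sum_i a_i g_i(\Delta))$ satisfies (1). Subtracting the two instances of (1), I get
\[
0=[\alpha-\alpha',x]+\bigl(\beta-\eta+\textstyle\sum_i a_i g_i(\Delta)\bigr)x.
\]
Now $\alpha-\alpha'$ is central in $\mf{Ug}$, so by Proposition \ref{P1} its commutator with $x$ again has the form $(\text{central}\cdot h+\text{central})x+2(\text{central})ey$; the $ey$-coefficient is $2\sum_{i}b_i f_i(\Delta)$ where $\alpha-\alpha'=\sum b_i\Delta^i$. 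Since the displayed equation has no $ey$-term and the PBW basis makes the $x$- and $ey$-coefficients independent, we conclude $\sum_i b_i f_i(\Delta)=0$, hence all $b_i=0$ by linear independence (Corollary \ref{C1}(2)), i.e. $\alpha=\alpha'$ — wait, this only gives $b_i=0$ for $i\ge1$; but $b_0=a_0-(\text{const.\ term of }\alpha)$ is free anyway since $\Delta^0$ commutes with everything, so $\alpha$ equals $\alpha'$ up to an irrelevant scalar, which we absorb. Then the remaining $x$-coefficient must vanish: $\beta-\eta+\sum_i a_i g_i(\Delta)=0$, giving the formula for $\beta$ in (3).

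The one genuinely delicate point — and the main obstacle — is justifying that the constant term of $\psi$ vanishes, i.e. that $\psi$ really lies in the span of the $f_i(\Delta)$ with $i\ge1$ rather than needing an $f_0=1$ contribution. This is where Corollary \ref{C1}(3) enters: if $\psi$ had a nonzero scalar part $c$, then after subtracting the $i\ge1$ part we would be left needing $2c\,ey$ plus a central multiple of $x$ to equal $[\alpha'',x]$ for some central $\alpha''$, and matching $ey$-coefficients forces $c=\sum(\cdots)f_i(\Delta)$ with the left side a nonzero constant and the right side in the span of $f_i$'s of degree $\ge0$ — one uses that a nonzero constant cannot be written using only the $f_i$ with $i\ge1$ together with the constraint coming from the $x$-coefficient; concretely, if the $ey$-coefficient equation reads (constant)$=f$-combination with no $f_1$-freedom left, we reach a contradiction with Corollary \ref{C1}(1),(2). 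I would handle this by working throughout with the basis $\{f_i(\Delta)\}_{i\ge1}$ of the augmentation ideal and noting that equation (1), read off in the PBW basis, forces the $ey$-coefficient $2\psi$ to be $2\sum_{i}a_if_i(\Delta)$ with the \emph{same} $a_i$ governing $[\alpha,x]$ via Proposition \ref{P1}, so that $\psi$ automatically has no constant term; the rest of the argument is then the clean subtraction above.
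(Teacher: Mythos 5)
Your core argument for the equivalence of (1) and (3) is correct and is essentially the paper's own proof: prove (3)$\Rightarrow$(1) by substituting the formula of Proposition \ref{P1}, then, for (1)$\Rightarrow$(3), subtract a known solution and read off the $ey$-coefficient, using that the $f_i(\Delta)$'s are linearly independent and that the algebra is a domain (the paper phrases the subtraction step as ``additivity'' of the solution in $(\psi,\eta)$, reducing to showing $[\alpha,x]+\beta x=0$ forces $\alpha$ scalar and $\beta=0$). However, your treatment of statement (2) has a genuine gap: the claim that (1)$\Leftrightarrow$(2) is ``immediate'' from $j$ is not justified, because $j$ is an \emph{anti}-involution. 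Applying $j$ to (1) literally yields $-2xf\psi + yh\psi + y\eta = -[\alpha,y] + y\beta$, with every central factor on the wrong side of $x$ and $y$; turning this into (2) requires commuting $\psi,\eta,\beta$ past $x,y$, i.e.\ exactly the nontrivial commutators $[\mf{Z}(\mf{Ug}),V]$ that the proposition is about (compare the proof of Proposition \ref{Pzz}, where applying $j$ produces $x\beta$ rather than $\beta x$ and a separate uniqueness argument is needed to kill $\beta$). Since (2) enters your proof only through this shortcut, its equivalence is not actually established. The repair is easy and is what the paper does: treat (2) symmetrically, using the $[\Delta^n,y]$ formula of Proposition \ref{P1} for (3)$\Rightarrow$(2) and repeating the subtraction/uniqueness argument for the $y$-equation.

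Second, the point you single out as ``genuinely delicate'' --- that the constant term of $\psi$ must vanish --- is a misreading of Corollary \ref{C1}. By parts (1)--(2) of that corollary, $f_i$ has degree $i-1$ (so $f_1=2$ is a nonzero constant) and $\{f_i(\Delta)\}_{i\ge 1}$ is already a basis of all of $\mf{Z}(\mf{Ug})=k[\Delta]$, not of an augmentation ideal; in particular $\{1\}\cup\{f_i(\Delta)\}_{i\ge1}$ is linearly dependent, and every central $\psi$ is uniquely of the form $\sum_{i>0}a_i f_i(\Delta)$ with no condition whatsoever (statement (3) imposes no restriction on $\psi$). So your final paragraph addresses a non-existent obstacle and its sketched contradiction argument should simply be deleted. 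Once that paragraph is removed and the $j$-shortcut is replaced by the symmetric argument for (2), your proof coincides with the paper's.
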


\noindent Thus, either of the first two equations has a unique solution
in $\alpha, \beta$ (modulo the constant term in $\alpha$).

\begin{proof}
We first prove that the last statement implies the first two. Given
$\psi, \eta$ and $\alpha, \beta$ as in the last part, we compute:
\begin{align*}
[\alpha, x] + \beta x = & \sum_i [a_i (f_i(\Delta)h + g_i(\Delta))x + 2
a_i f_i(\Delta) ey] + \eta x - \sum_i a_i g_i(\Delta) x\\
= & \sum_i a_i f_i(\Delta) \cdot hx + 2 \sum_i a_i f_i(\Delta) \cdot ey
+ \eta x\\
= & 2 \psi ey + (h \psi + \eta)x.
\end{align*}

Similarly, 
\begin{align*}
[\alpha, y] + \beta y = & \sum_i [a_i (g_i(\Delta) - f_i(\Delta)h)y + 2
a_i f_i(\Delta) fx] + \eta y - \sum_i a_i g_i(\Delta) x\\
= & -\sum_i a_i f_i(\Delta) \cdot hy + 2 \sum_i a_i f_i(\Delta) \cdot
fx + \eta y\\
= & 2 \psi fx + (\eta - h \psi)y.
\end{align*}

To prove that the first two parts imply the last, we first note that the
solution set $\alpha, \beta$ is ``additive" in the variables $\psi,
\eta$. Therefore it suffices to show that if $[\alpha, x] + \beta x = 0$
or $[\alpha, y] + \beta y = 0$, then $\alpha = \beta = 0$.

So suppose $\alpha = \sum_{i \geqslant 0} a_i f_i(\Delta)$. Computing the
above expressions, we have
\[ [\alpha, x] + \beta x = \sum_{i>0} [a_i (f_i(\Delta)h + g_i(\Delta))x
+ 2 a_i f_i(\Delta) ey] + \beta x.\]

\noindent Equating the coefficient of $y$ to zero, since the
$f_i(\Delta)$'s form a basis of the center, and since $H_z$ is an
integral domain, we get that $a_i = 0\ \forall i$, so $\alpha = a_0 \in
k$. But then we are left with $\beta x = 0$, whence $\beta = 0$ too.

A similar proof is for the other equation, using the computations:
\[ [\alpha, y] + \beta y = \sum_{i>0} [a_i (g_i(\Delta) - f_i(\Delta)h)y
+ 2 a_i f_i(\Delta) fx] + \eta y - \sum_{i>0} a_i g_i(\Delta) \cdot x. \]
\end{proof}

\begin{prop}\label{Pfg}
The polynomials $f_n, g_n$ satisfy the recursive relations
\begin{eqnarray*}
f_1(T) & = & 2, \qquad f_2(T) = 4(T+1),\\
f_{n+2}(T) & = & (2T+2)f_{n+1}(T) - (T^2 - 2T-3) f_n(T),\\
g_1(T) & = & -3, \qquad g_2(T) = -10T-9,\\
g_{n+2}(T) & = & (2T+2) g_{n+1}(T) - (4T^{n+1} + 3T^n) - (T^2 - 2T-3)
g_n(T).
\end{eqnarray*}
\end{prop}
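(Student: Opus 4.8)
The plan is to derive the second-order recursions directly from the first-order ones in \eqref{Eq1}, using nothing beyond elementary algebra in $\Z[T]$. First I would dispose of the base cases: substituting $n=1$ into \eqref{Eq1} gives $f_2 = 2T + (T-1)\cdot 2 - 2\cdot(-3) = 4(T+1)$ and $g_2 = -3T + (T+3)(-3) - 2T\cdot 2 = -10T-9$, as asserted.

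The key observation is that the first relation in \eqref{Eq1} expresses $g_n$ entirely through the $f$'s: rearranging $f_{n+1} = 2T^n + (T-1)f_n - 2g_n$ gives
\[ 2 g_n = 2T^n + (T-1) f_n - f_{n+1} \qquad (\star) \]
for all $n \geq 1$. To obtain the recursion for $f$, I would substitute $(\star)$, at index $n$ and at index $n+1$, into the second relation of \eqref{Eq1}, namely $g_{n+1} = -3T^n + (T+3)g_n - 2Tf_n$. This eliminates every occurrence of $g$; after cancelling the common $T^{n+1}$ and collecting terms, using $(T+3)(T-1)-4T = T^2-2T-3$, the identity collapses exactly to $f_{n+2} = (2T+2)f_{n+1} - (T^2-2T-3)f_n$.

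For the recursion for $g$, I would apply the "second-difference" operator $S_n \mapsto S_{n+2} - (2T+2)S_{n+1} + (T^2-2T-3)S_n$ to both sides of $(\star)$. On the right-hand side the $f$-contribution vanishes identically, precisely because $f$ satisfies its own homogeneous second-order recursion at levels $n$ and $n+1$; only the power-of-$T$ term survives, contributing $2T^n\bigl(T^2 - (2T+2)T + (T^2-2T-3)\bigr) = -2T^n(4T+3)$. Dividing by $2$ then yields $g_{n+2} - (2T+2)g_{n+1} + (T^2-2T-3)g_n = -(4T^{n+1}+3T^n)$, which is the claimed recursion. The only place requiring any care is the bookkeeping in these substitutions — in particular the cancellation $T^2-(2T+2)T+(T^2-2T-3) = -(4T+3)$ producing the inhomogeneous term — but there is no real obstacle: everything is a finite computation over $\Z[T]$ needing only \eqref{Eq1} (and Proposition \ref{P1} to know we are manipulating genuine polynomials).
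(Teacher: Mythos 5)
Your proposal is correct and follows essentially the same route as the paper: both obtain the decoupled second-order recursions by pure algebraic elimination from the coupled first-order system \eqref{Eq1}, with the same base-case computations. The only (minor) difference is that for the $g$-recursion you reuse the just-proved $f$-recursion by applying the second-difference operator to $2g_n = 2T^n + (T-1)f_n - f_{n+1}$, whereas the paper performs a second independent elimination (multiplying the relations by $2T$ and $T-1$); your bookkeeping, including the inhomogeneous term $-2T^n(4T+3)$, checks out.
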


\begin{proof}
The initial values of $f_1, f_2, g_1, g_2$ can be computed easily using
Proposition \ref{P1} above. We now compute the expressions for $f_n,
g_n$.

Multiplying the equation in Proposition \ref{P1} for $f_n$ by $(T+3)$,
and that for $g_n$ by 2, and adding these up, the coefficients of $g_n$
on the right cancel each other. Hence we get
\begin{eqnarray*}
&& (T+3) f_{n+1}(T) + 2g_{n+1}(T)\\
& = & 2T^{n+1} + 6 T^n + (T-1)(T+3)f_n(T) - 6T^n - 4T f_n(T)\\
& = & 2T^{n+1} + f_n(T)(T^2 - 2T-3).
\end{eqnarray*}

But equation \eqref{Eq1} for $f_n$ also gives us an expression for $2
g_n(T)$ in terms of the $f_n$'s. Hence
\[ 2 g_{n+1}(T) = 2 T^{n+1} - f_{n+2}(T) + (T-1) f_{n+1}(T). \]

Replacing this in the previous equation, we get
\begin{eqnarray*}
&& (T+3) f_{n+1}(T) + 2 T^{n+1} - f_{n+2}(T) + (T-1) f_{n+1}(T)\\
& = & 2T^{n+1} + f_n(T)(T^2 - 2T-3),
\end{eqnarray*}

\noindent from which the relevant equation follows.

We now show the analogous result for $g_n$. Multiply the equation in
Proposition \ref{P1} for $f_n$ by $2T$, and that for $g_n$ by $(T-1)$. If
we now add the two, the coefficients for $f_n$ cancel each other, and we
get
\begin{eqnarray*}
&& 2T f_{n+1}(T) + (T-1) g_{n+1}(T)\\
& = & 4 T^{n+1} - 4T g_n(T) - 3T^{n+1} + 3T^n + (T+3)(T-1) g_n(T)\\
& = & (T^{n+1} + 3 T^n) + g_n(T)(T^2 - 2T - 3).
\end{eqnarray*}

Once again, equation \eqref{Eq1} for $g_n$ also gives us an expression
for $2T f_{n+1}(T)$ (after a change of variables), namely,
\[ 2T f_{n+1}(T) = -3T^{n+1} + (T+3) g_{n+1}(T) - g_{n+2}(T). \]

\noindent Substituting in the previous equation, and rearranging terms,
we obtain
\begin{eqnarray*}
g_{n+2}(T) & = & -3 T^{n+1} - (T^{n+1} + 3 T^n) +
g_{n+1}(T)((T-1)+(T+3))\\
&& - g_n(T)(T^2-2T-3)\\
& = & -(4T^{n+1} + 3 T^n) + (2T+2)g_{n+1}(T) - g_n(T)(T^2-2T-3).
\end{eqnarray*}
\end{proof}

We end this subsection by explicitly computing $f_n$ and $g_n$, though we
will not use this anywhere else in the paper.

\begin{lemma}
For all $n \geq 0$, we have
\begin{eqnarray*}
f_n(T) & = & \half (T+1)^{\frac{n-1}{2}} [x_+^n - x_-^n],\\
g_n(T) & = & T^n - \half (T+1)^{\frac{n-1}{2}} \left[ (\sqrt{T+1} + 1)
y_+^n + (\sqrt{T+1} - 1) y_-^n \right],
\end{eqnarray*}
where $x_\pm := \sqrt{T+1} \pm 1$, and $y_\pm := \sqrt{T+1} \pm 2$.
\end{lemma}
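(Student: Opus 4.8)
The plan is to recognize the two families $(f_n)_{n\ge 0}$ and $(g_n)_{n\ge 0}$ as solutions of the explicit linear recurrences recorded in Proposition \ref{Pfg}, and to solve these by the classical method of characteristic roots. All the algebra will take place in the quadratic field extension $R := k(T)(s)$ of $k(T)$, where $s := \sqrt{T+1}$; since $k[T] \subset R$, it is enough to produce the closed forms inside $R$ and then observe that they in fact lie in $k[T]$.

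First I would treat the homogeneous recurrence $u_{n+2} = (2T+2)\,u_{n+1} - (T^2-2T-3)\,u_n$, which governs $(f_n)$ and is the homogeneous part of the recurrence for $(g_n)$. Its characteristic polynomial $\lambda^2 - (2T+2)\lambda + (T^2-2T-3)$ has discriminant $(2T+2)^2 - 4(T^2-2T-3) = 16(T+1) = (4s)^2$, so its two roots are
\[ \lambda_\pm = (T+1) \pm 2s = s\,(s \pm 2) = s\, y_\pm, \]
and they are distinct in $R$ since $T+1 = s^2 \ne 0$. Consequently every solution of the homogeneous recurrence over $R$ has the form $u_n = A\lambda_+^n + B\lambda_-^n$. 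For $(f_n)$ one imposes the initial values $f_1 = 2$, $f_2 = 4(T+1)$ from Proposition \ref{Pfg} and solves the resulting $2\times 2$ linear system for $A,B$; this gives the clean answer $f_n = 2(\lambda_+^n - \lambda_-^n)/(\lambda_+ - \lambda_-)$, and substituting $\lambda_\pm = s\,y_\pm$ together with $\lambda_+ - \lambda_- = 4s$ turns this into the claimed closed form $f_n = \tfrac12 (T+1)^{(n-1)/2}(y_+^n - y_-^n)$.

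For $(g_n)$ the recurrence is inhomogeneous, with forcing term $-(4T^{n+1}+3T^n) = -(4T+3)T^n$. As this is a fixed ($n$-independent) multiple of $T^n$ and $T$ is not among the roots $\lambda_\pm$, I would seek a particular solution of the form $c\,T^n$; plugging in forces $c=1$, so that $T^n$ is itself a particular solution. Hence $g_n - T^n$ satisfies the homogeneous recurrence, now with initial data $g_1 - T = -(T+3)$ and $g_2 - T^2 = -(T^2+10T+9)$, and solving exactly as before one finds, in compact form, $g_n = T^n - f_n - \tfrac12(\lambda_+^n + \lambda_-^n)$; expanding $\lambda_\pm = s\,y_\pm$ and regrouping the combinations $y_+^n \pm y_-^n$ yields the displayed closed form for $g_n$.

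It remains to dispatch two routine points. First, one must check that the expressions obtained in $R$ genuinely lie in $k[T]$: a parity count on powers of $s$ does this, using that $y_+^n + y_-^n$ has, in $k[s]$, the same parity as $n$ while $y_+^n - y_-^n$ has the opposite parity, so that in $f_n$ and in $g_n - T^n$ all odd powers of $s$ cancel; the case $n=0$ is consistent because the spurious factor $s^{-1}$ is killed by $y_+^0 - y_-^0 = 0$. Second, one should confirm directly that the formulas reproduce $f_1, f_2, g_1, g_2$, which is immediate from the displayed root data. I do not expect a genuine obstacle anywhere here; the only step calling for a little care is precisely this final bookkeeping — matching the symmetric combinations $A\lambda_+^n + B\lambda_-^n$ to the stated forms and verifying polynomiality. (Alternatively, one may bypass the derivation entirely and simply verify by induction on $n$ that the claimed formulas satisfy the recurrences of Proposition \ref{Pfg} and the initial conditions, using only that $s\,y_\pm$ are the roots of $\lambda^2 - (2T+2)\lambda + (T^2-2T-3)$.)
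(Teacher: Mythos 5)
Your argument is sound and proceeds by a genuinely different (and more informative) route than the paper. The paper does not solve the recurrences at all: it simply verifies by induction that the displayed formulas satisfy the second-order relations of Proposition \ref{Pfg}, treating the $g_n$'s via the substitution $h_n = (g_n(T)-T^n)/T^{n-2}$, which converts the inhomogeneous relation into one with rational-function coefficients. You instead derive the closed forms: pass to $k(T)(\sqrt{T+1})$, compute the characteristic roots $\lambda_\pm = \sqrt{T+1}\,(\sqrt{T+1}\pm 2)$, fit the initial data $f_1, f_2$ and $g_1 - T,\ g_2 - T^2$ (after noting that $T^n$ is a particular solution, legitimate since $T$ is not a characteristic root), and finish with a parity count in $\sqrt{T+1}$ to see the answers lie in $k[T]$. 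This yields a derivation rather than a verification, at the modest cost of working in a quadratic extension; both arguments rest on the same input, namely Proposition \ref{Pfg} and the initial values.

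One caveat must be made explicit. What you derive for $f_n$ is $\half (T+1)^{(n-1)/2}\,(y_+^n - y_-^n)$ with $y_\pm = \sqrt{T+1}\pm 2$, and you call this ``the claimed closed form''; but the lemma as printed asserts $f_n = \half (T+1)^{(n-1)/2}\,(x_+^n - x_-^n)$ with $x_\pm = \sqrt{T+1}\pm 1$. These are not the same, and it is the printed version that is inconsistent: with $x_\pm$ one gets $f_1 = 1$ instead of $2$, $f_2 = 2(T+1)$ instead of $4(T+1)$, and leading coefficient $n$ instead of the $2n$ required by Corollary \ref{C1}. Your formula is the correct one (e.g.\ it gives $f_3 = 6T^2+20T+14$, in agreement with the recurrence), and the printed $g_n$ formula, which already uses $y_\pm$, matches your compact form $g_n = T^n - f_n - \half(\lambda_+^n + \lambda_-^n)$. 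So your proof is correct, but it establishes the lemma with $x_\pm$ replaced by $y_\pm$ in the $f_n$ formula; state that correction explicitly rather than silently identifying your expression with the printed one.
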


\begin{proof}
The claim is verified for the $f_n$'s by induction (using: $P(n-2),
P(n-1) \Rightarrow P(n)$). Similarly, to verify the claim for the
$g_n$'s, we first define $h_n(T) = (g_n(T) - T^n) / T^{n-2}$ $\in
\mathbb{Z}[T,T^{-1}]$; one now shows that the equation for the $g_n$'s in
Proposition \ref{Pfg} is equivalent to: $h_1 = -T(T+3),\ h_2 =
-(T+1)(T+9)$, and
\[ h_{n+2} = 2 \left( \frac{T+1}{T} \right) h_{n+1} -
\frac{(T+1)(T-3)}{T^2} h_n. \]

\noindent One checks by induction, that the given formula solves this
system.
\end{proof}\hfill

\subsection{A central element that generates the center}
   
Recall that we wanted to write $\omega = [x, t - \half hz]$ as a
commutator of $x$ with a central element of $\mf{Ug}$ (see the remarks
after equation \eqref{Ereqd}). We first claim that $\omega$ can be
rewritten as $z[\half\Delta,x]-(e[z,y]+\half h[z,x])+\half[z,x]$. Indeed,
we can simplify this expression to get
\begin{eqnarray*}
&& \half z ((2h-3)x + 4ey) -(e[z,y]+\half h[z,x])+\half[z,x]\\
& = & (2ezy - \frac{3}{2} zx + hzx) + \half[z,x] - e[z,y] - \half h[z,x],
\end{eqnarray*}

\noindent which equals the expression used to define $\omega$. We
therefore work with this new expression, and further rewrite it as
\begin{eqnarray*}
\omega & = & z[\half \Delta, x] - ([z, ey] + [z, \half hx]) +
\half[z,x]\\
& = & z[\half \Delta, x] - \half [z, 2ey + hx - x]\\
& = & z[\half \Delta, x] - \frac{1}{4}[z, 4ey + (2h-3)x + x]\\
& = & z[\half \Delta, x] - \frac{1}{4}[z, [\Delta, x] + x]\\
& = & \frac{1}{4}(2z [\Delta, x] - z[\Delta, x] + [\Delta, x] z -
[z,x])\\
& = & \frac{1}{4}(z[\Delta,x] + [\Delta,x]z - [z,x])\\
& = & \frac{1}{4}[x, z - \Delta z] + \frac{1}{4}(z[\Delta, x] -
\Delta[z,x]).
 \end{eqnarray*}

We would like to show that $\omega = [x,q_z]$ for some $q_z \in
\mf{Z}(\mf{Ug})$. If we can now show that there exists $z_0 \in
\mf{Z}(\mf{Ug})$ such that $[z_0, x] = z[\Delta,x] - \Delta[z,x] = \Delta
xz - zx\Delta$, then $t - \half hz - q_z$ would be central, where
\begin{equation}\label{Eqz}
q_z = \frac{1}{4} z - \frac{1}{4} \Delta z - \frac{1}{4} z_0.
\end{equation}

\noindent The existence of $z_0$ follows from the following result,
setting $z' = \Delta$:

\begin{prop}\label{Pzz}
Given $z, z' \in \mf{Z}(\mf{Ug})$, we can find $z_0 = z_0(z,z') \in
\mf{Z}(\mf{Ug})$ such that
\[ [z_0, x] = zxz' - z'xz = z'[z,x] - z[z',x]. \]

\noindent and $\displaystyle z_0(c_1 \Delta^m + l.o.t.,\ c_2 \Delta^n +
l.o.t.) = c_1 c_2 \left( \frac{m-n}{m+n} \right) \Delta^{m+n} + l.o.t.$
\end{prop}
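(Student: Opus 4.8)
The plan is to reduce to the monomial case, rewrite the target using Propositions \ref{P1} and \ref{P2}, and then eliminate an error term by means of the anti-involution $j$. For the reduction I would use that $\mf{Z}(\mf{Ug})=k[\Delta]$ and that $(z,z')\mapsto zxz'-z'xz$ is $k$-bilinear: thus it suffices to construct $z_0(\Delta^m,\Delta^n)$ for all $m,n$ and then define $z_0(z,z')$ by $k$-bilinear extension, the identity $[z_0(z,z'),x]=zxz'-z'xz$ following from the monomial case since $w\mapsto[w,x]$ is $k$-linear. For monomials, $zxz'-z'xz=\Delta^m x\Delta^n-\Delta^n x\Delta^m=\Delta^n[\Delta^m,x]-\Delta^m[\Delta^n,x]$, and substituting the formula of Proposition \ref{P1} for $[\Delta^i,x]$ rewrites this element as $2\psi\,ey+(h\psi+\eta)x$, where $\psi:=\Delta^n f_m(\Delta)-\Delta^m f_n(\Delta)$ and $\eta:=\Delta^n g_m(\Delta)-\Delta^m g_n(\Delta)$ are central.

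Since the $f_i$ form a basis of $\mf{Z}(\mf{Ug})$ (Corollary \ref{C1}), I would write $\psi=\sum_{i>0}a_i f_i(\Delta)$ and put $\alpha:=\sum_{i>0}a_i\Delta^i$, $\beta:=\eta-\sum_i a_i g_i(\Delta)$. Then $(\psi,\eta,\alpha,\beta)$ satisfies part (3) of Proposition \ref{P2}, hence also its parts (1) and (2); that is, one has \emph{simultaneously} $\Delta^m x\Delta^n-\Delta^n x\Delta^m=[\alpha,x]+\beta x$ and $\Delta^m y\Delta^n-\Delta^n y\Delta^m=[\alpha,y]+\beta y$. The crux is to show that the error term $\beta$ vanishes; once this is done, $z_0:=\alpha\in\mf{Z}(\mf{Ug})$ satisfies $[z_0,x]=zxz'-z'xz$, and the $k$-bilinear extension handles all $z,z'$.

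To prove $\beta=0$, I would apply the anti-involution $j$ of Lemma \ref{L1}, which fixes $\Delta$ (hence $\alpha$ and $\beta$) and interchanges $x$ and $y$. Applying $j$ to the first identity, its left side becomes $\Delta^n y\Delta^m-\Delta^m y\Delta^n=-(\Delta^m y\Delta^n-\Delta^n y\Delta^m)$ and its right side becomes $-[\alpha,y]+y\beta$; comparing with the second identity gives $[\alpha,y]+\beta y=[\alpha,y]-y\beta$, i.e.\ $\beta y+y\beta=0$. As $\beta$ is central this rewrites as $[\beta,y]=2\beta y$; but by Proposition \ref{P1} the coefficient of $x$ in $[\beta,y]$ is nonzero unless $\beta$ is a scalar (the $f_i$ with $i>0$ being linearly independent), whereas $2\beta y$ has no $x$-component, so $\beta$ is a scalar, and then $2\beta y=0$ forces $\beta=0$. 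I expect this step — playing $j$ off against the two parallel conclusions of Proposition \ref{P2} — to be the one genuinely non-mechanical point; the alternative of extracting the required identity directly from the recursions of Proposition \ref{Pfg} looks considerably more laborious.

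Finally, for the leading term I would track top coefficients. By Corollary \ref{C1}, $f_i$ has degree $i-1$ with leading coefficient $2i$, so for $m\neq n$ the polynomial $\psi=\Delta^n f_m-\Delta^m f_n$ has degree $m+n-1$ with leading coefficient $2(m-n)$; since $\alpha=\sum_{i>0}a_i\Delta^i$ is determined by $\sum_{i>0}a_i f_i(\Delta)=\psi$, matching top coefficients forces $\alpha=\tfrac{m-n}{m+n}\Delta^{m+n}+\mathrm{l.o.t.}$ (normalizing $\alpha$ to have zero constant term; for $m=n$ one simply takes $z_0(\Delta^m,\Delta^m)=0$, consistent with the formula). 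Consequently, if $z=c_1\Delta^m+\mathrm{l.o.t.}$ and $z'=c_2\Delta^n+\mathrm{l.o.t.}$, then each monomial solution $z_0(\Delta^i,\Delta^j)$ has degree $i+j$, so the only contribution of top degree $m+n$ to $z_0(z,z')=\sum_{i,j} b_i c_j\,z_0(\Delta^i,\Delta^j)$ comes from the $(m,n)$ term and equals $c_1 c_2\,\tfrac{m-n}{m+n}\Delta^{m+n}$, as claimed.
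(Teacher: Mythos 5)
Your proposal is correct and follows essentially the same route as the paper's proof: reduce to monomials by bilinearity, rewrite $\Delta^n[\Delta^m,x]-\Delta^m[\Delta^n,x]$ via Proposition \ref{P1}, solve with Proposition \ref{P2}, kill the error term $\beta$ using the anti-involution $j$, and extract the top coefficient from Corollary \ref{C1}. The only (cosmetic) differences are that you apply $j$ to the $x$-identity and compare with the $y$-identity rather than the reverse, and you leave implicit the one-line check (done explicitly in the paper) that $\Delta^m y\Delta^n-\Delta^n y\Delta^m=2\psi fx+(\eta-h\psi)y$ with the same $\psi,\eta$, which follows from the $[\Delta^i,y]$ formula in Proposition \ref{P1}.
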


(Here, lower order terms are smaller powers of $\Delta$.)

\begin{proof}
First, it is easy to see that the ``solution" $z_0$ is ``bilinear" in $z,
z'$, in that $z_0(z+r,z'+s) = z_0(z,z') + z_0(z,s) + z_0(r,z') +
z_0(r,s)$ for all $z,r,z',s$ central in $\mf{Ug}$. It therefore suffices
to prove the result for $z = \Delta^m, z' = \Delta^n$ for some $m,n \geq
0$. But then we have
\begin{eqnarray*}
zxz' - z'xz & = & \Delta^n[\Delta^m, x] - \Delta^m [\Delta^n, x]\\
& = & \Delta^n [(f_m(\Delta)h + g_m(\Delta))x + 2 f_m(\Delta) ey]\\
&& - \Delta^m [(f_n(\Delta)h + g_n(\Delta))x + 2 f_n(\Delta) ey]\\
& = & 2 \psi ey + (h \psi + \eta)x,
\end{eqnarray*}

\noindent where
\[ \psi = \Delta^n f_m(\Delta) - \Delta^m f_n(\Delta), \qquad \eta =
\Delta^n g_m(\Delta) - \Delta^m g_n(\Delta). \]

In particular, the top degree and coefficient of $\psi$ can be computed
from Corollary \ref{C1}:
\begin{equation}\label{Epsi}
\psi = (2m-2n) \Delta^{m+n-1} + l.o.t..
\end{equation}

\noindent By Proposition \ref{P2} above, $zxz' - z'xz = [\alpha, x] +
\beta x$ for some central $\alpha, \beta \in \mf{Ug}$.

Let us also evaluate $\Delta^m y \Delta^n - \Delta^n y \Delta^m$. We get
\begin{eqnarray*}
zyz' - z'yz & = & \Delta^n[\Delta^m, y] - \Delta^m [\Delta^n, y]\\
& = & \Delta^n [(g_m(\Delta) - f_m(\Delta)h)y + 2 f_m(\Delta) fx]\\
&& - \Delta^m [(g_m(\Delta) - f_n(\Delta)h)y + 2 f_n(\Delta) fx]\\
& = & 2 \psi' fx + (\eta' - h \psi')y,
\end{eqnarray*}

\noindent where
\[ \psi' = \Delta^n f_m(\Delta) - \Delta^m f_n(\Delta) = \psi, \qquad
\eta' = \Delta^n g_m(\Delta) - \Delta^m g_n(\Delta) = \eta. \]

\noindent By Proposition \ref{P2}, this equals $[\alpha, y] + \beta y$
for the same $\alpha, \beta$ as above. Thus, 
\[ zxz' - z'xz = [\alpha, x] + \beta x, \qquad zyz' - z'yz = [\alpha, y]
+ \beta y, \]

\noindent where $z,z' \in \mf{Z}(\mf{Ug})$. We now prove that $\beta =
0$, as desired. Applying the anti-involution $j$ to the second of the
equations, and noting that $j$ preserves $z,z'$ (since it preserves
$\Delta$) and sends $y$ to $x$, we get
\[ z'xz - zxz' = [x, \alpha] + x \beta. \]

\noindent Comparing with the (negative of the) first equation, we see
that
\[ [x, \alpha] - \beta x = [x, \alpha] + x \beta, \]

\noindent whence we conclude that
\[ 0 = \beta x + x \beta = 2 \beta x - [\beta, x], \]

\noindent and the uniqueness result in Proposition \ref{P2} implies
$\beta = 0$, as claimed. Hence $zxz' - z'xz = [\alpha, x]$, as desired.

Moreover, to show the last equation, it suffices by (bi)linearity of
$z_0$ to show that $z_0(\Delta^m, \Delta^n)$ is of the desired form (with
$c_1 = c_2 = 1$). But $z_0(\Delta^m, \Delta^n) = \alpha$ above, so we
need to compute the top degree and coefficient of $\alpha$. This comes
from $\psi$ (equation \eqref{Epsi}) and the ``unipotent" change of basis
from the $f_n$'s to the $\Delta^{n-1}$'s (Corollary \ref{C1}). Thus,
$\psi = \frac{2m-2n}{2(m+n)} f_{m+n} + l.o.t.$. Now use Proposition
\ref{P2} to get that
\[ z_0(\Delta^m, \Delta^n) = \alpha = \frac{2m-2n}{2(m+n)} \Delta^{m+n} +
l.o.t.. \]
\end{proof}

As a consequence, we have information about $q_z$ (see equation
\eqref{Eqz} above):

\begin{cor}\label{C3}
For any $z = c \Delta^m + l.o.t.$, $q_z = \frac{-cm}{2(m+1)} \Delta^{m+1}
+ l.o.t.$.
\end{cor}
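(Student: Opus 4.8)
The plan is to read off the leading term of $q_z$ directly from its defining formula \eqref{Eqz}, namely $q_z = \frac14 z - \frac14 \Delta z - \frac14 z_0$, where $z_0 = z_0(z,\Delta)$ is the element supplied by Proposition \ref{Pzz} with $z' = \Delta$. The whole argument then reduces to comparing top degrees and top coefficients of these three summands.

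First I would note that, writing $z = c\Delta^m + l.o.t.$, the summand $\frac14 z = \frac{c}{4}\Delta^m + l.o.t.$ has degree $m$, while the other two summands $-\frac14\Delta z$ and $-\frac14 z_0$ have degree $m+1$; hence $\frac14 z$ contributes only to lower-order terms and may be dropped when computing the $\Delta^{m+1}$-coefficient of $q_z$. Plainly $-\frac14\Delta z = -\frac{c}{4}\Delta^{m+1} + l.o.t.$. For the last summand I would invoke the explicit formula in Proposition \ref{Pzz} with first argument $z = c\Delta^m + l.o.t.$ and second argument $\Delta = 1\cdot\Delta^{1}$: it gives $z_0 = z_0(z,\Delta) = c\cdot\frac{m-1}{m+1}\,\Delta^{m+1} + l.o.t.$, so that $-\frac14 z_0 = -\frac{c(m-1)}{4(m+1)}\,\Delta^{m+1} + l.o.t.$

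I would then add the two contributions to the $\Delta^{m+1}$-coefficient of $q_z$, obtaining $-\frac{c}{4} - \frac{c(m-1)}{4(m+1)} = \frac{-c(m+1) - c(m-1)}{4(m+1)} = \frac{-2cm}{4(m+1)} = \frac{-cm}{2(m+1)}$, which is exactly the claimed leading coefficient (and when $m = 0$ the two degree-one terms cancel, consistently with this coefficient being $0$). Since the computation is just a single substitution into \eqref{Eqz} using the already-established asymptotics of $z_0$ from Proposition \ref{Pzz}, there is no real obstacle; the only point worth flagging is that the summand $\frac14 z$ in \eqref{Eqz} has strictly smaller degree than the other two, and therefore plays no role in the leading-order computation.
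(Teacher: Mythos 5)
Your computation follows the paper's own proof of this corollary essentially line by line (read off the leading term of $q_z$ from \eqref{Eqz}, discard $\frac14 z$, and use Proposition \ref{Pzz} for $z_0$), and the arithmetic as you set it up is internally consistent. But there is one genuinely delicate point that you (like the printed proof) pass over: which of $z_0(z,\Delta)$, $z_0(\Delta,z)$ actually occurs in \eqref{Eqz}. The element $z_0$ entering \eqref{Eqz} is characterized by $[z_0,x]=z[\Delta,x]-\Delta[z,x]=\Delta x z - z x \Delta$, and in the notation of Proposition \ref{Pzz} (where $[z_0(a,b),x]=axb-bxa$) this is $z_0(\Delta,z)=-z_0(z,\Delta)$ up to an additive constant, i.e.\ $\Delta$ sits in the \emph{first} slot --- which is also how the element is written in the displayed formula for $t_z$ right after the corollary. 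With that orientation, Proposition \ref{Pzz} gives $z_0 = c\,\frac{1-m}{1+m}\Delta^{m+1} + l.o.t.$, so the $\Delta^{m+1}$-coefficient of $q_z$ is $-\frac{c}{4}-\frac{c(1-m)}{4(m+1)}=\frac{-c}{2(m+1)}$, not $\frac{-cm}{2(m+1)}$; the two agree only when $m=1$.

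You can test this against the paper's own explicit cases. For constant $z=c$ (so $m=0$), equation \eqref{Ereqd} collapses to $\omega=\frac{c}{2}[\Delta,x]$, whence $q_z=-\frac{c}{2}\Delta$ up to a constant --- exactly the $+\half\Delta$ appearing in $t_1=ey^2+hxy-fx^2-\half h+\half\Delta$ --- whereas the claimed coefficient $\frac{-cm}{2(m+1)}$ would be $0$. For $z=a\Delta+b$ the displayed $t_z$ has leading correction $-\frac{a}{4}\Delta^2$, consistent with $\frac{-c}{2(m+1)}$ at $m=1$. So the step that fails in your argument is precisely the substitution ``first argument $z$, second argument $\Delta$'' into Proposition \ref{Pzz}: it flips the sign of the $z_0$-contribution. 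The corrected leading coefficient $\frac{-c}{2(m+1)}$ is still nonzero of the same sign for all $m\geq 1$, so the only later use of this corollary (in the proof of Proposition \ref{Pstep}) is unaffected.
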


\begin{proof}
From equation \eqref{Eqz}, the top term of $q_z$ comes from the last two
terms, since $z_0 = z_0(z,\Delta)$ here. If $z = c \Delta^m + l.o.t.$
here, then by Proposition \ref{Pzz}, the top term is
\[ -\frac{1}{4}c \Delta^{m+1} - \frac{1}{4} c \left( \frac{m-1}{m+1}
\right) \Delta^{m+1} \]

\noindent and this simplifies to the desired form.
\end{proof}

This shows us that the center of $H_z$ is nonempty and contains an
element of the form
\[ t_z = t - \half hz - \frac{1}{4} z + \frac{1}{4} \Delta z +
\frac{1}{4} z_0, \]
where $z_0 = z_0(\Delta, [x,y])$ as in the above results.

\subsection{Various centralizers and the center}

It just remains to prove that this element $t_z$ generates the whole
center of $H_z$. We do this in steps.
First, we describe the elements of $H_z$ which commute with various sets.

\begin{prop}\label{P3}\hfill
\begin{enumerate}
\item The centralizer in $H_z$ of $\mf{Ug}$ is freely generated by
$\Delta, t_z$.

\item
\begin{enumerate}
\item The centralizer of $e$ (i.e., the set of $\mf{sl}_2$-maximal
vectors) in $H_z$ is the subalgebra generated by $\Delta, t_z, e, x$.

\item The centralizer of $e$ and $x$ (together) in $H_z$ is freely
generated by $t_z, e, x$.
\end{enumerate}

\item The centralizer of $V$ in $H$ (for $z=0$) is freely generated by
$t, x, y$.
\end{enumerate}
\end{prop}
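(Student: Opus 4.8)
The plan is to reduce the whole proposition to a Poisson‑theoretic computation in a polynomial ring, reached by passing twice to an associated graded algebra.

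Equip $H_z$ with the $V$-filtration, assigning $V$ degree $1$ and $\mf{Ug}$ degree $0$. By the PBW property its associated graded is $\mathcal{U}:=\mf{U}(\mf{sl}_2\ltimes V)$ with $V$ abelian, which is a domain; and $\mathcal{U}$ in turn (for the standard filtration of an enveloping algebra) has associated graded the polynomial ring $R:=\Sym(\mf{sl}_2\ltimes V)=k[e,f,h,x,y]$, carrying the Poisson bracket induced by the Lie bracket. All three of $H_z,\mathcal{U},R$ are $\ad\mf{sl}_2$-completely reducible (Corollary \ref{C0}), and both filtrations are by $\ad\mf{sl}_2$-submodules. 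The symbols of the distinguished elements are: $\Delta\rightsquigarrow\Delta\rightsquigarrow h^2+4ef=:\Delta_0$, $t_z\rightsquigarrow t\rightsquigarrow ey^2+hxy-fx^2=:t_0$, $e\rightsquigarrow e\rightsquigarrow e$, $x\rightsquigarrow x\rightsquigarrow x$; and since $[\Delta,x]=(2h-3)x+4ey$, the element $r:=\half([\Delta,x]+3x)=hx+2ey$ lies in the subalgebra generated by $\Delta$ and $x$ and has symbols $r\rightsquigarrow hx+2ey=:r_0$.

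Two reductions drive everything. First, if $w\in H_z$ centralizes a set $S$ of elements of $V$-degree $0$, then (since $\mathcal{U}$ and $R$ are domains, so symbols of products do not unexpectedly vanish) the $V$-symbol of $w$ centralizes the $V$-symbols of $S$ in $\mathcal{U}$, and one step further its symbol Poisson-centralizes the images of $S$ in $R$. Second, for $S=\{e,f,h\}$ the centralizer is the space of $\ad\mf{sl}_2$-invariants, and for $S=\{e\}$ it is the space of $\mf{sl}_2$-maximal vectors; both ``invariants'' and ``maximal vectors'' are exact functors on completely reducible $\mf{sl}_2$-modules compatible with the two filtrations (short exact sequences of such modules split), so for these $S$ the associated graded of the centralizer equals the centralizer in the associated graded, at each of the two steps. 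Hence, for each part of the Proposition it suffices to (i) check that the listed generators lie in the centralizer, commute as asserted, and are algebraically independent --- the last by inspecting their symbols in $R$, which also yields the claimed freeness; and (ii) identify the corresponding Poisson centralizer in $R$ with the image of the subalgebra they generate. Writing $A$ for that subalgebra, one then has $A\subseteq Z_{H_z}(S)$, with both mapping onto the same subspace of $R$ after passing through the two associated gradeds; since the relevant filtrations (inherited from $H_z$) are exhaustive and separated, this forces $A=Z_{H_z}(S)$.

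It remains to do the computations in $R=k[e,f,h,x,y]$. For part (3), $\{x,-\}$ and $\{y,-\}$ are the commuting derivations $-y\partial_f-x\partial_h$ and $-x\partial_e+y\partial_h$; the resulting distribution is integrable, and on each fibre $\{x=a,\,y=b\}$ one reads off that the first integrals are the polynomials in $b^2e+abh-a^2f=t_0|_{x=a,y=b}$, so clearing denominators in $x,y$ gives common kernel $=k[x,y,t_0]$, the image of the subalgebra generated by $t,x,y$. For part (2)(b), $\{e,-\}$ and $\{x,-\}$ are again commuting derivations, and the same fibrewise analysis (now over $\{x=a\}$) gives common kernel $=k[e,x,t_0]$, the image of the subalgebra generated by $t_z,e,x$. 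For part (1), $R^{\mf{sl}_2}=k[\Delta_0,t_0]$ by classical invariant theory (the $SL_2$-invariants of a binary quadratic and a binary linear form), the image of the subalgebra generated by $\Delta,t_z$. For part (2)(a), the ring $R^{U}$ of $\mf{sl}_2$-maximal vectors is, via the standard identification $R^{U}\cong k[(\mf{sl}_2\ltimes V)\oplus k^2]^{SL_2}$, the classical invariant ring of a binary quadratic and two binary linear forms: it is generated by $\Delta_0,t_0,e,x$ and the extra generator $r_0=hx+2ey$ (with one relation $r_0^2=4et_0+\Delta_0x^2$ that we do not need). Since $r_0$ is the symbol of $r=\half([\Delta,x]+3x)$, which already lies in the subalgebra generated by $\Delta,t_z,e,x$, that subalgebra has $R^{U}$ as its image, completing (2)(a).

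The main obstacle is step (ii) for part (2)(a): recognizing that $R^U$ is strictly larger than $k[\Delta_0,t_0,e,x]$ --- it contains the weight‑one maximal vector $r_0=hx+2ey$ --- and checking that $r_0$ is nevertheless captured inside the subalgebra generated by $\Delta,t_z,e,x$, precisely because it is the symbol of the commutator expression $\half([\Delta,x]+3x)$. Everything else is the two formal reductions above, the elementary integrable‑foliation computations in $R$ for (2)(b) and (3), and the standard invariant‑theory facts for (1) and (2)(a).
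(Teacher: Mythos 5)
Your argument is essentially correct, but it takes a genuinely different route from the paper. The paper never leaves the noncommutative world: it reduces to $z=0$ by lifting generators along the $V$-filtration and then runs a hands-on induction inside $H=H_0$ with PBW monomials, weight arguments, the ``dividing trick'' \eqref{Etrick}, and the identity $t^n-e^ny^{2n}\in H\cdot x$. You instead pass through a second associated graded to the Poisson algebra $R=\Sym(\mf{sl}_2\ltimes V)$ and identify the Poisson centralizers there by classical invariant theory of binary forms (invariants of a quadratic and one linear form for part (1), covariants --- equivalently invariants of a quadratic and two linear forms --- for part (2)(a)) together with elementary kernel-of-derivation computations for parts (2)(b) and (3), then lift back by the standard symbol-subtraction induction. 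Your key observation is the right one and mirrors the paper's: the covariant ring $R^e$ has the extra generator $r_0=hx+2ey$ with $r_0^2=4et_0+\Delta_0x^2$ (which I checked), and $r_0$ is the symbol of $\half([\Delta,x]+3x)\in k\langle\Delta,x\rangle$ --- exactly the role $[\Delta,x]$ plays in the paper's odd-$k$ case. What your approach buys is conceptual transparency (the non-freeness in (2)(a) is explained by the classical syzygy) at the cost of importing the classical complete-system theorem; the paper's approach is longer but self-contained.

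A few places where you assert more than you prove, none of which I regard as fatal but all of which need to be written out: (i) the statement that the covariants of a binary quadratic plus a linear form are generated by exactly those five elements is the entire content of part (2)(a) at the graded level, so it needs a citation to a precise classical source or a direct argument (e.g.\ the localization at $x$ you implicitly use elsewhere); (ii) the fibrewise foliation computations for (2)(b) and (3) only identify the kernel over the generic fibre, so the ``clearing denominators'' step (showing $k(x,y)[t_0]\cap R=k[x,y,t_0]$, and similarly with $x$ inverted) must be carried out; (iii) the lifting induction has to be done through a bifiltration (the $V$-filtration to $\mf{U}(\mf{g}\ltimes V)$, then the standard filtration to $R$) --- one cannot use a single filtration since $\deg z$ may be large --- and the degree-matching of symbols works only because all your generators are homogeneous for both gradings, which should be said; (iv) your ``first reduction'' is stated only for $S$ of $V$-degree $0$, which excludes $x,y$ needed in (2)(b) and (3), and your exactness remark about invariants/maximal vectors is likewise unavailable there --- fortunately neither restriction is needed, since commuting elements always have (Poisson-)commuting symbols and your closing $A\subseteq Z$, equal-symbol-image induction is what actually carries all four parts.
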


\noindent Using the anti-involution $j$, we get similar results involving
$f,y$.

\begin{proof}
In all but the last part, it is enough to show that the prescribed
elements generate the centralizer (call it $B$ for this paragraph and the
next) in $H$ (i.e., when $z=0$). This is because all ``claimed
generators" ($\Delta, t_z, e, x$) in $H$ have lifts to $H_z$, and any $b
\in B$ has a principal symbol in $H$, a lift of which can be subtracted
from $b$ to get $b' \in B$ of ``smaller filtration degree" in $V$. Now
proceed by induction.

Moreover, that the prescribed elements freely generate $B \subset H_z$
(except possibly for $H_z^e$) would follow from the corresponding
statement for $z=0$, since any relation among the lifts in $H_z$ gives a
relation in $H$. Let us start by showing that various elements are
algebraically independent in $H$.

We first note that $t = t_0, \Delta$ are algebraically independent in
$H$, for if $\sum a_{ij} t^i \Delta^j$ $= 0$, then checking the
coefficients of $x,y$ (via \eqref{Eeqn} below) gives the result.

Next, we claim that $t,e,x$ are algebraically independent in $H$.
Indeed, if $\sum a_{q,r,s} e^q t^r x^s = 0$, then consider the highest
power of $y$ that occurs (i.e., $2r$ for the highest $r$); then for this
$r$, consider the highest power of $e$. Now for these, the highest power
of $x$ must have coefficient $a_{qrs} = 0$.

Finally, $t,x,y$ are algebraically independent, for if $\sum a_{qrs} t^q
x^r y^s = 0$, then writing this element in terms of the ordered PBW-basis
$(e,f,h,x,y)$, we can conclude that $a_{qrs} \equiv 0$.

\begin{enumerate}
\item By passing to the associated graded, it is enough to show the
proposition for $z=0$; thus, we assume that $H_z=H$.

Let $a$ be an element in $H$ which is in the centralizer of $\mf{Ug}$;
without loss of generality, we may assume $a$ to be a weight vector for
$\ad h$ and to be homogeneous in $x$ and $y$, by decomposing it into such
components (since $\ad \mf{g}$ preserves this grading degree).

Writing $a$ as a polynomial in the PBW basis above, let $n$ be the
smallest power of $x$ appearing in this polynomial. Thus, $a = b x^n$ for
some $b \in H$.

Since $H$ is an integral domain, and $a,x$ commute with $e$, so does $b$,
by the ``dividing trick" \eqref{Etrick}.
Since $[h,a] = 0$, hence $a$ is in the 0 weight space, whence the weight
of $b$ is $-n$. But no maximal vectors in $H$ may have a negative weight
(by $\mf{sl}_2$-theory and Lemma \ref{L0}), whence $n=0$.

Now let us look at the monomial term of $a$ with the highest power of
$y$. Since $a$ is not divisible by $x$ and is homogeneous, this term must
be of the form $cy^m$, for some $c \in \mf{Ug}$.
We claim that $[e,c] = 0$. This is because $[e,a] = 0$, and upon applying
$\ad e$, the power of $x$ in a monomial cannot decrease, and the power of
$y$ cannot increase. So we have
\[ 0 = [e,a] = [e,c]y^m + c [e,y^m] + [e, \dots], \]

\noindent and the only monomial with no $x$'s and $m$ $y$'s in it, is
$[e,c] y^m$.

We thus get that $c$ is maximal in $\mf{Ug}$, of weight $m$. Thus $m$ is
even, and $c$ is of the form $e^{m/2} \alpha$ for some central $\alpha$,
by Lemma \ref{L2}.

Let us now consider $a - \alpha t^{m/2}$. By \eqref{Eeqn} below, the
monomials in either term of highest $y$-degree, are $\alpha e^{m/2} y^m$.
Therefore $a - \alpha t^{m/2}$ has highest power of $y$ (without any
power of $x$) in a monomial, strictly less than $m$. Arguing inductively,
we get down to when $m=0$, leaving us with a vector in $\mf{Ug}$. This
commutes with $\mf{Ug}$, so it is central in $\mf{Ug}$, and we are
done.\medskip

\item Once again, we may assume that $z=0$. Let $a$ be a {\it weight}
vector that commutes with $e$; we may assume that it is also homogeneous
(in $V$, say of degree $k$, on which we will do induction) and not
divisible by $x$ from the right (by the ``dividing trick"
\eqref{Etrick}). Hence it may be written as $a = \sum_{0 \leq i \leq k}
c_i y^i x^{k-i}$, where $c_i \in \mf{Ug}\ \forall i$ (and $c_k \neq 0$).
Moreover, $[e,a] = 0$ yields: $c_i = -[e,c_{i-1}/i]$, whence $c_i = (\ad
(-e))^i(c_0) / i!$. In particular, $c_0 \neq 0$ as well.

Now consider $c_k$; we claim that $c_k$ is $\mf{g}$-maximal too, since
$[e,c_k]$ is the coefficient of $y^k$ in $[e,a]$. By Lemma \ref{L2}, $c_k
= \alpha e^n$ (since $a$ is a weight vector), with $\alpha \in
\mf{Z}(\mf{Ug})$.

\begin{enumerate}
\item We now prove this part by induction on $k$. The base case of $k=0$
follows from Lemma \ref{L2}. Now continue with the above analysis. Note
that $\alpha e^n \in \mf{Ug}$ is $\mf{sl}_2$-maximal of weight $2n$,
whence $(-1)^k / k! \cdot (\ad e)^k(c_0) = \alpha e^n \notin (\ad
e)^{2n+1}(\mf{Ug})$. In particular, $k \leq 2n$, whence $n \geq \lceil
k/2 \rceil$. We now have two cases:
\begin{itemize}
\item If $k$ is even, we define $b := a - \alpha e^{n-(k/2)} t^{k/2}$.

\item If $k$ is odd, we use $[\Delta,x] =$ (up to scaling) $[4fe + h^2 +
2h, x] = 4 ey + 2hx - x$. In this case, we define
\[ b := a - \frac{1}{4} \alpha e^{n - \lceil k/2 \rceil} t^{\lfloor k/2
\rfloor} \cdot [\Delta, x]. \]
\end{itemize}

\noindent In both cases, $b \in H \cdot x$ by \eqref{Eeqn} below, and by
the ``dividing trick" \eqref{Etrick}, the quotient is a weight vector
with {\it smaller} degree of homogeneity (in $V$), so we are done by
induction.\medskip

\item We continue from where we had stopped before the previous sub-part.
Now suppose that $a$ commutes with $x$ as well. Then $\alpha_y = 0$,
where $[\alpha, x] = \alpha_x x + \alpha_y y$ (looking at the coefficient
of $y^{k+1}$). But by Proposition \ref{P1}, this can only happen if
$\alpha$ is a constant; let us suppose it is 1. Thus, we have $c_k = e^n
= (-1)^k /k! \cdot (\ad e)^k c_0$, whence $c_0$ has weight $2(n-k)$.

Next, note that if $[c_0, x] = rx + sy$ with $r,s \in \mf{Ug}$, then
$r=0$ by considering the coefficient of $x^{k+1}$ in $[a,x] = 0$. Now
suppose that we write $c_0 = \sum_i e^{n-k+i} f^i p_i(h)$ for polynomials
$p_i$. We claim that the $p_i$'s are constant, for otherwise
\begin{eqnarray*}
[c_0, x] & = & \sum_i e^{n-k+i} \left( i f^{i-1} y \cdot p_i(h) + f^i
[p_i(h), x] \right)\\
& = & \sum_i e^{n-k+i} \left( i f^{i-1} p_i(h+1) y + f^i (p_i(h) -
p_i(h-1))x \right),
\end{eqnarray*}

\noindent and this is not in $\mf{Ug} \cdot y$ as claimed above.

Thus, we have $c_0 = \sum_{i=0}^N \alpha_i e^{n-k+i} f^i$, say. Now
consider a general situation in $\mf{U}(\mf{sl}_2)$: repeatedly applying
$\ad e$ to $f^i$ for any $i$ can only lead to $e^j$ (up to a scalar) if
$j=i$; and then $\ad e (e^i) = 0$. On the other hand, if $(\ad
e)^{2i}(f^i) \in k^\times \cdot e^i$, then $(\ad e)^j (f^i)$ is not a
power of $e$ if $j<2i$ (else $(\ad e)^{j+1}(f^i) = 0$), and vanishes if
$j>2i$.

Thus, if we now consider the ``last" summand in $c_0$, $(\ad e)^k$ must
send $f^N$ to $e^{k-N}$, in order that we get $e^{n-k + N + k-N} = e^n$.
But then $k = 2N$, and we get that
\[ c_0 = \alpha_{k/2} e^{n-k/2} f^{k/2} + \dots + \alpha_0 e^{n-k} f^0 \]

\noindent and $c_i = \ad(-e)^i(c_0) / i!$ is also divisible by $e^{n-k}$
for all $i$. Hence taking $e^{n-k}$ common on the left, we get that
\[ a = e^{n-k} \left( e^k y^k + \dots + (\alpha_{k/2} e^{k/2} f^{k/2} +
\dots + \alpha_0) x^k \right). \]

In particular, by the ``dividing trick" \eqref{Etrick}, the terms in the
parentheses commute with $e,x$. We can divide by $e^{n-k}$ and then
subtract $e^{k/2} t^{k/2}$.

Now note (as an aside) that $t = ey^2 + (hy + fx)x$, so that $t^n -
(ey^2)^n \in H \cdot x\ \forall n$. It is also easy to check that
$(ey^2)^n - e^ny^{2n} \in H \cdot x$ (e.g., by induction on $n$). Thus,
\begin{equation}\label{Eeqn}
t^n - e^n y^{2n} \in H \cdot x\ \forall n.
\end{equation}

\noindent In particular, $e^{k/2} t^{k/2} - e^k y^k \in H \cdot x$. Using
the ``dividing trick" \eqref{Etrick}, dividing this by $x$ yields a
maximal vector $a'$ that commutes with $e,x$, is a weight vector, and is
homogeneous of {\it smaller} degree than $k$, whence we are done by
induction.

It remains to check the base case; but $k=0$ would mean the centralizer
of $e,x$ in $\mf{Ug}$, and by Lemma \ref{L2} and properties of
$[\Delta^n,x]$, the only such elements are polynomials in $e$.
\end{enumerate}\medskip

\item Since both sides of the desired equality are $(\ad)
\mf{g}$-submodules of $H$ (and $H$ is a direct sum of finite-dimensional
$\mf{g}$-modules by Corollary \ref{C0}), it would suffice to show that
any $\mf{g}$-maximal vector from $H^V$ belongs to $\mf{Z}(H) \Sym V$. By
the previous part, this consists of the $y$-centralizer of $H^{\{ e,x \}}
= k[t,e,x]$. Since $t,x$ are in this centralizer, say $\sum_i r_i(t,x)
e^i$ commutes with $y$. Thus,
\[ \sum_i r_i(t,x) i e^{i-1} x = \left[ \sum_i r_i(t,x) e^i, y \right] =
0 \]

\noindent and by the algebraic independence of $t,e,x$, we are done.
\end{enumerate}
\end{proof}

We can finally conclude the proof of the first part of Theorem \ref{T1}
above.

\begin{proof}[Proof of the first part]
Let $a$ be a central element of $H_z$. In particular, it commutes with
$\mf{g}$, so by Proposition \ref{P3}, it can be written as a polynomial
in $t_z$ with coefficients in $\mf{Z}(\mf{Ug})$. Let $\kappa t_z^n$ be a
monomial of top degree. Since $[x,a]=0$, passing to the associated graded
ring (with respect to the filtration), we get that $[x,\kappa]=0$.

By Corollary \ref{C1} above, $\kappa$ is a scalar; so we may disregard
the top term of $a$. Continuing by induction, we see that all
coefficients of $a$ are scalars. Hence the center of $H_z$ is generated
by $t_z$, and it is transcendental over $k$ if $t$ is transcendental in
$H = H_0$. But this follows by the PBW property.
\end{proof}\hfill

We conclude our discussion of the center by giving an explicit formula
for the central element when $z$ is (at most) linear. Suppose $[x,y] =
a\Delta + b$ for scalars $a,b$. We therefore want to produce $z_0$
central in $\mf{U}(\mf{sl}_2)$, such that
\[ [z_0, x] = \Delta x (a \Delta + b) - (a \Delta + b) x \Delta =
b(\Delta x - x \Delta) = b[\Delta,x]. \]

\noindent Therefore $z_0 = b \Delta$ works, and we have the central
element
\[ t'_z = ey^2 + hxy - f x^2 - \half h(a\Delta + b) + \frac{1}{4}(\Delta
(a \Delta + b) - (a \Delta + b) + b\Delta). \]

\noindent Removing the scalar $-5b/4$, we get the desired generating
central element to be (up to adding a scalar)
\[ t_z = e y^2 + hxy - fx^2 - \half h(a \Delta + b) + \frac{1}{4}(a
\Delta^2 + (2b-a)\Delta). \]

\section{Derivations and commutator quotient}

\subsection{Derivations}

We now compute the space of derivations. Note that if $D$ is a derivation
of $H$, then we may assume, modulo an inner derivation, that it vanishes
on $\mf{Ug}$, since $\mf{g}$ is simple. Thus, $D$ is a $\mf{g}$-module
map, so $D(x)$ is a maximal vector of weight 1. By Proposition \ref{P3},
it is of the form
\[ D(x) = \sum_{i \geq 0} b_i(t_z) r_i + c_i(t_z) s_i, \]

\noindent where $r_i := \Delta^i x,\ s_i := [\Delta^i,x]\ \forall i$.
But since we can rewrite the sum of half of these terms as
\[ \sum_i c_i(t_z) s_i = \sum_i c_i(t_z) [\Delta^i, x] = \left[ \sum_i
c_i(t_z) \Delta^i, x \right], \]

\noindent hence by subtracting another inner derivation, we may assume
that $D(x) = \sum_i b_i(t_z) \Delta^i \cdot x$. (Note that this change
does not affect the fact that $D \equiv 0$ on $\mf{Ug}$.) Let us also
denote $\sum_i b_i(t_z) \Delta^i$ by $\omega$.

We now compute $D(y)$: we claim that $D(y) = \omega y$. To see this,
apply $D$ to the relation $[e,y] = x$. Then
\[ [e, D(y)] = D(x) = \sum_i b_i(t_z) \Delta^i \cdot x, \]

\noindent whence it is easy to see that $[e, D(y) - \omega y] = 0$. Since
$D$ is now a $\mf{g}$-module map, hence $D(y)$, and thus $D(y) - \omega
y$, are both weight vectors of weight $-1$. But the last is also maximal,
from above. Hence it vanishes, i.e., $D(y) = \omega y$.

We also carry out a key computation, that we shall need later. Recall the
polynomials $f_n, g_n$ that came up while computing $[\Delta^n, x]$.

\begin{lemma}\label{L5}
For all $n$, we have
\[ [\Delta^n, x]y - [\Delta^n, y]x = 2f_n(\Delta) (ey^2 + hxy - fx^2 -
\half hz) + g_n(\Delta) z. \]
\end{lemma}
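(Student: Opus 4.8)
The plan is to prove Lemma~\ref{L5} by a short direct computation, combining the explicit commutator formulas of Proposition~\ref{P1} with the single defining relation $[x,y]=z$ of $H_z$. First I would substitute
\[ [\Delta^n, x] = (f_n(\Delta)h + g_n(\Delta))x + 2f_n(\Delta)ey, \qquad [\Delta^n, y] = 2f_n(\Delta)fx + (g_n(\Delta) - f_n(\Delta)h)y \]
(valid in $H_z$ for any $z$, since their derivation uses only the $\mf{g}$-action on $V$) into the left-hand side $[\Delta^n,x]y - [\Delta^n,y]x$. This produces the four terms
\[ (f_n(\Delta)h + g_n(\Delta))\,xy + 2f_n(\Delta)\,ey^2 - 2f_n(\Delta)\,fx^2 - (g_n(\Delta) - f_n(\Delta)h)\,yx. \]

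Next I would use $yx = xy - z$ to eliminate the $yx$ term. The coefficients of $xy$ then add up to $2f_n(\Delta)h$, while one extra summand $(g_n(\Delta) - f_n(\Delta)h)z$ appears. Since $z\in\mf{Z}(\mf{Ug})$ (indeed $z = z(\Delta)$), it commutes with $\Delta$, with $h$, and with the polynomials $f_n(\Delta), g_n(\Delta)$, so all central factors may be reordered freely inside $\mf{Ug}$; collecting terms yields
\[ 2f_n(\Delta)(ey^2 + hxy - fx^2) - f_n(\Delta)hz + g_n(\Delta)z = 2f_n(\Delta)\bigl(ey^2 + hxy - fx^2 - \half hz\bigr) + g_n(\Delta)z, \]
which is exactly the claimed identity. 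In the notation of Lemma~\ref{L1} the parenthesized expression is $t - \half hz$, so the lemma reads equivalently $[\Delta^n,x]y - [\Delta^n,y]x = 2f_n(\Delta)(t - \half hz) + g_n(\Delta)z$.

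There is no genuine obstacle here; the whole argument is bookkeeping. The only points needing a moment's care are (i) keeping each central polynomial $f_n(\Delta), g_n(\Delta)$ on its correct side and remembering that these, together with $z$ and $h$, commute pairwise inside $\mf{Ug}$, so that no spurious commutators creep in; and (ii) noticing that $xy$ versus $yx$ is the only noncommuting pair in play, so that the relation $[x,y]=z$ is invoked exactly once. If a more structural argument were desired, one could instead note that both sides are $\ad\mf{g}$-equivariant and check equality on the coefficients of $y^2$ and of $xy$ in the ordered PBW basis $(e,f,h,x,y)$; but the direct substitution above is the most economical route.
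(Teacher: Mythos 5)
Your computation is correct and follows essentially the same route as the paper's proof: substitute the formulas for $[\Delta^n,x]$ and $[\Delta^n,y]$ from Proposition~\ref{P1}, apply $yx = xy - z$ once, and collect terms. The bookkeeping matches the paper line for line.
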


\begin{proof}
In what follows, we omit the $(\Delta)$, and refer to the polynomials
merely as $f_n, g_n$.
\begin{eqnarray*}
[\Delta^n, x]y - [\Delta^n, y]x & = & [2 f_n ey + (f_n h + g_n)x]y - [2
f_n fx + (g_n - f_n h)y]x\\
& = & f_n(2 ey^2 + h(xy + yx) - 2 fx^2) + g_n(xy - yx)\\
& = & f_n(2 ey^2 + h(2xy - z) - 2 fx^2) + g_n z,
\end{eqnarray*}

\noindent and hence we are done.
\end{proof}\hfill

We are now ready to finish the proof of the second part of Theorem
\ref{T1}.

\begin{prop}
If $z = 0$ (and $H = H_0$), then $Der(H) / Inn(H)$ is a rank one free
module over the center of $H$.
\end{prop}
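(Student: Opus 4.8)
The plan is to carry the normalization above to its conclusion. At this point we have a derivation $D$ with $D|_{\mf{Ug}} = 0$, $D(x) = \omega x$ and $D(y) = \omega y$, where $\omega = \sum_{i\geq0} b_i(t)\Delta^i$ with $b_i\in k[t]$ and $t\in\mf{Z}(H)$ the central element $t_0$; recall $\mf{Z}(H) = k[t]$ by the first part of Theorem \ref{T1}. My first point would be that, apart from $[x,y] = 0$, every defining relation of $H_0 = \mf{U}(\mf{g}\ltimes V)$ either lies inside $\mf{Ug}$ or has the shape $[g,v] = v'$ with $g\in\mf{g}$ and $v,v'\in V$; since $\omega$ is $\ad\mf{g}$-invariant (both $\Delta$ and $t$ are, the latter because $z = 0$ makes $t$ central), the assignment $D$ respects all of these automatically. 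Hence $D$ extends to a derivation of $H$ exactly when it is compatible with $[x,y] = 0$.

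Imposing that single condition is the heart of the argument. Using the Leibniz rule together with $[x,y] = 0$,
\[ D([x,y]) = [\omega x,y] + [x,\omega y] = [\omega,y]x - [\omega,x]y = \sum_{i\geq1} b_i(t)\bigl([\Delta^i,y]x - [\Delta^i,x]y\bigr), \]
the last step because each $b_i(t)$ is central. By Lemma \ref{L5} with $z = 0$ — so that $t = ey^2 + hxy - fx^2$ — each bracket on the right equals $-2f_i(\Delta)\,t$, and therefore $D([x,y]) = -2t\sum_{i\geq1} b_i(t)f_i(\Delta)$. Since $H$ is an integral domain and $t\neq0$, compatibility with $[x,y] = 0$ amounts to $\sum_{i\geq1} b_i(t)f_i(\Delta) = 0$. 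I would then invoke that $t,\Delta$ are algebraically independent in $H$ (proof of Proposition \ref{P3}) and that $\{f_i(\Delta)\}_{i\geq1}$ is a $k$-basis of $k[\Delta]$ (Corollary \ref{C1}, as $\deg f_i = i-1$); these make $\{f_i(\Delta)\}_{i\geq1}$ a $k[t]$-basis of the free $k[t]$-module $k[t,\Delta]\subseteq H$, forcing $b_i = 0$ for all $i\geq1$. So $\omega = b_0(t)\in\mf{Z}(H)$, and $D = D_p$ with $p := b_0(t)$, where $D_p$ is defined by $D_p|_{\mf{Ug}} = 0$, $D_p(x) = px$, $D_p(y) = py$.

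A direct check (using that $p$ is central and $z = 0$) shows each $D_p$ is indeed a derivation, and $D_p = p\cdot E$ in the $\mf{Z}(H)$-module $Der(H)$, where $E$ is the Euler derivation ($E|_{\mf{Ug}} = 0$, $E(x) = x$, $E(y) = y$). Consequently $p\mapsto[\,pE\,]$ is a surjection of $\mf{Z}(H)$-modules $\mf{Z}(H)\twoheadrightarrow Der(H)/Inn(H)$, and it remains to prove it injective, i.e. that $pE$ inner forces $p = 0$. If $pE = \ad(w)$, then $w$ centralizes $\mf{Ug}$, so $w = \sum_{j\geq0} c_j(t)\Delta^j$ with $c_j\in k[t]$ by Proposition \ref{P3}(1); expanding $[w,x] = \sum_{j\geq1} c_j(t)[\Delta^j,x]$ with Proposition \ref{P1}, splitting into $V$-homogeneous components, cancelling the central factors $t^i$ (legitimate since $H$ is a domain), and finally separating by the power of $y$ — the same style of manipulation used in the proofs of Corollary \ref{C1} and Proposition \ref{P3} — forces $\sum_{j\geq1} c_j(t)f_j(\Delta) = 0$, hence $c_j = 0$ for $j\geq1$ exactly as above, so $w = c_0(t)\in\mf{Z}(H)$ and $px = [w,x] = 0$, i.e. $p = 0$. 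Therefore $Der(H)/Inn(H)\cong\mf{Z}(H)$ is free of rank one, generated by the class of the Euler derivation. I expect the main obstacle to be the bookkeeping in the two coefficient-comparison steps — deriving the scalar identity $\sum b_i(t)f_i(\Delta) = 0$ from Lemma \ref{L5}, and the analogous extraction from $[w,x] = px$; the module-theoretic packaging afterwards is routine.
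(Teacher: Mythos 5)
Your proposal is correct and follows essentially the same route as the paper: normalize $D$ so that $D(x)=\omega x$, $D(y)=\omega y$ with $\omega=\sum_i b_i(t)\Delta^i$, use the relation $[x,y]=0$ to force $b_i=0$ for $i\geq 1$, identify the remaining derivations as central multiples of the Euler derivation, and check these are not inner. The only (harmless) deviations are that you extract $\sum_{i\geq 1}b_i(t)f_i(\Delta)=0$ via Lemma \ref{L5} together with the algebraic independence of $t,\Delta$ and the basis property of the $f_i$ — exactly the mechanism the paper uses for the $z\neq 0$ case — instead of the paper's comparison of the coefficient of the top power of $y$ in $D(xy)=D(yx)$, and that you spell out the non-innerness of $pE$ (via Propositions \ref{P3}(1) and \ref{P1}) in more detail than the paper's brief remark.
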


\begin{proof}
Recall that $D(x)=\omega x,\ D(y)=\omega y$ and $\omega = \sum_i b_i(t)
\Delta^i$. We first claim that $b_i(t) = 0$ for $i>0$. Indeed, note that
\begin{eqnarray*}
D(xy) & = & \sum_i b_i(t)(\Delta^i xy + x \Delta^i y) = \sum_i b_i(t)( 2
\Delta^i xy - [\Delta^i, x] y),\\
D(yx) & = & \sum_i b_i(t)(\Delta^i yx + y \Delta^i x) = \sum_i b_i(t) (2
\Delta^i yx - [\Delta^i, y] x)
\end{eqnarray*}

\noindent and since $xy = yx$, hence one of the summands cancels
throughout, to give: $[\omega,x]y = [\omega,y]x$. Rewriting $\omega$ into
another different summation for convenience, we get an equation of the
form 
\[ \sum_{i=0}^m t^i [h_i(\Delta),x]y = \sum_{i=0}^m t^i [h_i(\Delta),
y]x. \]

Let $m$ be the highest index such that $h_m(\Delta)$ is not a constant.
We claim that this equation can not hold if $m>0$, since if we look at
the coefficient of $y^{2m+2}$, then the coefficient on the left side is
nonzero, whereas on the right side it is zero. This is a contradiction.

Thus we get $\omega = b(t) \in \mf{Z}(H)$, and $D(x) = \omega x$. We now
know the values of $D$ on generators, so using the Leibnitz rule, we can
now compute this map on all of $H$. Let us denote this map by $D_\omega$.
Since we have the PBW property (i.e., that $\mf{U}(\mf{g} \ltimes V)
\cong \mf{Ug} \otimes \Sym V$ as vector spaces), we observe that the map
$D_\omega$ is given by
\[ D_\omega(-) = n \omega \cdot - \mbox{, on } \mf{Ug} \otimes \Sym^n V\
\forall n \geq 0. \]

\noindent Moreover, it is not hard to verify that this defines a
derivation, using the PBW property again.

Finally, we verify that the map $: \mf{Z}(H) \to Der(H) / Inn(H)$,
sending $\omega \mapsto D_\omega$, is a vector space isomorphism, by
looking at $D_\omega(x)$, say (to verify linear independence). Hence
$H^1(H,H) = Der(H) / Inn(H) \cong \mf{Z}(H)$ as $\mf{Z}(H)$-modules, if
$[x,y] = 0$.
\end{proof}

Finally, we have the following proposition.

\begin{prop}
If $z \neq 0$, then every derivation of $H_z$ is inner.
\end{prop}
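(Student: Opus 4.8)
The plan is to reuse the reductions already made for the $z=0$ case and then extract additional information from the defining relation $[x,y]=z$. As before, since $\mf{g}$ is simple we may subtract an inner derivation so that $D\equiv 0$ on $\mf{Ug}$; then $D$ is a $\mf{g}$-module map, $D(x)$ is a maximal vector of weight $1$, and by Proposition \ref{P3} it has the form $D(x)=\sum_i b_i(t_z)\Delta^i x+\sum_i c_i(t_z)[\Delta^i,x]$. The second sum equals $\bigl[\sum_i c_i(t_z)\Delta^i,\,x\bigr]$, and $\sum_i c_i(t_z)\Delta^i$ commutes with all of $\mf{Ug}$ (as $t_z$ is central in $H_z$ and $\Delta$ in $\mf{Ug}$), so subtracting the corresponding inner derivation preserves $D|_{\mf{Ug}}=0$; hence we may assume $D(x)=\omega x$, and then (exactly as before, applying $D$ to $[e,y]=x$) $D(y)=\omega y$, where $\omega=\sum_{i\geq 0}b_i(t_z)\Delta^i$ lies in the polynomial subalgebra $k[t_z,\Delta]\subset H_z$ (a genuine polynomial ring, by Proposition \ref{P3}(1)). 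It suffices to prove $\omega=0$.

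Now I apply $D$ to $[x,y]=z$. Since $D(z)=0$ this yields $[\omega x,y]+[x,\omega y]=0$, and expanding (using that $t_z$ is central, so $[\omega,x]=\sum_i b_i(t_z)[\Delta^i,x]$ and $[\omega,y]=\sum_i b_i(t_z)[\Delta^i,y]$) this becomes $[\omega,x]\,y-[\omega,y]\,x=2\omega z$ in $H_z$. I then substitute Lemma \ref{L5}, together with the identity $ey^2+hxy-fx^2-\tfrac12 hz=t-\tfrac12 hz=t_z+q_z$ (where $q_z\in\mf{Z}(\mf{Ug})=k[\Delta]$ is the polynomial appearing in \eqref{Eqz}), to turn this into a polynomial identity
\[
2\,F\,(t_z+q_z)\;=\;(2B-G)\,z,\qquad
F:=\sum_i b_i(t_z)f_i(\Delta),\ \ G:=\sum_i b_i(t_z)g_i(\Delta),\ \ B:=\omega,
\]
valid in the commutative polynomial ring $k[t_z,\Delta]$.

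To finish I run a degree argument over $k(t_z)[\Delta]$, i.e.\ I regard $t_z$ as a scalar and compare degrees in $\Delta$. Let $N$ be the largest power of $\Delta$ in $\omega$, with coefficient $b_N(t_z)\neq 0$. If $N\geq 1$, then by Corollary \ref{C1} the polynomial $F$ has $\Delta$-degree $N-1$ with nonzero leading coefficient $2N\,b_N(t_z)$, while $2B-G$ has $\Delta$-degree exactly $N$ (no cancellation, since $\deg_\Delta g_i=i-1<i$) with leading coefficient $2b_N(t_z)$; and by Corollary \ref{C3} (with a direct check in the case $\deg z=0$) the factor $t_z+q_z$ has $\Delta$-degree $\deg z+1\geq 1$, its leading term coming from $q_z$ rather than $t_z$. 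Comparing degrees and leading coefficients of the two sides then forces a numerical relation of the shape $N=-(\deg z+1)$, which is absurd. Hence $N=0$, so $\omega=b_0(t_z)$; the identity collapses to $2b_0(t_z)z=0$, and since $z\neq 0$ and $k[t_z,\Delta]$ is a domain we get $b_0=0$. Thus $\omega=0$, so $D$ vanishes on $x$, on $y$ and on $\mf{Ug}$, i.e.\ $D=0$ — meaning the original derivation was inner.

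I expect the main obstacle to be making the final paragraph's bookkeeping watertight: one must verify that feeding Lemma \ref{L5} into $[\omega,x]y-[\omega,y]x=2\omega z$ really produces the clean identity above (the cancellations must be tracked carefully), and that in $k(t_z)[\Delta]$ the leading coefficient of $t_z+q_z$ is governed by $q_z$ and not by $t_z$, which is exactly where Corollary \ref{C3} is used. Everything else is a direct translation of the computations already carried out for the $z=0$ case, with the single new input $D(z)=0$.
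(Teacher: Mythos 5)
Your proof is correct and follows the paper's strategy: reduce to $D|_{\mf{Ug}}=0$ and $D(x)=\omega x$, $D(y)=\omega y$ with $\omega\in k[t_z,\Delta]$, apply $D$ to $[x,y]=z$ to get $2\omega z=[\omega,x]y-[\omega,y]x$, and feed in Lemma~\ref{L5}. The only divergence is in the final step: the paper notes that after substituting $t-\tfrac12 hz = t_z + q_z$, the right-hand side picks up an extra factor of $t_z$ relative to the left (coming from $b_i(t_z)f_i(\Delta)t_z$), so the top $t_z$-coefficient must vanish, yielding $\sum_j\beta_j f_j(\Delta)=0$ and hence all $\beta_j=0$ directly; you instead view the identity $2F(t_z+q_z)=(2B-G)z$ in $k[t_z,\Delta]$, work over $k(t_z)[\Delta]$, and compare leading $\Delta$-coefficients (both sides have $\Delta$-degree $N+m$, so the contradiction comes from $\tfrac{-N}{m+1}=1$). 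Both are valid; yours uses Corollary~\ref{C3} (the leading term of $q_z$) explicitly whereas the paper's shortcut avoids needing the exact coefficient of $q_z$, only its existence. One small caution: as stated, Corollary~\ref{C3} gives leading coefficient $\tfrac{-cm}{2(m+1)}$ (vanishing when $m=0$), which you correctly flag as needing a direct check for $\deg z=0$; a careful reading shows the leading coefficient should be $\tfrac{-c}{2(m+1)}$ regardless, so the check succeeds, and in any case the resulting numerical relation is absurd for every $m\geq 0$.
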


\begin{proof}
Note again that since we are working modulo $Inn(H_z)$, so that given a
derivation $D$, we assume $D$ kills $\mf{Ug}$ and $D(x) = \omega x,\ D(y)
= \omega y$ as above.

Let us write $D(x) = t_z^m h_m(\Delta) x + \sum_{0 \leq i < m} t_z^i
h_i(\Delta) x$. If we now pass to the associated graded algebra $\Gr H_z$
(under the usual filtration that assigns $V$ degree 1 and $\mf{g}$ degree
0), then we get a derivation of $\Gr H_z = \mf{U}(\mf{g} \ltimes V)$,
that sends $x$ to $t_z^m h_m(\Delta) x$. By the previous case, we may
assume without loss of generality that $h_m = 1$.

Similarly, $D(y) = t_z^m y + \sum_{i=0}^{m-1} t_z^i h_i(\Delta) y$.
Applying $D$  to $z$, we get
\begin{eqnarray*}
0 & = & D(z) = [Dx,y] + [x,Dy] = [\omega x, y] + [x, \omega y]\\
& = & 2 \omega [x,y] + [\omega, y]x - [\omega, x]y.
\end{eqnarray*}
We rearrange this to get
\[ 2 \omega z = [\omega, x]y - [\omega, y]x. \]

Let us rewrite $\omega = \sum_i b_i(t_z) \Delta^i$. Then using Lemma
\ref{L5}, we get
\begin{eqnarray*}
2 \omega z & = & \sum_i b_i(t_z) ([\Delta^i, y]x - [\Delta^i, x]y)\\
& = & \sum_i b_i(t_z) \left( 2 f_i(\Delta)(t - \half hz) + g_i(\Delta)z
\right).
\end{eqnarray*}

\noindent Also note, that $t_z = (t - \half hz) + \frac{1}{4}(\Delta z -
z + z_0)$, where $[z_0, x] = z[\Delta, x] - \Delta[z,x]$. Hence we
rewrite the above equation as
\begin{equation}\label{Eq3}
2 \omega z = 2\sum_i b_i(t_z) \left( f_i(\Delta)(t_z - \frac{1}{4}(\Delta
z - z + z_0)) + \half g_i(\Delta) z \right).
\end{equation}

Now look at the highest power of $t_z$ (or of $y$) in the equation, and
say the corresponding summand on the left side is $t_z^n \sum_j \beta_j
\Delta^j$, with $\beta_j \in k$. Then the corresponding expression on the
right side yields
\[ t_z^n \sum_j \beta_j \left( f_j(\Delta)(t_z - \frac{1}{4}(\Delta
z - z + z_0)) + \half g_j(\Delta) z \right). \]

Now note that there is an extra power of $t_z$ in this latter expression.
Therefore if we look at the highest power of $y$ that occurs in the right
side of equation \eqref{Eq3}, namely $y^{2n+2}$, then its coefficient
must be zero (since the corresponding coefficient on the left side is
zero). Since $t_z$ is central, this means that $\sum_j \beta_j
f_j(\Delta) = 0$. But the $f_j$'s form a basis of the center of
$\mf{Ug}$. Hence $\beta_j = 0$ for all $j$, whence $\omega$ must equal
zero too. We conclude that $D(x) = D(y) = D(\mf{g}) = 0$, and so $D = 0$
modulo $Inn(H_z)$, as claimed.
\end{proof}
This concludes the proof of Theorem \ref{T1}.\medskip

\subsection{Commutator quotient}

Next, we would like to determine the commutator quotient (or
abelianization) $H_z/[H_z,H_z]$ as a module over the center of $H_z$. At
first, let us consider the case $z=0$.

\begin{prop}\label{Pcenter}
The natural map from $\mf{Z}(\mf{Ug})$ to $H/[H,H]$ is an isomorphism,
and the action of the center of $H$ on its commutator quotient is
trivial.
\end{prop}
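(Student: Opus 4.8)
The plan is to identify $H/[H,H]$ explicitly as a vector space using the PBW decomposition $H = H_0 \cong \mf{Ug} \otimes \Sym V$, and then to show that the commutators cut things down precisely to $\mf{Z}(\mf{Ug})$. First I would observe that the commutator subspace $[H,H]$ is spanned by all elements of the form $[a,b]$, and in particular contains $[H, \mf{g}] = (\ad \mf{g}) H$, so that $H/[H,H]$ is a quotient of the space of $\mf{g}$-coinvariants of $H$; since $H$ is a direct sum of finite-dimensional $\mf{g}$-modules (Corollary \ref{C0}), the coinvariants are computed from the trivial isotypic components. So the first reduction is: $H/[H,H]$ is a quotient of $H^{\mf{g}}$ (or rather, its image), and by semisimplicity we may represent every class in $H/[H,H]$ by a $\mf{g}$-invariant element.

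Next I would use the computation of the $\mf{g}$-invariants of $H$ that is implicit in Proposition \ref{P3}(1): the centralizer of $\mf{Ug}$ in $H$ — which, being a sum of finite-dimensional $\mf{g}$-modules, coincides with $H^{\mf{g}}$ — is the polynomial algebra $k[\Delta, t]$. So every class in $H/[H,H]$ is represented by a polynomial in $\Delta$ and $t$. The key remaining point is then to kill the powers of $t$: I would show $t \equiv 0 \pmod{[H,H]}$, and more generally that every monomial $\Delta^j t^i$ with $i > 0$ lies in $[H,H]$, so that the map $\mf{Z}(\mf{Ug}) = k[\Delta] \to H/[H,H]$ is surjective. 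For this, recall from Lemma \ref{L1} (and the displayed computations preceding equation \eqref{Ereqd}) that $t = ey^2 + hxy - fx^2$ and that, for instance, $[e, ey^2 + hxy - fx^2]$-type commutators express $t$ (or $t$ times central elements) as explicit commutators; more directly, $xy - yx = 0$ in $H_0$ combined with $t^n - e^n y^{2n} \in H\cdot x$ from equation \eqref{Eeqn}, together with $[x, f x^{2n-1} \cdot(\text{stuff})]$-style identities, should let me write $\Delta^j t^i$ ($i\ge 1$) as a sum of commutators. The cleanest route is probably: $t \cdot (\text{anything}) = [x, (\text{something})] + (\text{lower order in } V)$ using \eqref{Ereqd} with $z=0$ (where $\omega = [x,t]$, and $t = [x,\,?]$ up to an element commuting less), then induct on the $V$-degree.

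For injectivity of $\mf{Z}(\mf{Ug}) \to H/[H,H]$, I would argue that a nonzero central $p(\Delta)$ cannot lie in $[H,H]$: using the $\mf{g}$-module grading, $[H,H]$ meets the trivial-isotypic part of $H$ in a subspace that, when expressed in the $k[\Delta,t]$ basis, contains only polynomials with no constant-in-$t$ part other than $0$ — this is exactly the content of the surjectivity computation run in reverse, since the commutators that produce $\Delta^j t^i$ ($i\ge 1$) are "independent" of those landing in pure powers of $\Delta$. Concretely, one can use a trace-type or evaluation argument: the one-dimensional representations of $H$ (or a suitable limit/specialization) separate the classes $\Delta^j \bmod [H,H]$; alternatively, since $\Gr H$ is a domain and $[H,H]$ is spanned by commutators each of which has principal symbol a commutator in $\Gr H = \mf{U}(\mf{g}\ltimes V)$, one reduces injectivity to the classical fact that $\mf{Z}(\mf{Ug}) \hookrightarrow \mf{U}(\mf{g}\ltimes V)/[\,\cdot,\cdot\,]$, which follows from the PBW theorem by comparing leading terms. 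Finally, for the last assertion — that $\mf{Z}(H) = k[t]$ acts trivially on $H/[H,H]$ — it suffices to show $t \cdot H \subseteq [H,H]$; but $t \equiv 0 \pmod{[H,H]}$ and $t$ is central, so $t a \equiv 0$ would need $t a - 0 \in [H,H]$, which I would get from $ta = [x, ?]\cdot(\text{correction})$ as above, or more slickly: for central $t$, $t[H,H] \subseteq [H,H]$ automatically, and $t \cdot (\text{coset rep } \Delta^j) = (t\Delta^j)$-coset $= 0$ since $t\Delta^j \in [H,H]$; by surjectivity every coset has such a representative, so multiplication by $t$ is zero on $H/[H,H]$.

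\textbf{Main obstacle.} The crux is the explicit verification that $\Delta^j t^i \in [H,H]$ for $i \ge 1$ (equivalently, that $t$ and its central multiples are commutators), and the matching injectivity statement that no nonzero polynomial purely in $\Delta$ is a commutator. Both hinge on controlling how commutators interact with the $V$-degree filtration and the PBW basis; I expect the surjectivity half to be a manageable induction using \eqref{Eeqn} and \eqref{Ereqd}, while the injectivity half is best handled by passing to $\Gr H = \mf{U}(\mf{g}\ltimes V)$ and invoking PBW, so that the real work is bookkeeping rather than a genuinely new idea.
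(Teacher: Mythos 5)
Your proposal is correct in outline but takes a genuinely different route from the paper. You first take $\mf{g}$-coinvariants of $H$ (reducing, via Proposition \ref{P3}(1), to the polynomial ring $k[\Delta,t]$) and then try to show that every monomial $\Delta^j t^i$ with $i\ge 1$ is a commutator. The paper reverses the order: it first uses the standard identification of Hochschild $H_0$ with Lie coinvariants, $H/[H,H]=(H/[H,V])^{\mf g}$, then proves $H/[H,V]=\mf{Ug}$ by showing $HV\subset[H,V]$ via Lemma \ref{Lfilt} (namely $\mf{Ug}\cdot V=[\mf{Ug},V]$), and finishes with the classical fact $\mf{Ug}^{\mf g}=\mf{Z}(\mf{Ug})$. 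The paper's route is shorter because the crucial input is just Lemma \ref{Lfilt} and a textbook fact; it never needs the (much heavier) computation of $H^{\mf{g}}=k[\Delta,t]$. Moreover, the injectivity comes for free from the clean isomorphism $H/[H,V]\cong\mf{Ug}$, whereas in your approach it has to be argued separately.

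Two concrete points you should tighten. First, the surjectivity step ($t\in[H,H]$): equation \eqref{Ereqd} with $z=0$ reduces to $[x,t]=0$ and gives you nothing, and \eqref{Eeqn} is about $H\cdot x$ rather than commutators, so the tools you cite aren't the ones that do the job. What actually works is: $t\equiv 3hxy\pmod{[H,H]}$ (by $[f,exy]=ey^2-hxy$ and $[e,fxy]=hxy+fx^2$), and then $hxy\in[H,H]$ because $hx\in[\mf{Ug},V]$ by Lemma \ref{Lfilt}, say $hx=[a,x]+[b,y]$, whence $hxy=[ay,x]+[by,y]$ (using $[x,y]=0$). So you end up needing precisely the content of Lemma \ref{Lfilt} anyway. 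Second, your injectivity argument is the weakest part: passing to "$\Gr H$'' does nothing here (already $z=0$, so $\Gr H=H$ in the relevant filtration), and "compare leading terms via PBW'' is not a complete argument — the space $k[\Delta,t]\cap[H,H]$ has no a priori reason to be saturated in the leading-term sense. The cleanest fix is the trace argument mentioned in the proof of Corollary \ref{C2}: each $\Delta^j$ has nonzero trace on some finite-dimensional $\mf{Ug}$-module, and these inflate to $H$-modules (let $V$ act by $0$), so the $\Delta^j$ stay linearly independent modulo $[H,H]$.

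Your argument for the last assertion (triviality of the $\mf{Z}(H)$-action) is fine and matches the paper's in spirit, since both come down to $t\in HV^2\subset[H,V]\subset[H,H]$ and centrality of $t$.
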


We need a small lemma for this, which is also used later.

\begin{lemma}\label{Lfilt}
Inside any $H_z$, we have $\mf{Ug} \cdot V = [\mf{Ug}, V]$. More
precisely, in terms of the standard filtration on $\mf{Ug}$,
$F^n \mf{Ug} \cdot V = [F^{n+1} \mf{Ug}, V]\ \forall n \geq 0$.
\end{lemma}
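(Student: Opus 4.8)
The claim is that $F^n\mathfrak{Ug}\cdot V = [F^{n+1}\mathfrak{Ug}, V]$ inside any $H_z$, where $F^\bullet$ is the standard (PBW) filtration on $\mathfrak{Ug}$. The containment $[F^{n+1}\mathfrak{Ug},V]\subseteq F^n\mathfrak{Ug}\cdot V$ is the easy direction: for $u\in F^{n+1}\mathfrak{Ug}$ and $v\in V$, the commutator $[u,v] = uv - vu$; since $H_z$ is a deformation of $\mathfrak{Ug}\ltimes\Sym V$ with $V$ in filtration degree $1$ and $\mathfrak{g}$ in degree $0$, moving $v$ past $u$ introduces lower-order corrections, so $[u,v]$ lands in $F^n\mathfrak{Ug}\cdot V + (\text{terms of lower }V\text{-degree})$; but the lower $V$-degree terms live in $\mathfrak{Ug}$ itself (degree $0$ in $V$), and on reflection one checks these are absorbed — more cleanly, I would argue $[\mathfrak{g}^{(k)},V]\subseteq \mathfrak{g}^{(k-1)}\cdot V$ by a direct induction on $k$ using that $[\,\cdot\,,v]$ is a derivation and $[\mathfrak{g},V]\subseteq V$ (since $V$ is a $\mathfrak{g}$-module). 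This immediately gives $[F^{n+1}\mathfrak{Ug},V]\subseteq F^n\mathfrak{Ug}\cdot V$.

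For the reverse containment $F^n\mathfrak{Ug}\cdot V\subseteq [F^{n+1}\mathfrak{Ug},V]$, I would induct on $n$. For $n=0$: given $v\in V$, I want $v\in[F^1\mathfrak{Ug},V]=[\mathfrak{g},V]+[k,V]$, i.e. $v\in[\mathfrak{g},V]$. Since $V=kx\oplus ky$ and $[h,x]=x$, $[h,y]=-y$, indeed $x=[h,x]$ and $y=-[h,y]$, so $V=[\mathfrak{h},V]\subseteq[\mathfrak{g},V]=[F^1\mathfrak{Ug},V]$. For the inductive step, take $u\in F^n\mathfrak{Ug}$, which we may assume is a PBW monomial $g_1\cdots g_n$ with $g_i\in\mathfrak{g}$, and $v\in V$. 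Write $u = g_1 u'$ with $u'\in F^{n-1}\mathfrak{Ug}$. Then consider $[g_1 u', v']$ for a suitable $v'$: expanding via the derivation property, $[g_1 u', v'] = g_1[u',v'] + [g_1,v']u'$ — wait, that's not a derivation identity in the right variable; rather $[\,\cdot\,,v']$ is a derivation, so $[g_1u',v'] = [g_1,v']u' + g_1[u',v']$. The second term $g_1[u',v']$: by induction $[F^n\mathfrak{Ug},V] = F^{n-1}\mathfrak{Ug}\cdot V$, so choosing $v'$ with $[u',v']$ spanning what we need, $g_1\cdot(F^{n-1}\mathfrak{Ug}\cdot V) = F^n\mathfrak{Ug}\cdot V$ modulo the "error" $[g_1,v']u'\in V\cdot F^{n-1}\mathfrak{Ug}$; this error has $V$-degree $1$ but the $\mathfrak{Ug}$-part only $F^{n-1}$, so it sits in $F^{n-1}\mathfrak{Ug}\cdot V$ (after reordering, again with lower-order corrections), which is $\subseteq[F^n\mathfrak{Ug},V]\subseteq[F^{n+1}\mathfrak{Ug},V]$ by induction. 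Running this carefully and combining, every element of $F^n\mathfrak{Ug}\cdot V$ is realized in $[F^{n+1}\mathfrak{Ug},V]$.

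A cleaner packaging: prove both inclusions at once by showing that in the associated graded $\Gr H_z = \mathfrak{Ug}\otimes\Sym V$, the map $[\,\cdot\,,-]: \Gr^{n+1}(\mathfrak{Ug})\otimes V\to \Gr^n(\mathfrak{Ug})\otimes V$ induced by the bracket is \emph{surjective} onto $\Gr^n\mathfrak{Ug}\cdot V$ — indeed on $\Gr$ the bracket of $\bar u\in\Sym^{n+1}\mathfrak{g}$ with $v$ is (up to scalars) the Poisson bracket, and since $V$ is a faithful... actually the concrete point is just that $[h,\cdot]$ acts invertibly (diagonalizably with nonzero eigenvalues $\pm1$) on $V$, so already $[\mathfrak{h}\cdot\Gr^n\mathfrak{Ug},V]$ contains $\Gr^n\mathfrak{Ug}\cdot V$. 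Then lift from $\Gr$ to $H_z$ by the standard filtered-to-graded argument (subtract a lift of the principal symbol and induct on filtration degree), using that $H_z$ is exhausted by the $F^n\mathfrak{Ug}\cdot\Sym V$.

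\textbf{Main obstacle.} The delicate point is bookkeeping the lower-order correction terms: when one commutes a degree-$(n+1)$ element of $\mathfrak{Ug}$ past an element of $V$ inside $H_z$ (not inside the graded object), one picks up terms of strictly smaller filtration degree in $\mathfrak{Ug}$ — and one must make sure these stay inside $F^n\mathfrak{Ug}\cdot V$ and do not escape into, say, $\mathfrak{Ug}$ (pure $V$-degree $0$) or lower. The honest fix is to do the whole argument by a double induction — on $n$ and on $V$-degree — reducing everything to the graded statement where $[h,\cdot]$ invertibility on $V$ settles it cleanly; everything else is routine.
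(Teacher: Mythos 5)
Your core idea --- pass to the picture modulo $F^{n-1}\mathfrak{Ug}\otimes V$ and exploit that $\ad h$ acts on $V$ with nonzero eigenvalues $\pm 1$ --- is the same one that drives the paper's proof, which records $[e^ih^jf^k,x]$ and $[e^ih^jf^k,y]$ explicitly modulo $F^{n-1}\mathfrak{Ug}\otimes V$ and solves for $\alpha\otimes x$ and $\alpha\otimes y$. But the ``cleaner packaging'' is not as immediate as you assert. In the graded (Poisson) picture, for $\alpha\in\Sym^n\mathfrak g$ and $v\in V$ an $h$-eigenvector of eigenvalue $\lambda\in\{\pm 1\}$, the Leibniz rule gives $\{h\alpha,v\}=\lambda\,\alpha v + h\{\alpha,v\}$, and the error $h\{\alpha,v\}$ lies in $h\cdot\Sym^{n-1}\mathfrak g\cdot V$, which has the \emph{same} total $\Sym\mathfrak g$-degree $n$. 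So invertibility of $\ad h$ on $V$ does not ``already'' give $\Sym^n\mathfrak g\cdot V\subseteq\{h\Sym^n\mathfrak g,V\}$; a further induction is needed to absorb that error. Your proposed repair --- a ``double induction on $n$ and on $V$-degree'' --- does not supply it: the $V$-degree throughout $\mathfrak{Ug}\cdot V$ is identically $1$, so that variable never moves. The variable that actually decreases is the degree of $\alpha$ in $e,f$ (equivalently, $n$ minus the power of $h$ already extracted), since the error picks up one more factor of $h$ at each step. This is exactly what the paper tracks: applying its $x$-formula to $e^ih^jf^k x$ leaves an error proportional to $k\,e^i h^{j+1}f^{k-1}y$, then its $y$-formula leaves an error proportional to $i\,e^{i-1}h^{j+2}f^{k-1}x$, and so on, so $i+k$ drops by one each time and the process terminates once $i$ or $k$ hits zero.

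Your first, ungraded attempt is also circular as written: after writing $u'v=\sum_i[\beta_i,w_i]$ with $\beta_i\in F^{n}\mathfrak{Ug}$ and expanding $g_1[\beta_i,w_i]=[g_1\beta_i,w_i]-[g_1,w_i]\beta_i$, the error $[g_1,w_i]\beta_i$ reorders into $F^{n}\mathfrak{Ug}\cdot V$ (modulo $F^{n-1}\mathfrak{Ug}\cdot V$), which is precisely what one is trying to show lies in $[F^{n+1}\mathfrak{Ug},V]$. You sensibly defer to the graded version, but the graded version inherits the same problem one level down, and the missing ingredient in both is the secondary induction on the $e,f$-degree of $\alpha$ described above.
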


\begin{proof}
The second statement (for all $n$) implies the first; we will show both
inclusions for the latter claim. One way is easy: $[F^{n+1} \mf{Ug}, V]
\subset F^n \mf{Ug} \cdot V$ using induction on $n$.

For the other inclusion, we proceed by induction on $n$. Let $\alpha \in
F^n \mf{Ug}$; we want to show that $\alpha \otimes V \in [F^{n+1}
\mf{Ug}, V]$. When $n=0$, we are done since $[\mf{g},V]=V$, so it
suffices to show that
\[ \alpha \otimes x \in [F^{n+1} \mf{Ug}, V] \mod F^{n-1} \mf{Ug} \otimes
V. \]

\noindent But we have
\begin{eqnarray*}
{[} h^n,x] & \equiv & nh^{n-1}x \mod F^{n-1} \mf{Ug} \otimes V,\\
{[} h^n,y] & \equiv & -nh^{n-1}y \mod F^{n-1} \mf{Ug} \otimes V,\\
{[} f^n,x] & \equiv & nf^{n-1}y \mod F^{n-1} \mf{Ug} \otimes V,\\
{[} e^n,y] & \equiv & ne^{n-1}x \mod F^{n-1} \mf{Ug} \otimes V,
\end{eqnarray*}
so
\begin{eqnarray*}
{[}e^i h^j f^k, x] & \equiv & j e^i h^{j-1} f^k x + k e^i h^j f^{k-1} y
\mod F^{n-1} \mf{Ug} \otimes V,\\
{[}e^i h^j f^k, y] & \equiv & i e^{i-1} h^j f^k x - j e^i h^{j-1} f^k y
\mod F^{n-1} \mf{Ug} \otimes V.
\end{eqnarray*}

Now assume without loss of generality that $\alpha = e^i h^j f^k$, with
$i+j+k=n$. Then
\begin{eqnarray*}
\alpha \otimes x & \equiv & \frac{1}{j+1} [e^i h^{j+1} f^k, x] -
\frac{k}{j+1} e^i h^{j+1} f^{k-1} y \mod F^{n-1} \mf{Ug} \otimes V,\\
\alpha \otimes y & \equiv & \frac{-1}{j+1} [e^i h^{j+1} f^k, y] +
\frac{i}{j+1} e^{i-1} h^{j+1} f^k x \mod F^{n-1} \mf{Ug} \otimes V.
\end{eqnarray*}

\noindent We thus repeatedly (alternately) apply these two identities to
assume that either $i$ or $k$ becomes zero (in $\alpha$). Applying
(possibly both of) them once more, we are done.
\end{proof}

\begin{proof}[Proof of Proposition \ref{Pcenter}]
Since $H= \mf{U}(\mf{g} \ltimes V)$, from the relation between Lie
algebra homology and Hochschild homology, we get that
\begin{equation}\label{Ecoh}
H/[H,H] = H/[H,\mf{g} \ltimes V] = (H/[H,V])^{\mf{g}}.
\end{equation}

\noindent We now claim that $H/[H,V] = \mf{Ug}$, which would imply that $H
/ [H,H] = (\mf{Ug})^{\mf{g}} = \mf{Z}(\mf{Ug})$, as desired. Indeed,
obviously $\mf{Ug}$ injects into $H/[H,V]$, so we just need to
demonstrate that $HV \subset [H,V]$.

Clearly, $[H,V]$ is a right module over $\Sym V$, so it suffices to show
that $[\mf{Ug},V] \supset \mf{Ug} \otimes V$. But this was shown in Lemma
\ref{Lfilt} above.

Now, since the generating central element of $H$ lies in $HV^2 (\subset
[H,V])$, it must act trivially on $H/[H,H]$, which concludes the proof.
\end{proof}


\begin{cor}\label{C2}
For any $z,\ \mf{Z}(\mf{Ug})\cong \mf{Ug} / [\mf{Ug}, \mf{Ug}]$ surjects
onto $H_z / [H_z, H_z]$. Every $X \in F^n \mf{Ug}$ is equivalent to some
$X' \in F^n \mf{Ug} \cap \mf{Z}(\mf{Ug})$ modulo $[\mf{Ug}, \mf{Ug}]$ or
$[H_z, H_z]$.
\end{cor}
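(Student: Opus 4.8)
The plan is to deduce both assertions from the $z=0$ picture together with Lemma \ref{Lfilt}, transferring everything through the standard filtration on $H_z$ (degree $1$ on $V$, degree $0$ on $\mf{Ug}$), whose associated graded is $H = H_0 = \mf{U}(\mf{g}\ltimes V)$. First I would observe that the composite $\mf{Ug}\hookrightarrow H_z\twoheadrightarrow H_z/[H_z,H_z]$ is surjective: since $V$ has filtration degree $1$ while $\mf{Ug}$ sits in degree $0$, the quotient $H_z/(\mf{Ug}+[H_z,H_z])$ is spanned by (images of) monomials involving $V$, and Lemma \ref{Lfilt} shows $\mf{Ug}\cdot V=[\mf{Ug},V]\subset[H_z,H_z]$; since $[H_z,H_z]$ is a right $\Sym V$-module, one gets $H_z\cdot V\subset[H_z,H_z]$ exactly as in the proof of Proposition \ref{Pcenter}, so the image of $\mf{Ug}$ already fills up $H_z/[H_z,H_z]$. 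Next, composing with $\mf{Z}(\mf{Ug})\cong\mf{Ug}/[\mf{Ug},\mf{Ug}]$ (which holds since $\mf{Ug}$ is a direct sum of finite-dimensional $\ad\mf{g}$-modules and the coinvariants of such a module are its invariants), we get the claimed surjection $\mf{Z}(\mf{Ug})\twoheadrightarrow H_z/[H_z,H_z]$.

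For the second, sharper assertion I would argue by induction on $n$, the filtration degree in $\mf{Ug}$. Given $X\in F^n\mf{Ug}$, decompose its image in $\mf{Ug}/[\mf{Ug},\mf{Ug}]\cong\mf{Z}(\mf{Ug})$; concretely, write the principal symbol of $X$ in $\Gr\mf{Ug}=\Sym\mf{g}$ and use that $\Sym\mf{g}=\mf{Z}(\Gr)\oplus[\Sym\mf{g},\mf{g}]$-type splitting (equivalently, pick the $\ad\mf{g}$-isotypic decomposition) to find $X_n'\in F^n\mf{Ug}\cap\mf{Z}(\mf{Ug})$ and a commutator term so that $X-X_n'\in[\mf{Ug},\mf{Ug}]+F^{n-1}\mf{Ug}$. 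The commutator term lies in $[\mf{Ug},\mf{Ug}]\subset[H_z,H_z]$, and one applies the inductive hypothesis to the $F^{n-1}$-piece; the base case $n=0$ is trivial. Collecting the central pieces at each stage gives the desired $X'\in F^n\mf{Ug}\cap\mf{Z}(\mf{Ug})$ with $X-X'\in[\mf{Ug},\mf{Ug}]$ (and a fortiori in $[H_z,H_z]$).

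The only point requiring care — and the place I expect to do actual bookkeeping — is the filtered lifting step: I want the representative $X_n'$ at level $n$ to stay inside $F^n\mf{Ug}$, i.e. not to increase filtration degree when I correct by a commutator in $\mf{Ug}$. This is where I would use that the relevant identities $[e^ih^jf^k,\cdot]$ and the $\ad\mf{g}$-module structure are \emph{filtered}: the splitting $\Sym\mf{g}=(\Sym\mf{g})^{\mf{g}}\oplus\mf{g}\cdot\Sym\mf{g}$ in each homogeneous degree lifts to a compatible statement for $F^n\mf{Ug}$ because the standard filtration is $\ad\mf{g}$-stable and $\mf{Ug}$ is a sum of finite-dimensional $\ad\mf{g}$-modules. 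Granting that, the induction goes through cleanly, and the corollary follows.
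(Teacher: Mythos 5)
Your argument for the second assertion is essentially the paper's, just phrased through principal symbols in $\Sym\mf{g}$ rather than decomposing the finite-dimensional $\ad\mf{g}$-module $M_n = F^n\mf{Ug}$ directly; the paper simply notes $M_n/[\mf{g},M_n]\cong\mf{Z}(\mf{Ug})\cap M_n$ by complete reducibility, which is shorter but not meaningfully different.

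For the first assertion, however, there is a genuine gap. You claim that ``$[H_z,H_z]$ is a right $\Sym V$-module, so one gets $H_z\cdot V\subset[H_z,H_z]$ exactly as in the proof of Proposition~\ref{Pcenter}.'' The trick in Proposition~\ref{Pcenter} uses the identity $[a,v]\cdot b=[ab,v]$ for $b\in\Sym V$, which requires $b$ to commute with $v$; this holds in $H_0$ because $\Sym V$ is a genuine commutative subalgebra there. For $z\neq 0$ the elements of $V$ no longer commute in $H_z$ ($[x,y]=z$), so $\Sym V$ is not a subalgebra and the module-structure claim does not even make sense. And the conclusion $H_z\cdot V\subset[H_z,H_z]$ is actually false for $z\neq 0$: for instance $hxy\in\mf{Ug}\cdot V^2$, yet Lemma~\ref{Lstar} together with Proposition~\ref{Pstep} shows that $3hxy\equiv t_z+q_z+\tfrac{1}{2}hz\equiv$ a \emph{nonzero} polynomial in $\Delta$ modulo $[H_z,H_z]$, so $hxy\notin[H_z,H_z]$. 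What is true, and what is actually needed, is only $H_z=\mf{Ug}+[H_z,H_z]$; the Lie identity gives $[a,v]w=[aw,v]+a[v,w]$ with $a[v,w]\in\mf{Ug}$, so the degree-$2$ terms in $V$ land in $[H_z,H_z]$ only modulo $\mf{Ug}$. The paper handles this cleanly by a filtration induction: from the $z=0$ result $\mf{Ug}\twoheadrightarrow H/[H,H]$ one deduces $\mf{Ug}\twoheadrightarrow\Gr(H_z/[H_z,H_z])$, and then lifts step by step, writing each $a\in F^nH_z$ as $c+\sum_i[a_i,b_i]+(\text{lower filtration})$ with $c\in\mf{Ug}$. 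You should replace your direct ``$H_z\cdot V\subset[H_z,H_z]$'' argument with such a lifting argument (or at least weaken the claim and track the $\mf{Ug}$-remainders carefully).

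One further small point: passing from ``coinvariants $=$ invariants'' to $\mf{Z}(\mf{Ug})\cong\mf{Ug}/[\mf{Ug},\mf{Ug}]$ also uses that $[\mf{Ug},\mf{Ug}]=[\mf{g},\mf{Ug}]$ and that no nonzero polynomial in $\Delta$ lies in the commutator; the paper records the latter via a trace argument on finite-dimensional modules, which your write-up leaves implicit.
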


\begin{proof}
We make many statements here. The first equality comes from the fact that
$\mf{Ug}$ is a direct sum of finite-dimensional $\mf{sl}_2$-modules
(e.g., by Lemma \ref{L0} with $V = \mf{h}' = 0$), whence the images of
$\ad e$ and $\ad f$ span a complement to the center (using weight
vectors). Moreover, no polynomial in the Casimir is in the commutator,
since one can always find a finite-dimensional $\mf{Ug}$-module on which
it has nonzero trace.

Now for the surjection: we first claim that $\mf{Ug}$ surjects onto the
abelianization of $H_z$. Indeed, the main step in showing this is the
$z=0$ case, which is the proposition above: $\mf{Ug} \twoheadrightarrow
\mf{Z}(\mf{Ug}) \mapdef{\sim} H / [H,H]$. But this implies that $\mf{Ug}$
surjects onto the associated graded of the abelianization of $H_z$, since
$H / [H,H] \twoheadrightarrow \Gr(H_z / [H_z,H_z])$.

So we just need to show that this can be ``lifted" to a surjection as
desired. Now given $a \in F^n H_z$ (for the usual filtration on $H_z$),
we can find $c \in \mf{Ug}$ and $a_i, b_i \in H_z$ such that the
filtration degrees of $a_i, b_i$ always add up to at most $n$, and
$\overline{a} = \overline{c} + \sum_i [\overline{a_i}, \overline{b_i}]$
in the associated graded, from above. But then $a - c - \sum_i [a_i, b_i]
\in F^{n-1} H_z$, and we can proceed by induction.

Finally, $\mf{Ug} \hookrightarrow H_z$, so $[\mf{Ug}, \mf{Ug}]$ is killed
by the map : $\mf{Ug} \twoheadrightarrow H_z / [H_z, H_z]$. Hence
$\mf{Z}(\mf{Ug})$ surjects onto $H_z / [H_z, H_z]$.

Next, we show the last statement. Consider the finite-dimensional
$\mf{Ug}$-submodule $M_n := F^n \mf{Ug} \subset \mf{Ug}$, and its
submodule $[\mf{g}, M_n] \subset M_n$. Clearly, $M_n / [\mf{g}, M_n]$
surjects onto the image of $M_n$ modulo $[\mf{Ug}, \mf{Ug}]$ or $[H_z,
H_z]$; on the other hand, $M_n / [\mf{g}, M_n]$ is isomorphic to
$\mf{Z}(\mf{Ug}) \cap M_n$ by complete reducibility. We are done.
\end{proof}

\noindent Thus we need to compute the kernel (which obviously contains at
least $z$).\medskip

As an aside, we note that equation \eqref{Ecoh} holds for general $z$:

\begin{lemma}
$H_z/[H_z,H_z]=(H_z/[H_z,V])^{\mf{g}}$.
\end{lemma}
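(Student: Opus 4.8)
The plan is to reuse, essentially verbatim, the computation \eqref{Ecoh} that established the case $z = 0$ inside the proof of Proposition \ref{Pcenter}: nothing there used $z = 0$ beyond the facts that $H_z$ is generated (as an algebra) by $\mf{g} \oplus V$ and that $H_z$ is a direct sum of finite-dimensional $\mf{g}$-modules under the adjoint action (Corollary \ref{C0}).

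First I would record the general fact that for an associative $k$-algebra $A$ generated by a subspace $S$, the commutator subspace $[A,A]$ equals the $k$-span of $\{[a,s] : a \in A,\ s \in S\}$; this follows by induction on word length in the generators from the identity $[a, bs] = [ab, s] + [sa, b]$. Applied to $A = H_z$ with $S = \mf{g} \oplus V$, this gives $[H_z, H_z] = [H_z, \mf{g}] + [H_z, V]$, so that $H_z/[H_z,H_z] = H_z/([H_z,\mf{g}] + [H_z,V])$.

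Next I would note that $[H_z, V]$ is an $\ad\mf{g}$-submodule of $H_z$, since $[\xi, [a,v]] = [[\xi,a],v] + [a,[\xi,v]]$ and $[\xi,v] \in V$ for $\xi \in \mf{g}$, $v \in V$. Hence the adjoint action descends to $M := H_z/[H_z,V]$, and the image of $[H_z,\mf{g}]$ in $M$ is precisely $[\mf{g},M]$; therefore $H_z/[H_z,H_z] = M/[\mf{g},M] = M_{\mf{g}}$, the space of $\mf{g}$-coinvariants. Finally, $M$, being a quotient of $H_z$, is likewise a direct sum of finite-dimensional (hence completely reducible) $\mf{g}$-modules by Corollary \ref{C0}; for such a module one has $M = M^{\mf{g}} \oplus [\mf{g},M]$, so the natural map $M^{\mf{g}} \to M_{\mf{g}}$ is an isomorphism. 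Composing these identifications shows that the composite $(H_z/[H_z,V])^{\mf{g}} \hookrightarrow H_z/[H_z,V] \twoheadrightarrow H_z/[H_z,H_z]$ is an isomorphism, which is the claim.

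I do not anticipate a genuine obstacle here. The only step that is not purely formal is the spanning lemma $[A,A] = k\text{-span}\{[a,s] : a \in A,\ s \in S\}$, and even that is standard; the real content of the statement is simply the observation that the $z = 0$ argument never used $z = 0$.
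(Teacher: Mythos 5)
Your proof is correct, but it takes a genuinely different route from the paper. The paper proves the identity homologically: it writes down the right-exact sequence of $H_z$-bimodules $H_z \otimes (V \oplus \mf{g}) \otimes H_z \to H_z \otimes H_z \to H_z \to 0$, checks exactness in the middle by passing to the associated graded (where one is back in the known case $H = \mf{U}(\mf{g} \ltimes V)$), and then tensors with $H_z$ over the enveloping algebra of bimodules to compute $H_z/[H_z,H_z] = Tor_0(H_z,H_z) = H_z/[H_z, V \oplus \mf{g}]$, after which the passage to $(H_z/[H_z,V])^{\mf{g}}$ is the same invariants-equal-coinvariants step you use. You instead obtain $[H_z,H_z] = [H_z,\mf{g}] + [H_z,V]$ directly from the elementary identity $[a,bs] = [ab,s] + [sa,b]$ and induction on word length in the generators $\mf{g} \oplus V$, then conclude via complete reducibility (Corollary \ref{C0}). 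Your spanning lemma is sound (and is in effect an elementary proof of the fact the paper imports for $z=0$ from the relation between Lie algebra homology and Hochschild homology), so your argument is complete; note only that your framing remark that ``the $z=0$ argument never used $z=0$'' slightly misdescribes the paper, whose $z=0$ proof does lean on $H_0$ being an enveloping algebra. What each approach buys: yours is shorter, avoids the filtration/PBW input entirely, and works verbatim for any algebra generated by $\mf{g} \oplus V$ on which $\ad \mf{g}$ acts completely reducibly; the paper's bimodule-presentation argument is the one that scales to further homological information (e.g.\ higher Hochschild groups and Morita-type considerations alluded to after Theorem \ref{Tzz}), since it exhibits $H_z$ as a filtered deformation at the level of resolutions rather than just at degree zero.
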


\begin{proof} 
Consider the following sequence of $H_z$-bimodules:
\[ H_z \otimes (V \oplus \mf{g}) \otimes H_z \to H_z\otimes H_z \to H_z
\to 0 \]

\noindent where the last map is the multiplication map, and the first map
is given by $w \mapsto 1 \otimes w - w \otimes 1$ for any $w \in V \oplus
\mf{g}$. We claim that this sequence is right exact. Indeed, we only need
to verify exactness of the middle term. But all terms of this sequence
are naturally filtered, and after passing to the associated graded
picture, we will get an analogous sequence for $H = \mf{U}(\mf{g} \ltimes
V)$, for which the sequence is well known to be exact. But since
$H_z/[H_z,H_z] = Tor_0(H_z,H_z)$ in the category of $H_z$-bimodules,
after tensoring our sequence with $H_z$ we get that
\[ H_z/[H_z,H_z] = H_z/[H_z,V \oplus \mf{g}] = (H_z/[H_z,V])^{\mf{g}} \]

\noindent and we are done.
\end{proof}\medskip

We finally have the following theorem.

\begin{theorem}\label{Tzz}
Let the parameter $z$ be nonzero, say $z = c \Delta^m + l.o.t.$. If $m=0$
(i.e., $z$ is a constant), then the commutator quotient of $H_z$ is
trivial. Otherwise, if $\deg z = m \geq 1$, then $1, \Delta, ...,
\Delta^{m-1}$ are linearly independent in $H_z/[H_z,H_z]$, and generate
it as a module over the center of $H_z$.
(In particular, $(H_z/[H_z,H_z])/(t_z)$ is a vector space of dimension
$m$ over $k$.)
\end{theorem}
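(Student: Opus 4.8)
The plan is to pin down the kernel of the surjection $\mf{Z}(\mf{Ug}) \twoheadrightarrow H_z/[H_z,H_z]$ furnished by Corollary \ref{C2}, and then to compute the action of $t_z$ on the quotient. The case $m=0$ is immediate: then $z$ is a nonzero scalar, and since $z=[x,y]\in[H_z,H_z]$ the unit is killed, so $H_z/[H_z,H_z]=0$. Assume now $m=\deg z\geq 1$, and write $z=c\Delta^m+\text{l.o.t.}$ The surjection $k[\Delta]=\mf{Z}(\mf{Ug})\twoheadrightarrow H_z/[H_z,H_z]$ of Corollary \ref{C2} sends $z=[x,y]$ to $0$, so $c\,\overline{\Delta^m}$ equals a $k$-combination of $\bar 1,\dots,\overline{\Delta^{m-1}}$, and inductively every $\overline{\Delta^j}$ with $j\geq m$ is such a combination; thus $\bar 1,\dots,\overline{\Delta^{m-1}}$ span $H_z/[H_z,H_z]$ over $k$, hence a fortiori generate it over the center, and $\dim_k H_z/[H_z,H_z]\leq m$.

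The substance is that $\bar 1,\dots,\overline{\Delta^{m-1}}$ are $k$-linearly independent, i.e.\ that $[H_z,H_z]\cap\mf{Z}(\mf{Ug})$ contains no nonzero polynomial in $\Delta$ of degree $<m$. I would attack this through the explicit commutator calculus of Proposition \ref{P1} and Lemma \ref{L5}. From $z=[x,y]$ one obtains $\Delta^n z=[\Delta^n x,y]-[\Delta^n,y]x$, and combining this with Lemma \ref{L5} (which rewrites $[\Delta^n,x]y-[\Delta^n,y]x$ in terms of $f_n(\Delta)$, $g_n(\Delta)$, $t-\half hz$ and $z$) yields, for each $n$, the identity
\[ \bigl(2\Delta^n-g_n(\Delta)\bigr)\,z \;=\; 2f_n(\Delta)\bigl(t-\tfrac12 hz\bigr)\;+\;[x,\Delta^n y]\;-\;[y,\Delta^n x]. \]
Now, given $p(\Delta)\in[H_z,H_z]\cap\mf{Z}(\mf{Ug})$, write $p(\Delta)=\sum_i[a_i,b_i]$ and induct on the maximal $V$-filtration degree $D=\max_i(\deg_V a_i+\deg_V b_i)$ of such a presentation. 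If $D=0$ then $p(\Delta)\in[\mf{Ug},\mf{Ug}]\cap\mf{Z}(\mf{Ug})=0$ by Corollary \ref{C2}. If $D>0$, the top $V$-degree piece $\sum[\bar a_i,\bar b_i]$ must vanish in $\Gr H_z=H=\mf{U}(\mf{g}\ltimes V)$ (a polynomial in $\Delta$ has $V$-degree $0$), and the discrepancy between $[x,y]=z$ in $H_z$ and $[x,y]=0$ in $H$ means that, lifting this relation back to $H_z$, one lowers $D$ at the cost of adding a term lying in $z\cdot\mf{Z}(\mf{Ug})$; by the displayed identity together with Lemma \ref{Lfilt} (which presents $\mf{Ug}\cdot V$ as $[\mf{Ug},V]$, used to absorb the $V$-degree-$1$ and $V$-degree-$2$ parts of $f_n(\Delta)(t-\half hz)$ into $[H_z,H_z]$) such correction terms are always multiples of $z$ of degree $\geq m$, so no polynomial of degree $<m$ is ever produced. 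Pushing this through forces $p=0$ when $\deg p<m$, whence $\dim_k H_z/[H_z,H_z]=m$ with basis $\bar 1,\dots,\overline{\Delta^{m-1}}$.

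Finally, for the parenthetical statement I would check that (after the scalar adjustment of $t_z$ permitted in its definition) $t_z$ annihilates $H_z/[H_z,H_z]$, so that $(t_z)\cdot(H_z/[H_z,H_z])=0$ and $(H_z/[H_z,H_z])/(t_z)$ equals $H_z/[H_z,H_z]$, of dimension $m$. Writing $t_z=(ey^2+hxy-fx^2)-\half hz-q_z$ with $q_z\in\mf{Z}(\mf{Ug})$ as in \eqref{Eqz}, one has $hz=[e,fz]\in[\mf{Ug},\mf{Ug}]$, while $ey^2=[efy,x]+efz$, its image $-x^2f=[xef,y]-efz$ under the anti-involution $j$, and $xy=[ey,y]$ give $ey^2\equiv hxy\equiv efz$ and $fx^2\equiv -efz-z$ modulo $[H_z,H_z]$; feeding these, together with the displayed identity, into a computation of $t_z\,p(\Delta)$ for each $p$ reduces it modulo $[H_z,H_z]$ to an element of $z\cdot\mf{Z}(\mf{Ug})$, which lies in $[H_z,H_z]$ by the previous paragraph, so $t_z$ kills every $\overline{p(\Delta)}$.

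The main obstacle is the middle step — the precise determination of $[H_z,H_z]\cap\mf{Z}(\mf{Ug})$ — and inside it the $V$-degree bookkeeping in the reduction: one must show that lowering the $V$-filtration degree of a commutator presentation only ever introduces correction terms that are multiples of $z$ (and of high enough degree), so that a nonzero polynomial of degree $<m$ can never appear in $[H_z,H_z]$. This is exactly where Lemma \ref{L5}, Lemma \ref{Lfilt}, and the recursions for $f_n$ and $g_n$ do the real work, and the same machinery is what makes the verification of the $t_z$-action go through.
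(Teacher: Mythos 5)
Your argument breaks at its very first substantive step, and the break propagates through everything that follows. From $z=[x,y]\in[H_z,H_z]$ you conclude inductively that every $\overline{\Delta^j}$ with $j\geq m$ is a $k$-combination of $\bar 1,\dots,\overline{\Delta^{m-1}}$, so that $\dim_k H_z/[H_z,H_z]\leq m$. This would require $\Delta^j z\in[H_z,H_z]$ for all $j$, i.e.\ that the kernel of the linear map $\mf{Z}(\mf{Ug})\to H_z/[H_z,H_z]$ is an \emph{ideal} of $\mf{Z}(\mf{Ug})$. But $[H_z,H_z]$ is only a $k$-subspace, stable under multiplication by the center of $H_z$ (namely $k[t_z]$), not by $\Delta$: since $\Delta$ is not central in $H_z$, one has $\Delta[x,y]\neq[\Delta x,y]$, and indeed $\Delta^j z=[\Delta^j x,y]-[\Delta^j,y]x$, where the second term is not a commutator. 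Reducing it via Lemma \ref{L5} reintroduces $f_j(\Delta)\,(t-\half hz)$, i.e.\ the quadratic part of $t_z$ -- and the whole content of the theorem is that this term does \emph{not} disappear: by the paper's Proposition \ref{Pstep}, $t_z^a\Delta^b$ is congruent mod $[H_z,H_z]$ to a \emph{nonzero} polynomial in $\Delta$ of degree $a(m+1)+b$, and the powers $\Delta^j$, $j>m$, are expressed through $t_z,t_z\Delta,\dots$ rather than through $k$-combinations of $1,\dots,\Delta^{m-1}$. That is exactly why the theorem asserts generation over the center $k[t_z]$ and only the quotient $(H_z/[H_z,H_z])/(t_z)$ has dimension $m$; the paper even expects the commutator quotient to be free of rank $m$ over $k[t_z]$, hence infinite-dimensional over $k$, directly against your claim. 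For the same reason your last paragraph, where $t_z$ is claimed to annihilate the quotient because the leftover term lies in $z\cdot\mf{Z}(\mf{Ug})$ ``which lies in $[H_z,H_z]$ by the previous paragraph,'' is circular: $z\cdot\mf{Z}(\mf{Ug})\subseteq[H_z,H_z]$ was never established and is not expected to hold.

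What your proposal does get right is the linear-independence half, and there your degree bookkeeping is essentially the paper's own (much shorter) observation: any drop in $V$-filtration degree inside a commutator produces a multiple of $z$, so $[H_z,H_z]\cap\mf{Ug}\subseteq z\,\mf{Ug}+[\mf{Ug},\mf{Ug}]$, and composing with the surjection onto $\mf{Z}(\mf{Ug})/z\,\mf{Z}(\mf{Ug})$ gives independence of $1,\dots,\Delta^{m-1}$. The missing piece -- and the real work of the theorem -- is the spanning statement over the center: one must show that $3hxy\Delta^n$ (equivalently $t_z\Delta^n$) reduces, modulo $[H_z,H_z]$, to a polynomial in $\Delta$ of the precise degree $m+n+1$ with nonzero leading coefficient, which the paper does via Lemmas \ref{Lfilt} and \ref{Lstar}, Corollaries \ref{C2} and \ref{C3}, and the $f_n,g_n$ recursions; only then can one march up the powers of $\Delta$ using $z/c=[x/c,y]$, $t_z\Delta^b$, $[x/c,t_zy]$, and so on. Without an argument of this kind your proof establishes neither the spanning over $k[t_z]$ nor the parenthetical dimension count.
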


An important first step in showing this, is the following proposition.

\begin{prop}\label{Pstep}
For all $a,b \geq 0$, $t_z^a \Delta^b$ equals a (nonzero) polynomial in
$\Delta$ of degree $a(m+1) + b$, modulo $[H_z, H_z]$.
\end{prop}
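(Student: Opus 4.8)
The plan is to induct on $a$, using the explicit description of the generating central element $t_z$ and the commutator quotient machinery established above. The base case $a = 0$ is immediate: $\Delta^b$ is itself a polynomial in $\Delta$ of degree $b$, and it is nonzero in $H_z/[H_z,H_z]$ since no nonzero polynomial in the Casimir lies in $[\mf{Ug},\mf{Ug}]$ (as noted in the proof of Corollary \ref{C2}, one exhibits a finite-dimensional module on which it has nonzero trace) and the natural map $\mf{Ug}/[\mf{Ug},\mf{Ug}] \to H_z/[H_z,H_z]$ need not be injective, so one must instead argue via the associated graded, where $\Gr H_z = \mf{U}(\mf{g} \ltimes V)$ and $H/[H,H] \cong \mf{Z}(\mf{Ug})$ by Proposition \ref{Pcenter}; the image of $\Delta^b$ there is nonzero, hence it is nonzero in $H_z/[H_z,H_z]$ itself.

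For the inductive step, write $t_z^a \Delta^b = t_z \cdot (t_z^{a-1}\Delta^b)$ and use the explicit formula $t_z = t - \half hz + \frac14(\Delta z - z + z_0)$ from just before Section \ref{S2}'s ``Various centralizers'' subsection, where $t = ey^2 + hxy - fx^2$. The point is that $t$, and hence $t_z$, lies in $HV^2 \subset \mf{Ug}\cdot V^2$, so that $t_z^a \Delta^b$ has filtration degree $2a$ in $V$. Modulo $[H_z,H_z]$ one wants to ``push $t_z$ down'' onto the $\mf{Ug}$-part: using Lemma \ref{Lfilt} ($F^n\mf{Ug}\cdot V = [F^{n+1}\mf{Ug}, V]$) together with the fact that $[H_z,H_z]$ is stable under right multiplication by $\Sym V$, any element of $\mf{Ug}\cdot V$ is a commutator, so repeatedly trading the two factors of $x$ (or $y$) in each monomial of $t$ for commutators reduces $t_z^a\Delta^b$ modulo $[H_z,H_z]$ to an element of $\mf{Ug}$ of filtration degree $2a$, and then by Corollary \ref{C2} further to an element of $\mf{Z}(\mf{Ug})\cap F^{2a}\mf{Ug}$, i.e.\ a polynomial in $\Delta$ of degree $\le a$ times $\Delta^b$-ish terms — but this only gives the degree bound $a(m+1)+b$ from \emph{above} after tracking how the Casimir $z = c\Delta^m + l.o.t.$ enters. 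To get the degree exactly right, the cleanest route is to pass to the associated graded ring $\Gr H_z = \mf{U}(\mf{g}\ltimes V)$: there $t_z$ has principal symbol $t = ey^2 + \dots$ (the other terms of $t_z$ have strictly lower $V$-degree or involve $z$ which is lower order), and one computes the image of $t$ in $H/[H,H] = \mf{Z}(\mf{Ug})$. Using equation \eqref{Eeqn}, $t^n \equiv e^n y^{2n} \pmod{H\cdot x}$; combined with the reduction $\mf{Ug}\cdot V = [\mf{Ug},V]$ one finds the class of $t^n$ in $\mf{Z}(\mf{Ug})$ to be $c' \Delta^n + l.o.t.$ for a nonzero constant $c'$. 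Feeding back the precise shape of $z$, one obtains that $t_z^a\Delta^b$ reduces modulo $[H_z,H_z]$ to a polynomial in $\Delta$ whose top degree is exactly $a(m+1)+b$ with nonzero leading coefficient, and nonzeroness survives the passage from $\Gr$ back to $H_z$ because the associated graded of $[H_z,H_z]$ is contained in $[\Gr H_z, \Gr H_z]$.

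The main obstacle is pinning down the \emph{exact} top degree $a(m+1)+b$ (rather than merely $\le$), since $t_z$ is a sum of terms of mixed behaviour: $t$ contributes $V$-degree $2$ with ``Casimir-weight'' $1$, while the correction terms $\frac14(\Delta z - z + z_0)$ involve $\Delta z$ of $\Delta$-degree $m+1$ but $V$-degree $0$, and $z_0 = z_0(\Delta,[x,y])$ is central of $\Delta$-degree roughly $m+1$ by Proposition \ref{Pzz}. One must check that when $t_z$ is reduced to $\mf{Z}(\mf{Ug})$ modulo $[H_z,H_z]$, all these pieces land in $\Delta$-degree $\le m+1$ and that the $\Delta^{m+1}$-coefficients combine to something \emph{nonzero} — a cancellation here would be fatal. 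I expect this to come down to a single leading-coefficient computation: the class of $t$ in $\mf{Z}(\mf{Ug})$ contributes $c'\Delta$ worth of ``Casimir'', multiplied into $z$'s leading $c\Delta^m$ it would give degree $m+1$, and the $\frac14\Delta z$ correction gives $\frac{c}{4}\Delta^{m+1}$; one checks these do not cancel (indeed, they should reinforce, consistent with Corollary \ref{C3}'s computation of $q_z$). Once the $a=1$ case is nailed down with nonzero leading coefficient $\lambda \ne 0$, the general case follows by induction since $t_z^a\Delta^b \equiv \lambda \cdot t_z^{a-1}\Delta^{b+m+1} + l.o.t. \pmod{[H_z,H_z]}$, using that $t_z$ is central so multiplication by it is compatible with the reduction, and the inductive hypothesis handles $t_z^{a-1}\Delta^{b+m+1}$.
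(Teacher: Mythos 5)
Your high-level plan (base case $a=0$, then induct on $a$ by reducing to the $a=1$ statement, exploiting centrality of $t_z$) matches the paper, and your remark that the inductive step goes through because a commutator identity can be multiplied by $t_z$ to give another is exactly what the paper does. However, the core of the $a=1$ case — the actual reduction of $t_z\Delta^n$ to an element of $\mf{Z}(\mf{Ug})$ modulo $[H_z,H_z]$ with the right $\Delta$-degree — is where your argument breaks down.

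The key error is your claim that, passing to $\Gr H_z = H_0$, the class of $t^n$ in $H_0/[H_0,H_0] \cong \mf{Z}(\mf{Ug})$ is $c'\Delta^n + l.o.t.$ with $c' \neq 0$. This is false: as shown in the proof of Proposition \ref{Pcenter}, $t = ey^2 + hxy - fx^2 \in HV^2 \subset [H,V]$ when $z=0$, so the class of $t$ (and hence of $t^n$ for $n \geq 1$) in $H_0/[H_0,H_0]$ is \emph{zero}. Thus the passage to the associated graded cannot detect the nonvanishing you need, and the "leading-coefficient computation" you sketch is built on a false premise. The whole point is that the deformation $[x,y] = z$ with $z \neq 0$ is what keeps $t_z^a\Delta^b$ from dying in the abelianization; the $(m+1)$ per power of $t_z$ in the degree count comes precisely from the factor of $z = c\Delta^m + l.o.t.$ that is produced each time you trade two $V$-factors for an element of $\mf{Ug}$. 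Concretely, the mechanism the paper uses is: if $d = [\alpha,x]+[\beta,y]$ with $\alpha,\beta \in \mf{Ug}$, then $dx \equiv -\beta z$ and $dy \equiv \alpha z$ modulo $[H_z,H_z]$ (Lemma \ref{Lstar}(1)); this, combined with the equivalences $z'ey^2 \equiv z'hxy \equiv -z'fx^2$ (Lemma \ref{Lstar}(2)) and the explicit identity $f_n\cdot(hx+2ey) = [\Delta^n,x] - g_n x$, lets one compute $\Delta^n(3hxy) \equiv \frac{c}{2n+2}\Delta^{m+n+1} + l.o.t.$ exactly, after which the $\Delta^n q_z$ correction (Corollary \ref{C3}) is seen to reinforce rather than cancel. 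Your proposal never isolates this $z$-producing step, and without it there is no way to get the required lower bound on the degree, let alone the nonvanishing of the leading coefficient.
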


\begin{proof}
The case $a=0$ is obvious; we will show the $a=1$ case below. The case of
higher $a$ is then proved by induction on $a$: for a fixed $b$, if $t_z^a
\Delta^b - p_{ab}(\Delta) = \sum_i [r_i, s_i] \in [H_z, H_z]$, then
\[ t_z^{a+1} \Delta^b = t_z p_{ab}(\Delta) + t_z \sum_i [r_i, s_i] = t_z
p_{ab}(\Delta) + \sum_i [t_z r_i, s_i] \]

\noindent and $t_z p_{ab}(\Delta)$ can be rewritten appropriately, using
the $a=1$ statement (for various $b$).

It remains to show the hypothesis for $a=1$ and all $b$. In the rest of
the proof, we will use the following result several times.

\begin{lemma}\label{Lstar}\hfill
\begin{enumerate}
\item Let $d = [\alpha,x] + [\beta,y]$, with $\alpha, \beta \in \mf{Ug}$.
Then modulo $[H_z, H_z],\ dx \equiv -\beta z,\ dy \equiv \alpha z$.

\item For any $z' \in \mf{Z}(\mf{Ug}),\ z'ey^2 \equiv z' hxy \equiv -z'
fx^2 \mod [H_z, H_z]$.
\end{enumerate}
\end{lemma}

\begin{proof}\hfill
\begin{enumerate}
\item We have $dx = [\alpha x, x] + [\beta x, y] - \beta z$, and $dy =
[\alpha y, x] + [\beta y, y] + \alpha z$. Both claims now follow.

\item Note that $[f, z'exy] = -z'hxy + z'ey^2$, which proves the first
equality; for the second, apply the anti-involution $j$. We note that $j$
fixes the Casimir element, and hence the whole center. Applying $j$ to
the above equation, $-xyhz' - x^2 fz' = A \in [H_z, H_z]$, say. Hence we
make the following reductions:
\begin{eqnarray*}
-z'fx^2 & = & -fz'x^2 = -x^2 fz' + [x^2, fz'] = [x^2, fz'] + A + xyhz',\\
xyhz' & = & z' xyh + [xyh, z'] = z'hxy + [xyh, z']\ (\mbox{since } xy
\mbox{ has weight } 0).
\end{eqnarray*}

Thus, $-z'fx^2 \equiv z'hxy \mod [H_z, H_z]$, as claimed.
\end{enumerate}
\end{proof}

We now prove the result for $t_z \Delta^n$ ($n \geq 0$). Since $t_z =
(ey^2 + hxy - fx^2) - \half hz - q_z$ (see equation \eqref{Eqz}), and
since $hz = [e,fz] \in [\mf{g},H_z]$, we have
\begin{equation}\label{Ezz}
t_z \Delta^n \equiv \Delta^n t_z \equiv \Delta^n (3 hxy - q_z) \mod [H_z,
H_z].
\end{equation}

By Lemma \ref{Lfilt}, $\Delta^n hx \in \mf{Ug} \cdot V = [\mf{Ug}, V]$ is
of the form $[a_n, x] + [b_n, y]$ for some $a_n, b_n \in \mf{Ug}$. By
Corollary \ref{C2}, we may assume that $a_n, b_n \in \mf{Z}(\mf{Ug}) \cap
F^{2n+1} \mf{Ug}$ (modulo the commutator). By Lemmma \ref{Lstar}, $3
\Delta^n hxy \equiv 3 a_n z \mod [H_z, H_z]$.

We thus have to prove (using equation \eqref{Ezz}) that $3 a_n z -
\Delta^n q_z$ is a polynomial of degree $n+m+1$ in $\Delta$. In light of
Corollary \ref{C3}, it suffices to show that $a_n$ is a polynomial of
degree $n+1$ with positive (rational) top coefficient (in fact, it turns
out to be $1/6(n+1)$).\medskip

To do this, consider the formula for $[\Delta^n, x]$, which yields: $f_n
\cdot (hx + 2ey) = [\Delta^n, x] - g_n x$. Again using Lemma \ref{Lfilt}
and Corollary \ref{C2}, write $g_n x = [c_n, x] + [c'_n, y]$ for $c_n,
c'_n$ polynomials in $\Delta$. Moreover, since $\deg(g_n(T)) = n-1$,
$c_n, c'_n \in F^{2n-1}  \mf{Ug}$; thus, $\deg(c_n) < n$ (as a polynomial
in $\Delta$).

But then Lemma \ref{Lstar} implies that on the one hand,
\[ f_n (hx + 2ey)y \equiv f_n (hxy + 2ey^2) \equiv f_n (3hxy) \mod [H_z,
H_z] \]

\noindent and on the other (modulo the commutator),
\[ f_n (hx + 2ey)y \equiv (\Delta^n - c_n)z \equiv c \Delta^{m+n} +
l.o.t.. \]

We thus get: $f_n (3hxy) \equiv c \Delta^{m+n} + l.o.t.$ for all $n$.
Using the ``unipotent" (with positive coefficient $1/(2n)$) change of
basis from $f_n$ to $\Delta^n$,we get
\[ \Delta^n (3hxy) = \left( \frac{1}{2n+2} f_{n+1} + ``l.o.t." \right)
(3hxy) = \frac{c}{2n+2} \Delta^{m+n+1} + l.o.t. \]

\noindent where $``l.o.t."$ stands for ``lower-degree" $f_i$'s. Now
compare this to what we had above:
\[ \Delta^n (3hxy) \equiv 3 a_n z = a_n (3c \Delta^m + l.o.t.), \]

\noindent and we are done.
\end{proof}

\begin{proof}[Proof of Theorem \ref{Tzz}]
First of all we have $[H_z,H_z] \cap \mf{Ug} \subseteq z \cdot \mf{Ug} +
[\mf{Ug},\mf{Ug}]$ (since any time the filtration degree in $x,y$ goes
down in a commutator expression, a multiple of $z$ appears). Since
$\mf{Ug} / (\mf{Ug} \cap [H_z, H_z]) \subseteq H_z / [H_z, H_z]$, and
$\mf{Ug} / (\mf{Ug} \cap [H_z, H_z])$ surjects onto
$\mf{Ug} /(z \cdot \mf{Ug} + [\mf{Ug},\mf{Ug}]) = \mf{Z}(\mf{Ug}) / z
\cdot \mf{Z}(\mf{Ug})$,
hence the elements $1,...,\Delta^{m-1}$ are linearly independent in $H_z
/ [H_z, H_z]$.

It remains to show that the following elements span $H_z / [H_z, H_z]$ -
or in light of Corollary \ref{C2}, the center of $\mf{Ug}$: $\{ t_z^a
\Delta^b : a \geq 0,\ 0 \leq b < m \}$. We now show that all $\Delta^n$
lie in this span, modulo $[H_z, H_z]$. Clearly, $1, \dots, \Delta^{m-1}$
as well as $\Delta^m = (1/c)z - l.o.t.$ are in this span, since $z/c =
[x/c,y]$. Next, $\Delta^{m+1}, \dots, \Delta^{2m}$ are in the span: just
consider $t_z, t_z \Delta, \dots, t_z \Delta^{m-1}$. As for
$\Delta^{2m+1}$, we have
\[ \Delta^{2m+1} \equiv t_z z + l.o.t. \equiv [x/c, t_z y] + l.o.t. \mod
[H_z, H_z] \]

\noindent similar to above. Keep repeating this procedure.
\end{proof}

We expect that a stronger statement is true: namely, that the commutator
quotient is actually a free module over the center, with basis $1,
\Delta, \dots, \Delta^{m-1}$. This would imply (via Hochschild cohomology
considerations) that the algebras $H_{z_1}/(t_{z_1}-a),
H_{z_2}/(t_{z_2}-b)$ are not Morita equivalent if $\deg z_1 \neq \deg
z_2$, where $a,b\in k$.\medskip

\section{Infinitesimal Hecke algebra of $\mf{gl}_n$}
 
We now recall the definition of an infinitesimal Hecke algebra of
$\mf{g}=\mf{gl}_n$ and $V=\mf{h}\oplus \mf{h}^{*}$, where $\mf{h} = k^n$
and $\mf{h}^*$ is its dual representation. We (again) identify $\mf{g}$
with $\mf{g}^*$ via the pairing $\mf{g} \times \mf{g} \to k :\ (A,B)
\mapsto \tr(AB)$, and identify $\mf{Ug}$ with $\Sym \mf{g}$ via the
symmetrization map.

Then for any $x \in \mf{h}^{*},\ y \in \mf{h},\ A \in \mf{g}$, one writes 
\[ (x, (1 - T A)^{-1} y) \det (1 - T A)^{-1} = r_0(x,y)(A) + r_1(x,y)(A)
T + \dots \]

\noindent where $r_i(x,y)$ is a polynomial function on $\mf{g}$, for all
$i$.

Now for each polynomial $\beta = \beta_0 + \beta_1 T + \beta_2 T^2 +
\dots \in k[T]$, the authors define in \cite{EGG} the algebra $H_\beta$
as a quotient of $T(\mf{h} \oplus \mf{h}^*) \rtimes \mf{Ug}$ by the
relations
\[ [x,x'] = 0,\qquad [y,y'] = 0,\qquad [y,x] = \beta_0 r_0(x,y) + \beta_1
r_1(x,y) + \dots \]

\noindent for all $x,x'\in\mf{h}^{*},\ y,y'\in\mf{h}$. It is proved in
\cite{EGG} that these algebras are infinitesimal Hecke algebras. Also
note that if $\beta \equiv 0$, then $H_0 = \mf{U}(\mf{gl}_n \ltimes
(\mf{h} \oplus \mf{h}^*))$.

\subsection{Relations and anti-involution}

We start with an explicit presentation of $H_\beta$: it is generated by
$\mf{gl}_n = \bigoplus_{i,j} k e_{ij}$ and $\mf{h} = \bigoplus_i k v_i,\
\mf{h}^* = \bigoplus_i k v_i^*$, where $\{ v_i \}, \{ v_i^* \}$ form dual
bases of $\mf{h}, \mf{h}^*$ respectively. We have the relations:
\[ e_{ij} \cdot v_k := \delta_{jk} v_i, \qquad e_{ij} \cdot v_k^* := -
\delta_{ik} v_j^*, \qquad v_i^*(v_j) = \delta_{ij}. \]

We next describe an anti-involution of $H_\beta$, for (at most) linear
$\beta$. Suppose we have $j$ sending $e_\alpha \leftrightarrow f_\alpha$
and $h \leftrightarrow h$ for all positive simple roots $\alpha$ for a
reductive Lie algebra $\mf{g}$ (and Cartan subalgebra elements $h$). One
then checks that this gives an anti-involution $j$ of $\mf{g}$ (and hence
of $\mf{Ug}$).

Now let $\mf{g} = \mf{gl}_n$; then $j(X) = X^T$ in $\mf{g}$. We now
mention the anti-involution.

\begin{lemma}
The map $j : (X, v_i) \leftrightarrow (X^T, -v_i^*)$ extends to an
anti-involution of $\mf{Ug} \ltimes T(\mf{h} \oplus \mf{h}^*)$.
Moreover, $j$ factors to an anti-involution of $H_\beta$ when $\beta$ is
at most linear.
\end{lemma}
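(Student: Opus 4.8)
The plan is to build $j$ up in stages: first on $\mf{Ug}$, then on the free tensor algebra $T(\mf{h}\oplus\mf{h}^*)$, then on the smash product $\mf{Ug}\ltimes T(\mf{h}\oplus\mf{h}^*)$, and finally to check that the resulting anti-involution descends to $H_\beta$ when $\beta$ is at most linear. On $\mf{g}=\mf{gl}_n$ the transpose $X\mapsto X^T$ satisfies $[X,Y]^T=-[X^T,Y^T]$, so it is a Lie-algebra anti-automorphism; it extends uniquely to an anti-automorphism $j$ of $\mf{Ug}$ of order two, and it coincides with the simple-root prescription $e_\alpha\leftrightarrow f_\alpha$, $h\leftrightarrow h$ mentioned above. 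On $T(\mf{h}\oplus\mf{h}^*)$ there are no relations to respect, so setting $j(v_i):=-v_i^*$ and (forced by $j^2=\Id$) $j(v_i^*):=-v_i$ and reversing the order of tensor factors gives an anti-automorphism of order two there as well.

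Next I would check that these two pieces are compatible with the cross relations $[X,v]=X\cdot v$ of the smash product; since $j$ is already an anti-automorphism on each of the two factors, this is all that needs verifying, and it is enough to do it on the generators $X=e_{ij}$ and $v=v_k$ or $v_k^*$. Using $j(e_{ij})=e_{ji}$ together with $e_{ij}\cdot v_k=\delta_{jk}v_i$ and $e_{ij}\cdot v_k^*=-\delta_{ik}v_j^*$, one checks that $[jX,jv]=-j(X\cdot v)$ in each case (both sides being $\delta_{jk}v_i^*$, resp.\ $-\delta_{ik}v_j$). Conceptually this is the statement that the ``swap with a sign'' map $\mf{h}\leftrightarrow\mf{h}^*$ intertwines the standard and dual $\mf{gl}_n$-actions through the transpose anti-automorphism. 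Once this holds, $j$ extends to an anti-automorphism of $\mf{Ug}\ltimes T(\mf{h}\oplus\mf{h}^*)$, and since $j^2$ is then an automorphism fixing every generator, $j^2=\Id$.

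Finally I would show $j$ preserves the two-sided ideal $I$ cut out by the defining relations. The relations $[x,x']=0$ and $[y,y']=0$ are interchanged by $j$ (it swaps $\mf{h}$ and $\mf{h}^*$ and reverses products), so they pose no problem. For the deformed relation, applying $j$ to $[v_i,v_k^*]=\sum_m\beta_m r_m(v_k^*,v_i)$ and using $j([v_i,v_k^*])=[v_k,v_i^*]\equiv\sum_m\beta_m r_m(v_i^*,v_k)\pmod I$ reduces everything to the identity $j(r_m(v_k^*,v_i))=r_m(v_i^*,v_k)$ in $\mf{Ug}$ for $m=0,1$. Expanding $(v_k^*,(1-TA)^{-1}v_i)\det(1-TA)^{-1}$ to order $T$ gives $r_0(v_k^*,v_i)=\delta_{ki}$ and $r_1(v_k^*,v_i)(A)=A_{ki}+\delta_{ki}\tr(A)$; identifying polynomial functions on $\mf{g}$ with $\Sym\mf{g}\cong\mf{Ug}$ via the trace form (so that $A\mapsto A_{ki}$ becomes $e_{ik}$ and $A\mapsto\tr A$ becomes $\sum_j e_{jj}$) gives $r_1(v_k^*,v_i)=e_{ik}+\delta_{ki}\sum_j e_{jj}$. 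Since $j(e_{ik})=e_{ki}$ and $j$ fixes $\sum_j e_{jj}$, both identities are immediate, so $j(I)\subseteq I$, hence $j(I)=I$, and $j$ descends to an anti-involution of $H_\beta$.

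The routine parts are the first two stages; the only genuinely computational point — and the one I would expect to be the main obstacle — is the last step, identifying $r_0$ and $r_1$ as explicit elements of $\mf{Ug}$ through the trace pairing and noticing that they are symmetric under the combined operation ``interchange the two arguments and apply the transpose.'' This is exactly the symmetry that fails for $r_m$ with $m\ge 2$ (which involve $A^m$ and the symmetrization of higher-degree elements), and that is why the anti-involution is only asserted for $\beta$ at most linear.
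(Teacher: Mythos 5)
Your proof is correct and follows the same route as the paper: check $j$ on the commutation relations defining $\mf{Ug}\ltimes T(\mf{h}\oplus\mf{h}^*)$, then check it preserves the ideal defining $H_\beta$. The only difference is that where the paper simply cites \cite[Examples 4.6, 4.7]{EGG} for the explicit form $[v_i,v_j^*]=\delta_{ij}(\beta_0+\beta_1\tau)+\beta_1 e_{ij}$, you rederive $r_0$ and $r_1$ from the generating function and the trace-form identification; your computation matches that relation, and your closing remark about why the argument breaks down at $m\ge 2$ is a correct explanation of the restriction to linear $\beta$.
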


\begin{proof}
For the first part, we only need to check that $j$ preserves (actually,
permutes) the following relations:
\[ [e_{ij}, e_{kl}] = \delta_{kj} e_{il} - \delta_{il} e_{kj}, \quad [
e_{ij}, v_k] = \delta_{jk} v_i, \quad [e_{ji}, v_k^*] = - \delta_{jk}
v_i^*\ \forall i,j,k,l. \]

\noindent This is easy to do. Next, for $H_\beta$ with $\beta$ at most
linear, we refer to \cite[Examples 4.6, 4.7]{EGG}; thus, $H_\beta$ is the
quotient of the above algebra, by the relations
\[ [v_i, v_j] = [v_i^*, v_j^*] = 0, \qquad
[v_i, v_j^*] = \delta_{ij} (\beta_0 + \beta_1 \tau) + \beta_1 e_{ij}\
\forall i,j, \]

\noindent where $\tau = \Id_n \in \mf{gl}_n$. That $j$ preserves these
relations, is is also easy to verify.
\end{proof}

\subsection{Central elements}

We now mention discuss central elements for various $\beta$ (and general
$n$). We first have a result for $\beta \equiv 0$, which can be verified
using a strategy similar to the proof of Proposition \ref{Psp}.

\begin{prop}
The center of $H_0(\mf{gl}_n)$ contains at least two algebraically
independent elements, both fixed by $j$:
\[ r_n := \sum_{i=1}^n v_i v_i^*,\ s_n := \sum_{1 \leq p < q \leq n}
(e_{pq} v_q v_p^* + e_{qp} v_p v_q^*) - (e_{pp} v_q v_q^* + e_{qq} v_p
v_p^*). \qed \]
\end{prop}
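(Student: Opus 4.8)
The plan is to verify directly that $r_n$ and $s_n$ commute with each generator of $H_0(\mf{gl}_n)$, namely with all $e_{ij}\in\mf{gl}_n$, with all $v_k\in\mf{h}$, and with all $v_k^*\in\mf{h}^*$; since $[v_i,v_j]=[v_i^*,v_j^*]=[v_i,v_j^*]=0$ in the undeformed algebra, the commutator computations are governed entirely by the action of $\mf{gl}_n$ on $\mf{h}\oplus\mf{h}^*$ together with the adjoint action on $\mf{gl}_n$. Just as in Proposition \ref{Psp}, I would exploit the anti-involution $j$ to cut the work roughly in half: once I know that $j$ fixes $r_n$ and $s_n$ and that (say) $[v_k, r_n]=[v_k,s_n]=0$ for all $k$, applying $j$ (which sends $v_k\mapsto -v_k^*$) yields $[v_k^*, r_n]=[v_k^*,s_n]=0$ automatically. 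So the first step is the easy observation that $j$ fixes both elements: for $r_n$ this is immediate since $j(v_iv_i^*)=j(v_i^*)j(v_i)=v_iv_i^*$ (the two factors commute), and for $s_n$ one checks that $j$ swaps the summand indexed by $(p,q)$ with itself, using $j(e_{pq})=e_{qp}$ and the commutativity of the $v$'s.

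Next I would handle the $\mf{gl}_n$-invariance. For $r_n=\sum_i v_iv_i^*$, note that $\sum_i v_i\otimes v_i^*$ is (up to the canonical identification) the image of $\Id\in\End(\mf{h})\cong\mf{h}\otimes\mf{h}^*$, hence is $\mf{gl}_n$-invariant as an element of $\mf{h}\otimes\mf{h}^*$, and since $\mf{h}$ and $\mf{h}^*$ commute in $H_0$ the product $\sum_i v_iv_i^*$ is $(\ad\mf{gl}_n)$-invariant as an element of the algebra; concretely, $[e_{ij}, v_kv_k^*]=\delta_{jk}v_iv_k^* - \delta_{ik}v_kv_j^*$, and summing over $k$ the two terms telescope to zero. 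For $s_n$ one must do the analogous but longer bookkeeping: $s_n$ is built from the "mixed" tensors $e_{pq}v_qv_p^*$, and the point is that $\sum_{p,q}e_{pq}\otimes v_q\otimes v_p^* - \sum_p e_{pp}\otimes(\sum_q v_qv_q^*)$ assembles into a $\mf{gl}_n$-invariant combination; one checks $[e_{ij},s_n]=0$ by expanding $[e_{ij},e_{pq}]=\delta_{jp}e_{iq}-\delta_{iq}e_{pj}$ together with the action on the $v$'s, and observing all terms cancel in pairs after re-indexing.

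The main remaining step — and the one I expect to be the real obstacle — is verifying that $s_n$ commutes with the $v_k$ (equivalently, by $j$, with the $v_k^*$). Here $r_n$ is trivial: $[v_k, r_n]=\sum_i v_k v_iv_i^* - v_iv_i^* v_k = 0$ since all these elements commute in $H_0$. For $s_n$, the commutators $[v_k, e_{pq}v_qv_p^*]$ are nonzero precisely because $[v_k,e_{pq}]= -[e_{pq},v_k] = -\delta_{qk}v_p$ (and similarly $[v_k,e_{pp}]=-\delta_{pk}v_p$), so moving $v_k$ past the $\mf{gl}_n$-factor in each summand produces a degree-one-in-$\mf{gl}_n$, degree-two-in-$\mf{h}$ correction term, and one must show that the sum of all such corrections over $1\le p<q\le n$ vanishes identically. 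This is the heart of why $s_n$ is exactly this combination and not something simpler: the $e_{pp}v_qv_q^*+e_{qq}v_pv_p^*$ terms are tuned precisely so that their contributions cancel those coming from the $e_{pq}v_qv_p^*$ and $e_{qp}v_pv_q^*$ terms. I would organize this by fixing $k$, collecting all summands of $s_n$ in which $v_k$ fails to commute with the $\mf{gl}_n$-factor (these are the ones with $p=k$ or $q=k$), and checking the resulting expression in $\mf{h}^{\otimes 2}\subset H_0$ is zero; this is the one computation I would actually write out in full in the proof, the rest being routine or deducible via $j$.
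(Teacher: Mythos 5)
Your plan matches the paper's intended argument: the paper does not write out a proof of this proposition at all, but just asserts that it ``can be verified using a strategy similar to the proof of Proposition \ref{Psp},'' which is exactly the $j$-reduction plus direct commutator bookkeeping you describe, and the computations you sketch all check out. Two small caveats. First, the step where you claim $j$ fixes $s_n$ ``using $j(e_{pq})=e_{qp}$ and the commutativity of the $v$'s'' is slightly too quick: since the $v$'s do \emph{not} commute with $\mf{gl}_n$, $j(e_{pq}v_qv_p^*)=v_pv_q^*e_{qp}$ has the $\mf{gl}_n$ factor on the wrong side, and reordering produces the correction $-[e_{qp},v_pv_q^*]=v_pv_p^*-v_qv_q^*$; this cancels against the analogous correction from $j(e_{qp}v_pv_q^*)$, while the diagonal terms $e_{pp}v_qv_q^*$ ($p\neq q$) are fixed outright because $e_{pp}$ genuinely commutes with $v_qv_q^*$. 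So $j$-invariance of $s_n$ holds, but via a small cancellation rather than term-by-term. Second, the statement also claims $r_n,s_n$ are algebraically independent, which your outline does not address; this is easy (pass to $\Gr H_0 = \Sym(\mf{gl}_n\oplus\mf{h}\oplus\mf{h}^*)$ and compare leading terms), but since the proposition asserts it you should say a word about it even though the paper itself omits the point.
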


Next, we prove that in general, $H_\beta$ (over $\mf{gl}_n$) has
nontrivial center, by providing a lift $r_\beta$ of $r_n$; clearly,
$r_\beta$ is transcendental in $H_\beta$ since $r_n$ is thus in $H_0$.

\begin{prop}\label{Pgln}
For any $n,\beta$, $H_\beta$ contains the central element $r_\beta := \h
+ \tau$ (which is transcendental in $H_\beta$).
\end{prop}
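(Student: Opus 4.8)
The plan is to establish the two claims separately: that $r_\beta$ is transcendental over $k$, and that it is central in $H_\beta$. The transcendence is the easy half. Since $H_\beta$ is filtered (with $V$ in degree $1$ and $\mf{gl}_n$ in degree $0$) with associated graded $\Gr H_\beta\cong H_0=\mf{U}(\mf{gl}_n\ltimes(\mf{h}\oplus\mf{h}^*))$, an integral domain, and since $r_\beta$ is a lift of $r_n$ while $\tau=\Id_n$ has filtration degree $0$, the principal symbol of $r_\beta$ is $r_n=\sum_i v_iv_i^*$, which is non-nilpotent in $\Gr H_\beta$. A polynomial relation $\sum_{j\le d}c_j r_\beta^j=0$ with $c_d\neq 0$ would, on passing to top symbols, force $c_d(\sum_i v_iv_i^*)^d=0$, which is absurd; hence $r_\beta$ is transcendental (as noted just before the statement).

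For centrality it suffices to check that $r_\beta$ commutes with the algebra generators: the matrix units $e_{kl}$, the vectors $v_k\in\mf{h}$, and the covectors $v_k^*\in\mf{h}^*$. It is convenient to observe that $H_\beta$ carries a $\Z$-grading in which $\mf{h}$ has degree $+1$, $\mf{h}^*$ has degree $-1$ and $\mf{gl}_n$ has degree $0$ (the defining relations $[v_i,v_j]=[v_i^*,v_j^*]=0$ and $[v_i,v_j^*]=\sum_\ell\beta_\ell r_\ell(v_j^*,v_i)$ are homogeneous of degree $0$), and that $\tau=\Id_n$ is a grading element for it: $[\tau,v_k]=v_k$, $[\tau,v_k^*]=-v_k^*$, $[\tau,e_{kl}]=0$. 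Hence $r_\beta=\h+\tau$ is central if and only if $[\h,e_{kl}]=0$, $[\h,v_k]=-v_k$ and $[\h,v_k^*]=v_k^*$ for all $k,l$ — that is, $-\h$ is also a grading element.

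The first of these is immediate: $\h$ is $\mf{gl}_n$-invariant, its leading term $\sum_i v_iv_i^*$ being the image of $\Id_{\mf{h}}\in\mf{h}\otimes\mf{h}^*$ and its lower-order corrections being built invariantly. The crux is $[\h,v_k]=-v_k$: one expands $\h$ into its homogeneous pieces, applies the Leibniz rule, and uses the relation $[v_j,v_k^*]=\sum_\ell\beta_\ell r_\ell(v_k^*,v_j)$ to commute $v_k$ past each factor $v_jv_j^*$ of the top term $\sum_j v_jv_j^*$. This produces $-v_k$ together with a bundle of $\beta$-dependent error terms — involving $\tau$ and quadratic expressions in the $e_{ij}$, obtained after rewriting the $\mf{gl}_n$-valued part of the relation via the $\mf{gl}_n$-action on $\mf{h}$ (e.g. $v_ie_{ki}=e_{ki}v_i-v_k$) — and the lower-order correction in $\h$ is exactly what is needed so that its commutator with $v_k$ annihilates all of these. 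The identity $[\h,v_k^*]=v_k^*$ then follows by applying the anti-involution $j$ (which sends $v_k\leftrightarrow-v_k^*$ and fixes $\tau$ and $\h$) when $\beta$ is at most linear, and by a mirror-image direct computation for general $\beta$.

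The main obstacle is precisely this last cancellation: organizing the $\beta$-dependent error terms correctly and verifying that they telescope against $[(\text{correction}),v_k]$. For $\beta$ of degree greater than one this forces one to work coefficient-by-coefficient with the generating function $(x,(1-TA)^{-1}y)\det(1-TA)^{-1}$, and that bookkeeping carries essentially all the computational weight of the proof.
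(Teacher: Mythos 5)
Your outline of the reduction is sound, and the transcendence argument (leading symbol $r_n$ is non-nilpotent in $\Gr H_\beta$, an integral domain) matches the spirit of the paper's parenthetical remark. But your centrality argument leaves the substantive content undone, and it also silently skips a point the paper has to address.

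The first issue is that the commutation relations $[\h,v_k]=-v_k$, $[\h,v_k^*]=v_k^*$, $[\h,\mf{gl}_n]=0$ are exactly what the paper \emph{cites}: these are the content of \cite[Proposition~5.3]{EGG}, which is proved there for the Euler element of a continuous Hecke algebra in general. Your plan instead proposes to re-derive them by expanding $\h$ and commuting $v_k$ through $\sum_j v_jv_j^*$ coefficient-by-coefficient in the generating function $(x,(1-TA)^{-1}y)\det(1-TA)^{-1}$, and you acknowledge in the last paragraph that this bookkeeping ``carries essentially all the computational weight of the proof'' --- and then do not carry it out. As written, the crucial cancellation between the error terms and $[c,v_k]$ is asserted, not verified. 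The anti-involution shortcut you propose for $[\h,v_k^*]=v_k^*$ only applies when $\beta$ is at most linear (this is in the paper's own lemma), so for general $\beta$ you again fall back to a ``mirror-image direct computation'' that is not performed. In short, this is a plan for a proof, not a proof.

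The second issue is that you treat $\h$ as a bona fide element of $H_\beta$ without justification. The element $c$ appearing in $\h=\sum_i v_i^*v_i+\tfrac{n}{2}+c$ is a priori a distribution on $G$ (an element of $\bggo(G)^*$), not an element of $\mf{Ug}$; it is only because the infinitesimal Hecke algebra requires $\im(\kappa)\subset\mf{Ug}$ that one knows $f_{xy}\cdot c\in\mf{Ug}$ for all $x,y$, and one then needs the Nullstellensatz (over an algebraically closed field) to deduce $c\in\mf{Ug}$. This membership argument is the main thing the paper actually has to prove before invoking \cite{EGG}; your proposal omits it entirely, so even with the commutation computation filled in, the argument would not show that $r_\beta$ lies in $H_\beta$ at all.
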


\noindent Here, $\tau = \Id_n$, and $\h$ is the {\it Euler element} in
\cite[\S 5.2]{EGG}, given by
\[ \h = \sum_i v_i^* v_i + \frac{n}{2} + c, \]

\noindent where $c \in \bggo(G)^*$ is defined via the following equation
(see \cite[\S 3.4]{EGG}), with $t \in k$:
\[ \kappa(x,y) := [x,y] = (y,x)t + (y, (1-g)x)c, \mbox{ for all } x \in
\mf{h}^*, y \in \mf{h}. \]

\begin{proof}
(Note that $k$ is algebraically closed, of characteristic zero.) As
mentioned in \cite[\S 4.1]{EGG}, the infinitesimal Hecke algebra
$H_\beta$ only exists when $\im(\kappa) \subset \mf{Ug}$; thus, $f_{xy}
\cdot c \in \mf{Ug}$ for all $f_{xy} := (y, (1-g)x) \in \bggo(G)$ (with
$x \in \mf{h}^*, y \in \mf{h}$).
By the Nullstellensatz, $c \in \mf{Ug}$, so $\h \in \sum_i v_i v_i^* +
\mf{Ug}$ now; therefore $r_\beta$ is indeed a lift of $r_n$ to $H_\beta$.
That it is central follows from \cite[Proposition 5.3]{EGG}, and because
$\h, \tau$ commute with $\mf{gl}_n$.
\end{proof}

\section{Category $\bggo$ for Infinitesimal Hecke algebras}

At first, let us discuss an analogue of the BGG category $\bggo$, for a
class of algebras equipped with the following structure:\medskip

Let $A\supset k$ be an associative  algebra, endowed with the following
additional structure:
\begin{itemize}
\item $A$ has an increasing filtration by $k$-subspaces  $F^n A, n \geq
0$, that satisfy $F^n A \cdot F^m A \subseteq F^{n+m}A$; 

\item There are three finite-dimensional $k$-subspaces $\mfn^+, \mfn^-,
\mfh \subseteq F^1 A$, such that $\mfn^+ + \mfn^- + \mfh = \mfn^+ \oplus
\mfn^-\oplus \mfh$.
\end{itemize}\medskip

\noindent From these data we require that
\begin{itemize}
\item $A$ is generated as an algebra over $k$ by $\mfn^+\oplus
\mfn^-\oplus \mfh$; each summand is a Lie (sub)algebra, and
\[ [\mfh,\mfh]=0,\ [\mfh, \mfn^+] = \mfn^+,\ [\mfh,\mfn^-] = \mfn^-. \]

\item There is a (fixed) subspace $\mfh_0 \subset \mfh$, and both
$\mfn^+$ and $\mfn^-$ are diagonally acted upon by the adjoint action of
$\mfh$, and the eigenvalues occurring in these decompositions have images
in opposite non-intersecting cones in $\mfh_0^* (\twoheadleftarrow
\mfh^*)$.

\item The multiplication map $: B_1 \otimes B_2 \otimes B_3 \to
\Gr(F^\bullet A)$ is a vector space isomorphism, where $\{ B_1, B_2, B_3
\} = \{ \mf{Un}^-, \mf{Un}^+, \Sym(\mfh) \}$ (i.e., in every possible
order). Moreover, $\Sym \mfh \subset F^0 A$.

\item In addition, we require that $\Gr(F^\bullet A)$ is equipped with a
filtration consisting of finite-dimensional subspaces $G^n\ (n\geq 0)$,
such that $\mfn^+ \oplus \mfn^- \oplus \mfh \oplus k = G^1 \Gr(A)$, and
$\Gr(\Gr(F^\bullet A))$ is a polynomial algebra, i.e., $\Sym(\mfn^+
\oplus \mfn^- \oplus \mfh) \to \Gr(\Gr(F^\bullet A))$ is an isomorphism.
\end{itemize}

\noindent Moreover, if $A$ is such an algebra, then so are $\Gr(F^\bullet
A)$ and $\Gr(G^\bullet(\Gr(F^\bullet A)))$.\medskip

Of course, the main examples we have in mind are infinitesimal Hecke
algebras (the axiomatics of category $\bggo$ in more general settings is
considered in \cite{Kh2}). The axiom about $\mfh_0 \subset \mfh$ is
needed (later) for technical purposes: though we can choose $\mfh_0 =
\mfh$ for $H_z$ (over $\mf{sl}_2$), we need to choose $\mfh_0 = k h
\subset \mfh = k h \oplus k \tau$ in $H_\beta$ (for $\mf{gl}_2$).
Moreover, for infinitesimal Hecke algebras, we clearly have
$\Gr(F^\bullet H_\beta) = H_0 = \mf{U}(\mf{g} \ltimes V)$ and
$\Gr(G^\bullet H_0) = \Sym(\mf{g} \oplus V)$.

We now mimic some standard definitions.

\begin{defin}\hfill
\begin{enumerate}
\item The {\it category} $\bggo$ for the algebra $A$ (as above), denoted
by $\bggo_A$, is the full subcategory of finitely generated left
$A$-modules, defined by: $M \in \bggo_A$ if and only if $\mfn^+$ acts
locally nilpotently on $M$, and $\mfh$ acts on it diagonalizably with
finite-dimensional eigenspaces. That is, $M = \bigoplus_{\chi \in \mfh^*}
M^{\chi}$, with $\dim M^{\chi} < \infty\ \forall \chi$.

\item An element $v \in M$ is said to be a {\it maximal vector} if it is
an eigenvector for the $\mfh$-action, and $\mfn^+ v = 0$.

\item ({\bf Definition-proposition}.) Let $\chi \in \mfh^*$. Then there
exists an object $M(\chi) \in \bggo_A$, characterized by the following
uniqueness property: $M(\chi)^{\chi}=k$, and if $v\in M(\chi)^{\chi}$,
then for every pair $v_1, M_1$ with $v_1 \in M_1^\chi$ a maximal vector,
there exists a unique $f \in \Hom_A(M(\chi), M_1)$ such that $f(v) =
v_1$. Such a module $M(\chi)$ is called a {\it Verma module} for the
weight $\chi$.

\begin{proof}
Let $A_{-}$ be the subalgebra of $A$ generated by $\mfh \oplus \mfn^-$;
then there exists $\chi:A_{-}\to k$, such that $\chi|_{\mfh} = \chi,\
\chi(\mfn^-)=0$.
Indeed, we just need to check that $\mfh \cap \mfn^- A_- = 0$, which is
immediate from weight space theory. But then, $k$ turns into a left
$A_-$-module (which will be denoted by $k_\chi$).

Now define $M(\chi) := A \otimes _{A_-} k_\chi$. It is clear that this
module lies in $\bggo_A$ and $v = 1 \otimes 1$ is a maximal nonzero
vector of weight $\chi$. If $v_1\in M^{\chi}$ is a maximal vector in an
$A$-module ($M$), then we have a map of $A_-$-modules $f : k_{\chi} \to
M$ such that $f(1) = v_1$.
Hence we get $f \otimes_{A_-} \Id : A \otimes_{A_-} k_\chi \to A
\otimes_{A_-} M \to M$, such that $v$ maps to $v_1$; obviously this map
is unique.
\end{proof}
\end{enumerate}
\end{defin}\medskip

We have the following standard

\begin{prop}
For any $\chi \in \mfh^*,\ M(\chi)$ has a unique maximal subobject and
irreducible quotient (both in $\bggo_A$); call the latter $V(\chi)$. Then
every irreducible object in $\bggo_A$ is of the form $V(\chi)$ for some
$\chi \in \mfh^*$.
\end{prop}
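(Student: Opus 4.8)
The plan is to follow the classical BGG-category argument, adapting it to the axiomatic setting. First I would construct the maximal subobject of $M(\chi)$: let $N(\chi)$ be the sum of all submodules $N \subseteq M(\chi)$ with $N^\chi = 0$. Since each such $N$ lies in $\bggo_A$ (being a submodule of $M(\chi) \in \bggo_A$, using that $\mfn^+$ acts locally nilpotently and $\mfh$ diagonalizably with finite-dimensional eigenspaces — these properties pass to submodules), and since $N(\chi)^\chi = \sum N^\chi = 0 \neq k = M(\chi)^\chi$, we get $N(\chi) \subsetneq M(\chi)$. Moreover $N(\chi)$ contains every proper submodule: if $M' \subsetneq M(\chi)$ then $M'^\chi = 0$, because $M'^\chi \subseteq M(\chi)^\chi = k$ is either $0$ or all of it, and in the latter case $M'$ contains the generator $v = 1 \otimes 1$, forcing $M' = M(\chi)$. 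Hence $N(\chi)$ is the unique maximal proper subobject, and $V(\chi) := M(\chi)/N(\chi)$ is the unique irreducible quotient of $M(\chi)$.

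Next I would check that $V(\chi)$ is irreducible in $\bggo_A$: any nonzero submodule $W \subseteq V(\chi)$ has $W^\chi \neq 0$ (else its preimage in $M(\chi)$ would be a submodule strictly containing $N(\chi)$ but missing weight $\chi$, contradicting maximality of $N(\chi)$), so $W^\chi = (V(\chi))^\chi = k$ contains the image of $v$, hence $W = V(\chi)$. One should also note $V(\chi) \ncong V(\chi')$ for $\chi \neq \chi'$, since $\chi$ is recovered as the highest weight (the unique maximal-vector weight, which is extremal relative to the cone condition on $\mfn^-$-eigenvalues). This uses the axiom that the $\mfh$-eigenvalues of $\mfn^-$ lie in a cone in $\mfh_0^*$ not meeting the opposite one, so that $M(\chi) = \mf{Un}^- \cdot v$ has all its weights in $\chi + (\text{that cone})$, and $\chi$ is the top.

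Finally, for the surjectivity statement — every irreducible $L \in \bggo_A$ is some $V(\chi)$ — I would argue as follows. Since $L \neq 0$ lies in $\bggo_A$, it has a nonzero weight space, and among the weights appearing, the cone condition on $\mfn^+$-eigenvalues lets one pick a weight $\chi$ that is maximal in the appropriate partial order (any weight space is finite-dimensional and local nilpotence of $\mfn^+$ plus the cone separation forces such a maximal $\chi$ to exist on any finitely generated — indeed any — object of $\bggo_A$). Then any nonzero $v_1 \in L^\chi$ is a maximal vector. By the universal property of $M(\chi)$ there is a nonzero $A$-map $f : M(\chi) \to L$ with $f(v) = v_1$; its image is a nonzero submodule of the irreducible $L$, so $f$ is surjective, and it factors through $V(\chi) = M(\chi)/N(\chi)$ because $f(N(\chi))$ is a submodule of $L$ with zero $\chi$-weight-space, hence (by irreducibility, as above) is $0$. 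Thus $L \cong V(\chi)$.

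The main obstacle is verifying the existence of a maximal weight $\chi$ on an arbitrary nonzero $L \in \bggo_A$: one must combine the local nilpotence of $\mfn^+$ with the cone-separation axiom on $\mfh_0^*$ to rule out infinite ascending chains of weights, and confirm that a maximal-for-the-order weight gives an honest maximal vector (i.e. that $\mfn^+$ actually annihilates it, not merely that no strictly higher weight appears — here the grading of $\mfn^+$ into $\mfh$-eigenspaces with eigenvalues in the positive cone is what forces $\mfn^+ v_1 \subseteq \bigoplus_{\mu \in \chi + C^+} L^\mu = 0$). Everything else is the standard category-$\bggo$ bookkeeping, carried through verbatim with $\mf{Ug}$ replaced by the abstract $A$ and the triangular decomposition replaced by the axioms on $\mfn^\pm, \mfh$.
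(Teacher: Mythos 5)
Your proposal is correct and follows essentially the same route as the paper's: the unique maximal subobject is the sum of all proper submodules (each of which has trivial $\chi$-weight space), and every irreducible $L\in\bggo_A$ is hit by a nonzero map from some $M(\chi)$ because $L$ admits a maximal vector. The paper's proof is terser — it simply asserts the existence of a maximal vector in $L$, whereas you spell out the argument from local nilpotence of $\mfn^+$, the finite-dimensionality of weight spaces, and the cone-separation axiom; that extra care is warranted and does not change the underlying approach.
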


\begin{proof}
If $V\subset M(\chi)$ is a proper subobject, then $V^\chi = 0$. Hence the
sum of all proper subobjects of $M(\chi)$ is still a proper submodule,
which proves the first assertion. Now if $V$ is an irreducible object,
then it must have a maximal vector $v \in V^\chi$ for some $\chi$. Hence
$\Hom(M(\chi), V) \neq 0$, so $V = V(\chi)$.
\end{proof}
   
As is usual in representation theory, one would like to study
(irreducible) finite-dimensional representations, compute the
multiplicity of $V(\chi)$ in $M(\mu)$ (for all $\chi, \mu \in \mfh^*$),
and so on. One has the usual spectral decomposition of $\bggo_A$ with
respect to its center: $\bggo_A = \bigoplus_{\phi \in \spec (\mf{Z}(A))}
\bggo^\phi$, where $\bggo^\phi$ is the full subcategory consisting of
objects on which $\phi(t)-t$ acts locally nilpotently for any $t \in
\mf{Z}(A)$. In particular, we have a Harish-Chandra map $\eta : \mfh^*
\to \spec \mf{Z}(A)$.

Let us compare $\bggo_A$ and $\bggo_{\Gr(A)}$. If $M \in \bggo_A$, let $V
\subset M$ be a finite-dimensional vector space generating $M$ over $A$.
Then $M$ has the usual increasing filtration: $F^n M := (F^n A)V$, which
makes $\Gr(M)$ a $\Gr(A)$-module (note that this construction depends on
our choice of $V$).

Moreover, $\Gr(M)$ belongs to $\bggo_{\Gr(A)}$, and $\Ch_{\bggo_A}(M) =
\Ch(\Gr(M))$ (where $\Ch(M) := \sum_{\chi\in \mfh^*} (\dim
(M^{\chi})\chi)$ is the character of an $\mfh$-semisimple module). Hence
we see that $\bggo_{\Gr(A)}$ provides an ``upper bound" for $\bggo_A$
(i.e., $\Gr(M) \in \bggo_{\Gr(A)}\ \forall M \in \bggo_A$).

We also remark that if we start with a Verma module $M(\lambda)$ and $V =
k \cdot v_\lambda$ (the highest weight space in it), then we will get a
Verma module $\Gr(M(\lambda))$ over $\Gr(A)$ of weight $\lambda$. In
particular,
\begin{equation}\label{Eann}
\Gr(\Ann(M(\lambda))) \subseteq \Ann(\Gr(M(\lambda))).
\end{equation}

\noindent This fact is used in the section about primitive
ideals.\medskip

In the remaining part of this section, we focus on the category $\bggo$
for $A = H_z$ (which does fit into the above setup). This category was
studied in great detail in \cite{Kh}. We now reinterpret some of those
results using the center of $H_z$. We have $\bggo = \bigoplus_{\lambda
\in k} \bggo^\lambda$, where $(t_z - \lambda)$ acts nilpotently on
$\bggo^\lambda$ (though as we see presently, $\bggo = \bggo^0$ if $z =
0$).

At first, let us compute the action of $t_z$ on $M(\lambda)$. We have
\begin{eqnarray*}
&& (ey^2+hxy-fx^2 - \half hz - q_z) v_{\lambda}\\
& = & (y^2e + 2yx + z + hyx + hz - \half hz - q_z) v_{\lambda} = ((1
+ \half h) z - q_z) v_{\lambda}\\
& = & \left(\half \lambda +1 \right) z(\lambda^2 + 2\lambda) v_\lambda -
q_z (\lambda^2 + 2 \lambda)v_{\lambda}.
\end{eqnarray*}

\noindent Let us denote by $\phi_z(t)$ the following polynomial in
$k[t]$:
\begin{equation}\label{Ecenter}
\phi_z(t) = \left( \half t + 1 \right) z(t^2 + 2t) - q_z(t^2 + 2t),
\end{equation}

\noindent where as usual, we treat $z$ as a polynomial of $\Delta$ (note
that $\phi_0(t) \equiv 0$). As a corollary, $V(\lambda)\in \bggo^{\mu}$
only if $\phi_z(\lambda) = \mu$.

Now suppose $z \neq 0$. Then the degree of $\phi_z(t)$ equals $2(\deg(z)
+ 1)$, and the multiplicity of $V(\lambda)$ in $M$ is  at most $\dim_k
M^{\lambda}$. Hence all Verma modules - and thus, all objects in category
$\bggo$ - have finite length.

Moreover, every central character of $H_z$ is of the form $\chi_\mu : t_z
\mapsto \mu \in k$, and since $k$ is algebraically closed, and $\deg
\phi_z > 0$, we can find $\lambda \in k$ such that $\phi_z(\lambda) =
\mu$.
To summarize, we get the following result, most of which is contained in
\cite{Kh}, but is proved there by a completely different approach.

\begin{prop}
Each module in $\bggo^\lambda$ (for any $\lambda$) has finite length, and
$V(\mu) \in \bggo^\lambda$ if and only if $\mu \in \phi_z^{-1}(\lambda)$.
In particular, the number of non-isomorphic irreducible objects in
$\bggo^\lambda$ is at most $2(\deg(z)+1)$. Furthermore, every central
character for $H_z$ is associated to some Verma module.
\end{prop}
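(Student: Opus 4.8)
The plan is to read off everything from the polynomial $\phi_z(t)$ computed in \eqref{Ecenter}, together with the spectral decomposition $\bggo = \bigoplus_\lambda \bggo^\lambda$ and the elementary fact that each Verma module $M(\mu)$ has one-dimensional weight spaces (which follows from the PBW property: $M(\mu) \cong \Sym V \cdot v_\mu = k[x,y] v_\mu$ as an $\mfh$-module, so $M(\mu)^{\mu - n} $ is spanned by $x^a y^b v_\mu$ with $a+b=n$, hence finite-dimensional). First I would observe that $t_z$ acts on the highest weight vector of $M(\mu)$ by the scalar $\phi_z(\mu)$, by the explicit computation preceding \eqref{Ecenter}; since $t_z$ generates $\mf{Z}(H_z)$ (Theorem \ref{T1}) and is central, it acts by the same scalar $\phi_z(\mu)$ on all of $M(\mu)$, and hence on every subquotient, in particular on $V(\mu)$. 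Therefore $V(\mu) \in \bggo^\lambda$ if and only if $\phi_z(\mu) = \lambda$, which is the second assertion. For the finite length claim, I would argue as follows: if $z \neq 0$ then $\deg \phi_z = 2(\deg z + 1) > 0$, so $\phi_z^{-1}(\lambda)$ is a finite set for each $\lambda$; any composition factor of $M(\mu)$ is some $V(\nu)$ with $\nu$ a weight of $M(\mu)$ and $\phi_z(\nu) = \phi_z(\mu)$, so $\nu$ lies in the finite set $\phi_z^{-1}(\phi_z(\mu))$, and the multiplicity of each such $V(\nu)$ in $M(\mu)$ is bounded by $\sum_{\nu} \dim_k M(\mu)^{\nu} < \infty$; hence $M(\mu)$ has finite length, and since every module in $\bggo$ is a finitely generated $H_z$-module (hence a quotient of a finite direct sum of Vermas, using the standard argument that a finitely generated object in $\bggo$ is generated by finitely many maximal vectors), every object of $\bggo^\lambda$ has finite length. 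When $z=0$ one has $\phi_0 \equiv 0$, so $\bggo = \bggo^0$, and the finite-length claim does not hold (and is not being asserted here) — the statement as phrased quantifies over $\lambda$, so for $z = 0$ there is only the single block, and one should note the finiteness conclusion is only meaningful for $z\neq 0$; I would phrase the proof so that the length bound is extracted precisely in the $z \neq 0$ regime.

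For the count of non-isomorphic irreducibles in $\bggo^\lambda$: each such irreducible is $V(\mu)$ with $\phi_z(\mu) = \lambda$, and $\#\phi_z^{-1}(\lambda) \leq \deg \phi_z = 2(\deg z + 1)$ since $\phi_z$ is a nonconstant polynomial over the algebraically closed field $k$; this gives the asserted bound. For the last sentence, every central character of $H_z$ is a $k$-algebra homomorphism $\mf{Z}(H_z) \to k$, hence (since $\mf{Z}(H_z) = k[t_z]$) is evaluation $t_z \mapsto \mu$ for some $\mu \in k$; because $\deg \phi_z > 0$ and $k$ is algebraically closed, $\phi_z$ is surjective onto $k$, so there exists $\lambda$ with $\phi_z(\lambda) = \mu$, and then $M(\lambda)$ (equivalently $V(\lambda)$) realizes this central character.

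The only genuinely delicate point — the one I'd flag as the main obstacle — is the structural input that every object of $\bggo$ has a finite filtration by (subquotients of) Verma modules, which is what upgrades "Verma modules have finite length" to "all of $\bggo^\lambda$ has finite length." This requires knowing that a finitely generated module $M \in \bggo$ is generated by finitely many $\mfn^+$-maximal vectors (so that $M$ is a quotient of $\bigoplus_i M(\chi_i)$): one picks a finite-dimensional generating subspace, intersects with the highest-weight layers under the filtration $F^nM$, and uses local nilpotence of $\mfn^+$ to produce maximal vectors spanning the generators — this is entirely parallel to the classical BGG argument and uses only the axioms laid out before the Definition of $\bggo_A$. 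Everything else reduces to polynomial bookkeeping with $\phi_z$, for which the relevant formulas are already in hand via \eqref{Ecenter} and Corollary \ref{C3}.
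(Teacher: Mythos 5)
Your proof is correct and follows essentially the same route as the paper: compute the action of $t_z$ on a highest-weight vector to get $\phi_z$, observe that $\deg\phi_z = 2(\deg z + 1)$ so $\phi_z^{-1}(\lambda)$ is finite, bound composition multiplicities by weight-space dimensions, and invoke algebraic closure of $k$ for the last claim. The only point you elaborate beyond the paper (which just says ``thus, all objects in category $\bggo$ have finite length'') is the reduction from Verma modules to arbitrary objects; a slightly cleaner version of that step is to note that in any strictly increasing chain of submodules of $M$, each successive quotient contains a maximal vector of some weight $\nu\in\phi_z^{-1}(\lambda)$, so the length of the chain is bounded by $\sum_{\nu\in\phi_z^{-1}(\lambda)}\dim_k M^\nu < \infty$, avoiding the need to exhibit $M$ as a quotient of a finite sum of Vermas. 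Your caveat about $z=0$ is well placed: the finite-length conclusion genuinely fails there (e.g.\ $y^n v_\lambda$ is maximal in $M(\lambda)$ for every $n$ when $z=0$), and the paper's surrounding text has indeed restricted to $z\neq 0$.
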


As an aside, the algebra $H_z$ has the following peculiar property:

\begin{prop}
If the parameter $z$ is nonzero, then there are at most finitely many
non-isomorphic irreducible finite-dimensional $H_z$-modules.
\end{prop}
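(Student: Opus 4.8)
The plan is to show first that every finite-dimensional irreducible $H_z$-module is of the form $V(n)$ with $n\in\Z_{\geq 0}$, and then that $z\neq 0$ bounds $n$ via a trace identity. So let $M$ be a nonzero finite-dimensional irreducible $H_z$-module. Via $\mf{sl}_2=\mf{Ug}\hookrightarrow H_z$, $M$ is a finite-dimensional $\mf{sl}_2$-module; let $n\in\Z_{\geq 0}$ be its top $h$-weight. Every nonzero $v$ in the weight-$n$ space is killed by $e$ (which maps it into the zero weight-$(n+1)$ space) and by $x$ (likewise), hence is a maximal vector; since $M=H_z v$ by irreducibility, $M$ is a highest weight module, so $M\cong V(n)$. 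As any finite-dimensional irreducible with top weight $n$ is the unique irreducible quotient of the Verma module $M(n)$, the assignment $M\mapsto n$ is injective, and it suffices to bound $n$.

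Next I would determine $M$ as an $\mf{sl}_2$-module. Pick $0\neq w$ in the (nonzero) weight-$(-n)$ space; then $fw=0$, and also $yw=0$ since $yw$ has weight $-n-1$. Filter $M$ by $V$-degree, $F^kM:=(F^kH_z)w$; then $\Gr M$ is a module over $\Gr H_z=H_0=\mf{U}(\mf{sl}_2\ltimes V)$ generated by $\overline w$, with $\overline y\,\overline w=0$, so $\Gr M=\sum_{i\geq 0}\mf{Ug}\,\overline x^{\,i}\overline w$. Inside $H_0$, where $\overline x$ and $\overline y$ commute, $\overline f\,\overline x^{\,i}\overline w=\overline x^{\,i}\overline f\,\overline w+i\,\overline x^{\,i-1}\overline y\,\overline w=0$, so each $\overline x^{\,i}\overline w$ is an $\mf{sl}_2$-lowest weight vector of weight $-n+i$; in a finite-dimensional module such a vector vanishes once $-n+i>0$, and each nonzero $\mf{Ug}\,\overline x^{\,i}\overline w$ is a copy of the $(n-i+1)$-dimensional irreducible $L(n-i)$. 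Thus $\Gr M$ (hence $M$, to which it is $\mf{sl}_2$-isomorphic) is a quotient of $\bigoplus_{i=0}^nL(n-i)$; so $M\cong\bigoplus_{k\in S}L(k)$ for some $S\subseteq\{0,\dots,n\}$ with $n\in S$ (since $\overline w$ already generates a copy of $L(n)$).

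Finally, since $M$ is finite-dimensional, $0=\tr_M([x,y])=\tr_M(z(\Delta))$, and as $\Delta$ acts by $k(k+2)$ on $L(k)$ this reads $\sum_{k\in S}(k+1)\,z(k(k+2))=0$. If $z$ is a nonzero constant $c$, the left side is $c\dim M\neq 0$, so there are no finite-dimensional irreducibles at all. If $\deg z\geq 1$, then $z(k(k+2))$ has a fixed sign $\varepsilon$ and $|z(k(k+2))|\to\infty$ for $k$ large, so for $n$ large the term $k=n\in S$ forces $\varepsilon\sum_{k\in S}(k+1)z(k(k+2))\geq (n+1)|z(n(n+2))|-\sum_{k=0}^{K}(k+1)|z(k(k+2))|$, where $K$ is a constant; since the subtracted sum is independent of $n$ while $(n+1)|z(n(n+2))|$ grows like $n^{2\deg z+1}$, this is eventually positive, contradicting that the whole expression is $0$. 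Hence $n$ is bounded and $H_z$ has only finitely many non-isomorphic finite-dimensional irreducibles.

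The main obstacle is the middle step: one must check carefully that passing to $\Gr M$ really does turn each $\overline x^{\,i}\overline w$ into a genuine $\mf{sl}_2$-lowest weight vector (equivalently, that the $z$-terms produced when commuting $f$ past powers of $x$ do not spoil this), so that all $\mf{sl}_2$-isotypic multiplicities of $M$ are at most one; granting this structural fact, the trace argument that concludes the proof is short.
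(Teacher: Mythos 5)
Your approach is genuinely different from the paper's, and---modulo one gap at the end---it is correct and gives a pleasingly self-contained argument. The paper instead imports a precise finite-dimensionality criterion from \cite{Kh} ($V(r)$ is finite-dimensional iff a certain expression $\alpha_{r,r-s+2}$ vanishes for some $0\leq s\leq r$), rewrites it as $f(r+1)=f(s)$ for a fixed nonconstant polynomial $f$, and invokes Lemma~\ref{Lpoly} to conclude this can happen for only finitely many $r$. You avoid citing \cite{Kh} and instead show directly, via the associated graded with respect to a lowest-weight generator, that $M$ is multiplicity-free as an $\mf{sl}_2$-module, and then exploit the trace identity $\tr_M([x,y])=\tr_M(z(\Delta))=0$. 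Your worry about $z$-terms spoiling the lowest-weight computation is resolved precisely because the relation $[x,y]=z$ degenerates to $[\overline x,\overline y]=0$ in $\Gr H_z=H_0$, so $\overline f\,\overline x^{\,i}\overline w=i\,\overline x^{\,i-1}\overline y\,\overline w=0$ holds on the nose in $\Gr M$; and the observation that $\Gr M\cong M$ as $\mf{sl}_2$-modules (equal characters, both semisimple) is also fine. So the structural step you flagged as the main obstacle does go through.

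The gap is in the final estimate. You assert that for nonconstant $z$, ``$z(k(k+2))$ has a fixed sign $\varepsilon$ and $|z(k(k+2))|\to\infty$.'' This presupposes an order and an absolute value on $k$, which need not exist: $k$ is only an algebraically closed field of characteristic zero, and for instance $z(T)=iT$ over $\mathbb C$ has no ``sign.'' The repair is exactly the device used in the paper's Lemma~\ref{Lpoly}: choose a $\Q$-basis $\{b_1,\dots,b_s\}$ of the $\Q$-span of the coefficients of $z$, write $z=\sum_j z_j b_j$ with $z_j\in\Q[T]$, and note that the trace equation $\sum_{k\in S}(k+1)z(k(k+2))=0$ separates into the $s$ rational equations $\sum_{k\in S}(k+1)z_j(k(k+2))=0$; at least one $z_j$ is nonconstant, and to that one your sign-and-growth estimate applies verbatim. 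With this repair your proof is complete, and it is arguably more elementary than the paper's since it does not require the explicit Verma-module analysis of \cite{Kh}; as a bonus, it shows that a nonzero constant $z$ admits no finite-dimensional irreducibles at all.
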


\begin{proof}
For the proof, we are going to use a theorem proved by Khare in
\cite{Kh}. We need to recall some definitions from there. For any pair of
integers $r,m$, he considers the following expression:
\[ \alpha_{rm}=\sum_{i=0}^{m-2} (r+1-i)(z(r+1-i)^2-1) \]

\noindent (where $z(-)$ is viewed as a polynomial in the Casimir
element). Then his result (\cite[Theorem 11]{Kh}) says that $V(r)$ is
finite-dimensional if and only if there exists a nonnegative integer $s
\leq r$ such that $\alpha_{r,r-s+2}=0$.

Let us explain why this can not happen as long as $z \neq 0$ and $r$ is
large enough. We may rewrite $\alpha_{rm}$ as follows:
\[ \alpha_{rm}=\sum_{i=1}^{r+1} i z(i^2-1)-\sum_{i=1}^{r+2-m} i z(i^2-1),
\]

\noindent Therefore if we denote $\sum_{i=1}^j i z(i^2 - 1)$ by $f(j)$
(thus $f$ is a polynomial of some positive degree), then $\alpha_{rm} =
f(r+1) - f(r+2-m)$. So if $V(r)$ is finite-dimensional, then $f(r+1) =
f(r+2 - (r-s+2)) = f(s)$ for some $0 \leq s \leq r$. It thus suffices to
show that for a nonconstant polynomial $f \in k[T]$, the numbers $f(1),
f(2), \dots$ are ``eventually pairwise distinct"; we show this now, in
Lemma \ref{Lpoly}.
\end{proof}

\begin{lemma}\label{Lpoly}
Suppose $f \in k[T]$ is a nonconstant polynomial with coefficients in a
field of characteristic zero. Then beyond some $r_0 \gg 0$ (in $\Q
\hookrightarrow k$), $f : [r_0, \infty) \cap \Q \to k$ is injective.
\end{lemma}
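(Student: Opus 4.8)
The plan is to reduce the statement to a growth estimate by examining the derivative of $f$. Write $d := \deg f \geq 1$ and let $c \neq 0$ be the leading coefficient of $f$; then $f'$ has degree $d-1$ and leading coefficient $cd \neq 0$. The key observation is that for real (or rational) arguments, once we pass a threshold where $f'$ no longer changes sign, $f$ becomes strictly monotone, hence injective.

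\medskip

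Here are the steps I would carry out. First, note that all coefficients of $f$, and of $f'$, lie in some finitely generated subfield of $k$ over $\Q$; since char $k = 0$, we may embed everything into $\mathbb{R}$ (or even just reason with $\Q$, using that $f'$ has only finitely many real roots). Second, since $f'$ is a nonzero polynomial, it has at most $d-1$ roots in $\Q$ (indeed in any field), so there is a rational number $r_0$ larger than all of them. Third, on the interval $[r_0, \infty)$ the function $f'$ has constant sign (it is a continuous real function with no zeros there, so by the intermediate value theorem it cannot change sign; alternatively, factor $f'(T) = cd\prod (T - \alpha_i)$ and observe each factor $T - \alpha_i$ has fixed sign for $T > r_0 \geq \alpha_i$). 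Fourth, constant sign of $f'$ on $[r_0, \infty)$ forces $f$ to be strictly monotone there (by the mean value theorem applied to any two points $s < t$ in $[r_0,\infty)$: $f(t) - f(s) = f'(\xi)(t-s) \neq 0$). In particular $f$ restricted to $[r_0, \infty) \cap \Q$ is injective, which is exactly the claim.

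\medskip

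I do not expect any serious obstacle here; the statement is elementary real analysis once one fixes an embedding into $\mathbb{R}$. The only mild subtlety is that the lemma is stated over an abstract characteristic-zero field $k$, so one must be slightly careful that "injective" is being asserted only for arguments in $\Q \hookrightarrow k$ and values compared in $k$; this is handled by the fact that an embedding $\Q(\text{coefficients of } f) \hookrightarrow \mathbb{C}$ can be chosen landing in $\mathbb{R}$ after adjoining finitely many reals, or more simply by the purely algebraic fact that $f(s) = f(t)$ with $s \neq t$ forces $s - t$ to divide $0$ among the finitely many roots of $f(T) - f(s)$—but the cleanest route is the monotonicity argument above. A one-line alternative: $f(s) = f(t)$ with $s,t \in \Q$, $s \neq t$, means $t$ is a root of the degree-$d$ polynomial $f(T) - f(s)$; fixing $s$, this gives at most $d$ such $t$, but that does not immediately bound things uniformly in $s$, so the monotonicity argument is the right one to write down.
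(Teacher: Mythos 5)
Your argument has a genuine gap at the very first step: it is \emph{not} true that the finitely generated subfield of $k$ generated by the coefficients of $f$ can always be embedded into $\mathbb{R}$. For instance, if $k$ contains $i = \sqrt{-1}$ and a coefficient of $f$ equals $i$, the relevant subfield is $\Q(i)$, which admits no embedding into $\mathbb{R}$ (every embedding must send $i$ to a square root of $-1$, and $\mathbb{R}$ has none). Your parenthetical alternatives (``or even just reason with $\Q$'', ``landing in $\mathbb{R}$ after adjoining finitely many reals'') do not fix this: the sign of $f'$ on an interval is meaningless unless $f'$ has real values, which requires real coefficients, and adjoining reals cannot make $i$ real. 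So the monotonicity argument as written only covers polynomials whose coefficients generate a real-embeddable field.

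The paper avoids this by a short linear-algebra reduction: choose a $\Q$-basis $\{b_1,\dots,b_s\}$ of the $\Q$-span of the coefficients of $f$ and write $f(T) = \sum_i f_i(T) b_i$ with each $f_i \in \Q[T]$. Since $f$ is nonconstant, some $f_i$ is nonconstant, say $f_1$. Now for rational $r$, each $f_i(r)$ lies in $\Q$, so by $\Q$-linear independence of the $b_i$'s, an equality $f(r) = f(r')$ with $r, r' \in \Q$ forces $f_1(r) = f_1(r')$. But $f_1$ is a nonconstant \emph{rational} polynomial, so $|f_1(r)|$ is strictly increasing for $r \gg 0$ --- which is exactly the elementary real fact you wanted to use, now applied where it is valid. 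Your monotonicity idea is essentially the right engine; the missing ingredient is this coefficient decomposition that lets you run it on a polynomial with genuinely real (indeed rational) coefficients. An equivalent patch to your own route would be to embed into $\mathbb{C}$ (always possible for a finitely generated extension of $\Q$) and separate $f$ into real and imaginary parts, then apply your derivative argument to whichever part is nonconstant --- this is a special case of the $\Q$-basis trick.
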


\noindent This result does not generalize (much) more; consider $f(T) =
T^2$ evaluated at $0, 1, -1, 2, -2, \dots$ in $\Q$.

\begin{proof}
Consider the coefficients $c_0, \dots, c_d \in k$ of $f(T) = c_0 + c_1 T
+ \dots + c_d T^d$. Now choose any $\Q$-basis $\{ b_1, \dots, b_s \}$ of
the $\Q$-span of the $c_i$'s, and rewrite $f(T) = f_1(T) b_1 + \dots +
f_s(T) b_s$, where $f_i(T) \in \Q[T]$. Then at least one polynomial is
nonconstant, say $f_1$ (without loss of generality).

Now, the absolute value of $f_1(r)\ (r \in \Q)$ is a strictly increasing
function of $r$ for $r \gg 0$, and this proves the result (since the
$b_i$'s are $\Q$-linearly independent).
\end{proof}

\section{Primitive ideals of $H_z$}        

Let us start with the following definition.

\begin{defin}
We say that a (unital) $k$-algebra $A$ is {\it almost commutative (of
order 1)} if it admits an increasing filtration $F^\bullet A$ such that
the corresponding associated graded is a finitely generated commutative
$k$-algebra.

For $n>1$, we say that a $k$-algebra is {\it almost commutative of order
$n$} if it admits an increasing filtration compatible with the algebra
structure, such that the associated graded is an almost commutative
algebra of order $n-1$.
\end{defin}

We have the following direct generalization of Quillen's theorem
\cite{Q}, whose proof goes through essentially word by word; we reproduce
this proof for the reader's convenience. (In what follows, $k$ is an
arbitrary field.)\medskip

\begin{theorem}[Quillen]
Let $A$ be an almost commutative algebra of some order and let $M$ be a
simple module over $A$. If $\phi\in \End_A(M)$, then $\phi$ is algebraic
over $k$.
\end{theorem}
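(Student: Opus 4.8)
The plan is to adapt Quillen's original argument by induction on the order $n$ of almost commutativity. The base case $n=0$ is Schur's lemma for algebras over $k$ combined with the fact that a finitely generated commutative $k$-algebra acting simply on a module yields a division ring that is finitely generated as a $k$-algebra, hence a finite field extension of $k$ by the Nullstellensatz — in particular algebraic over $k$. So I would reduce the statement to showing: if the claim holds for algebras of order $n-1$, it holds for algebras of order $n$.

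For the inductive step, suppose $A$ has a filtration $F^\bullet A$ with $B := \Gr(F^\bullet A)$ almost commutative of order $n-1$. First I would localize the problem: replace $A$ by $A/\Ann(M)$, which is again almost commutative of the same order (it inherits the quotient filtration, and the associated graded surjects onto the one for $A$, so it stays finitely generated commutative at the bottom level). Thus we may assume $A$ acts faithfully on the simple module $M$, so that $A$ embeds in $\End_k(M)$ and $\phi \in \End_A(M)$ lies in the centralizer of $A$. The key point is then that $A[\phi]$, the subalgebra of $\End_k(M)$ generated by $A$ and $\phi$, is still almost commutative of order $n$: one extends the filtration by placing $\phi$ in some $F^j$, and the associated graded of $A[\phi]$ is a quotient of $B[\bar\phi]$ where $\bar\phi$ is central (since $\phi$ commutes with all of $A$). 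Central extensions preserve the finite-generation/commutativity at the bottom, so $A[\phi]$ remains of order $n$, and $M$ is still simple over $A[\phi]$ with $\phi$ now in its center.

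Now I would invoke the generalized Schur-type argument: in an almost commutative algebra of finite order acting simply, any central element is algebraic over $k$. Concretely, let $C$ be the center of $A[\phi]$; by the induction hypothesis applied to the (commutative) algebra $C$ — or rather, by noting that $C \subseteq \End_{A[\phi]}(M)$ and running the same reduction — every element of $C$, in particular $\phi$, is algebraic over $k$. The cleanest route is: the subalgebra $k[\phi] \subseteq \End_{A[\phi]}(M)$ is commutative, and since $M$ is simple over $A[\phi] \supseteq k[\phi]$... but $k[\phi]$ need not act simply, so instead I would pick a simple $A[\phi]$-subquotient and argue that $\phi$ acts there through a finitely generated commutative $k$-algebra which is a field, hence $\phi$ satisfies a polynomial, and faithfulness on $M$ (or rather that $\phi$ is not a zero divisor, being an element of a division ring $\End_A(M)$) lets one transfer this back.

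The main obstacle I anticipate is precisely this last transfer: ensuring that an algebraic relation found ``downstairs'' (on $\Gr$ or on a subquotient) actually lifts to an honest polynomial relation satisfied by $\phi$ in $\End_k(M)$. This is handled in Quillen's proof by the observation that if $\bar\phi$ (the symbol of $\phi$ in $B$) is nilpotent or algebraic, then some polynomial in $\phi$ drops filtration degree, and one descends; since $\End_A(M)$ is a division ring by Schur, a polynomial in $\phi$ landing in $F^{-1} = 0$ forces that polynomial itself to vanish, giving algebraicity of $\phi$. So the real content is the filtered descent, and I would carry it out by induction on filtration degree exactly as in \cite{Q}, using at the bottom the commutative Nullstellensatz case.
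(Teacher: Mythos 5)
The proposal does not close the argument; the decisive tool in the paper's proof (and in Quillen's original) is the \emph{generic flatness lemma}, which does not appear in your sketch. The paper proceeds as follows: regard $M$ as a module over $A[T]$ with $T\mapsto\phi$, take associated gradeds successively to land on a finitely generated module $\Gr(N)$ over a finitely generated commutative algebra of the form $C[T]$, apply generic flatness to produce a nonzero $f\in k[T]$ with $\Gr(N)$ free over $k[T]_f$, lift this freeness back through the filtrations to conclude $M$ is free over $k[T]_f$, and then derive a contradiction: since $\End_A(M)$ is a division ring, every nonzero polynomial in $\phi$ acts invertibly, so $M$ is a $k(\phi)$-vector space, which is incompatible with being a nonzero free module over the proper subring $k[T]_f\subsetneq k(T)$ unless $\phi$ is algebraic. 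Your proposal replaces this with a scheme that does not resolve the question.

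Concretely, three problems. First, your ``base case'' (Schur plus Nullstellensatz) only applies when $A$ itself is finitely generated commutative; but the order-$1$ case, where only $\Gr A$ is commutative, is already the full content of Quillen's theorem and needs generic flatness --- your induction therefore bottoms out at a case you haven't proved. Second, forming $A[\phi]\subseteq\End_k(M)$ and noting that $\phi$ is central there buys nothing: centrality of $\phi$ was already Schur's lemma, and you are left with the original problem (show a central element of a noncommutative almost-commutative algebra acting simply is algebraic), with no visible reduction in order. Third, your proposed ``filtered descent'' --- find a polynomial relation for a symbol $\bar\phi$, observe $p(\phi)$ drops filtration degree, iterate --- misdescribes Quillen's argument and does not work as stated: $\phi$ is not an element of the filtered algebra $A$, need not preserve the filtration on $M$, and has no canonical symbol whose algebraicity you could establish independently; this is precisely the transfer difficulty you yourself flag as the ``main obstacle,'' and it is exactly what generic flatness is there to circumvent.
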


\begin{proof}
Note the following elementary facts: if a $k$-algebra $B$ is filtered
with associated graded algebra $C = \Gr(F^\bullet B)$, then any finitely
generated $B$-module $M$ is automatically filtered as well: let $V$ be
the $k$-span of a (finite) set of generators for $M$, and define a
filtration on $M$ via:
\[ F^i M = F^i B \cdot V. \]

\noindent Then $\Gr F^\bullet M$ is automatically a finitely generated
$C$-module. Moreover, $\Gr(B[T]) = C[T]$. Finally, choose $\theta \in
\End_B M$; then $M$ is naturally a $B[T]$-module, via: $(b \otimes
p(T))(m) := p(\theta)(b \cdot m) = b \cdot p(\theta)(m)$. Then $\Gr
F^\bullet M$ is a finitely generated module over $C[T]$ (as mentioned in
\cite{Q}; here, $T \mapsto \Gr(\phi)$).\medskip

We now ``rewrite" the proof from \cite{Q}. Note that $M$ is an
$A[T]$-module as above (with $T \mapsto \phi$); taking the associated
graded of this (successively), we get a finitely generated module $N$
over $B[T]$, where $B$ is almost commutative, and $N$ is obtained from
$M$ by taking successive associated graded modules in a standard way.
Then $\Gr(N)$ is finitely generated over $\Gr(B[T])$.

By the generic flatness lemma (see \cite{Q}), there exists a nonzero
polynomial $f \in k[T]$, such that $\Gr(N)$ is free over $k[T]_f$. This
implies that $N$ is free over $k[T]_f$, whence we will get that so is $M$
(with $T \mapsto \phi$ when acting on $M$). On the other hand,
$\End_A(M)$ is a skew field, so $M$ is a vector space over $k(\phi)
\subset \End_A(M)$. This is a contradiction if $\phi$ is transcendental
over $k$.
\end{proof}

Next, recall the following definition from \cite{Gi}.

\begin{defin}
Let $k \subset A$ be an associative algebra endowed with two (non-unital)
finitely generated commutative subalgebras $A'_\pm$ and an element
$\delta \in A$. One says that this data defines an {\it algebra with
commutative triangular decomposition} if the following hold:
\begin{itemize}
\item $\ad \delta $ preserves both $A'_\pm$;

\item $\ad \delta$ acts diagonalizably on $A$; the eigenvalues for the
action on $A'_\pm$ lie in $\pm \Z_{>0}$; and

\item the algebra $A$ is finitely generated as an $A_-$-$A_+$ bimodule,
where $A_\pm := A'_\pm \oplus k \subset A$. (This differs from \cite{Gi}
in order to reconcile our notion of $\bggo$ to his.)
\end{itemize}
\end{defin}

\noindent In this case, Ginzburg's ``Generalized Duflo Theorem"
\cite[Theorem 2.3]{Gi} (which actually concerns a wider class of
algebras) says that primitive ideals are the same as prime ideals, and
are annihilators of simple objects of the appropriately defined BGG
category $\bggo$ (provided it has finitely many simple objects). Applying
this to our algebra $H_z$, we get:

\begin{theorem}[Analogue of Duflo's theorem]
Primitive ideals in $H_z$ are the same as prime ideals, and are
annihilators of simple objects in $\bggo$.
\end{theorem}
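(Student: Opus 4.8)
The plan is to deduce the theorem from Ginzburg's Generalized Duflo Theorem \cite[Theorem 2.3]{Gi} by checking that $H_z$ is an algebra with commutative triangular decomposition in the sense recalled above, that the associated category $\bggo$ is exactly the category $\bggo_{H_z}$ studied in the previous section, and then quoting his theorem. For the triangular data I would take $\delta := h$ together with the two (non-unital) subalgebras $A'_+ := \langle e,x\rangle$ and $A'_- := \langle f,y\rangle$ generated inside $H_z$ by $\mfn^+ = ke\oplus kx$ and $\mfn^- = kf\oplus ky$ respectively. These are commutative, since the relations in $H_z$ give $[e,x]=0=[f,y]$; they are visibly finitely generated; $\ad h$ preserves each of them because they are generated by $\ad h$-weight vectors; and $\ad h$ acts diagonalizably on all of $H_z$ by Corollary \ref{C0}. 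Moreover the $\ad h$-weights occurring on $A'_+$ are those of the monomials $e^ax^b$ with $(a,b)\neq(0,0)$, i.e.\ the positive integers $2a+b$, and on $A'_-$ they are their negatives, so the eigenvalue requirement holds. One also needs to match the two versions of category $\bggo$: the maximal vectors, Verma modules, and simple objects of the $\bggo$ built from $(\mfn^+,\mfn^-,\mfh = kh,\delta)$ coincide with those of $\bggo_{H_z}$ as defined earlier, which is a routine unwinding of the definitions.

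The axiom requiring genuine work is that $H_z$ is finitely generated as a bimodule over $A_\pm := A'_\pm\oplus k$. Here I would use the PBW theorem for $H_z$ (from \cite{Kh}; equivalently, $\Gr H_z = \mf{U}(\mf{g}\ltimes V)$ with $\Gr(\Gr H_z) = \Sym(\mf{g}\oplus V)$), together with the fact that $h = [e,f] = ef-fe$, so that $H_z$ is generated as an algebra by $\mfn^+\cup\mfn^-$ alone, and with the identity $\Delta = h^2+4ef-2h$ from Lemma \ref{L1} and the central element $t_z$ of Theorem \ref{T1} to rewrite products and reduce the powers of $h$ that a naive PBW expansion produces. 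I expect this finiteness check to be the principal obstacle: it is the least formal of Ginzburg's hypotheses, and it is precisely where the deformation term $z$ enters, so one should expect it to be cleanest after passing to a fixed central reduction as in the next paragraph, or to require a careful choice of the finite generating set.

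Granting the triangular structure, Ginzburg's theorem applies to an algebra of this type once the relevant category $\bggo$ has finitely many simple objects. To arrange this I would first invoke Quillen's theorem proved above: $H_z$ is almost commutative of order $2$ since $\Gr(\Gr H_z) = \Sym(\mf{g}\oplus V)$ is a finitely generated commutative algebra, so $\End_{H_z}(M)$ is algebraic over $k$ for every simple module $M$; as $k$ is algebraically closed this forces $\End_{H_z}(M) = k$, hence $M$ has a central character $t_z\mapsto\mu$ by Theorem \ref{T1}, and therefore every primitive ideal of $H_z$ contains $t_z-\mu$ for some $\mu\in k$. Thus it suffices to apply the theorem to each central reduction $H_z/(t_z-\mu)$, which inherits the triangular data and whose $\bggo$ is the block $\bggo^\mu$ of $\bggo_{H_z}$; for $z\neq 0$ this block has at most $2(\deg z+1)$ simple objects by the count established earlier, so Ginzburg's theorem gives, block by block, that in $H_z/(t_z-\mu)$ primitive ideals coincide with prime ideals and with the annihilators of the simple objects of $\bggo^\mu$. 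Reassembling over all $\mu$ and all blocks yields the statement for $H_z$. (When $z = 0$ the whole category $\bggo_{H_0}$ is a single infinite block and $t_0$ acts by $0$ on every object, so this last reduction degenerates and the case $z=0$, i.e.\ $\mf{U}(\mf{sl}_2\ltimes k^2)$, should be handled separately.)
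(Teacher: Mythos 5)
Your overall strategy matches the paper's exactly: invoke Quillen's theorem (via almost-commutativity of order $2$) to show every simple $H_z$-module has a central character, pass to the central reduction $R_\lambda := H_z/(t_z-\lambda)H_z$, verify that $R_\lambda$ carries a commutative triangular decomposition with $\delta=\overline{h}$ and $A_\pm$ the images of $k[e,x]$, $k[f,y]$, observe that $\bggo_{R_\lambda}\subseteq \bggo^\lambda_{H_z}$ has finitely many simples, and then quote Ginzburg's Generalized Duflo Theorem. So the route is right. However, you have a genuine gap where the paper does its real work: the bimodule-finiteness axiom. You correctly flag this as the ``principal obstacle'' and even observe that it should be attacked after passing to the central reduction, but you never actually prove it; listing the relations $h=ef-fe$, $\Delta=h^2+4ef-2h$, $t_z$ does not constitute a proof, and the discussion preceding it (trying to verify finite generation for $H_z$ itself) is hopeless: by the PBW factorization $H_z = B_-\otimes k[h]\otimes B_+$ with $k[h]$ infinite-dimensional, $H_z$ is \emph{not} finitely generated as an $A_-$-$A_+$ bimodule, so one must reduce modulo $t_z-\lambda$ before finiteness even has a chance.

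The paper supplies the missing argument concretely. Since $R_\lambda$ is spanned by $B_-\cdot k[h]\cdot B_+$, it suffices to find a finite-dimensional $M\subset k[h]$ with $h^i\in B_- M B_+$ for all $i$. Take $M=k\oplus kh\oplus\cdots\oplus kh^{2\deg(z)+1}$. Working modulo $B_-MB_+$, the relation $\lambda = t_z = ey^2+hxy-fx^2-\tfrac12 hz - q_z$ reduces (after moving $e$'s to the right and $y$'s to the left, and using $\Delta = 4ef+h^2-2h$ so that $h^a\Delta^b \in B_-\cdot k[h]/(h^{a+2b+1})\cdot B_+$) to an equation that lets one express $h^{2\deg(z)+2}$ inside $B_-MB_+$, using crucially that $\deg(q_z)=\deg(z)+1$ in $\Delta$, i.e.\ Corollary~\ref{C3}. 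This is exactly where the nontriviality of $z$ enters, consistent with your (correct) observation that the $z=0$ case degenerates. Without carrying out this computation your proposal does not establish the theorem; I would regard it as an accurate outline with the central estimate unproved.
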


\begin{proof}
Let $R_\lambda := H_z / (t_z - \lambda) H_z$. Given a primitive ideal $I
\subset H_z$, we get a simple $H_z$-module $M$; since $k = \overline{k}$,
Quillen's theorem says that $M$ is a simple $R_\lambda$-module for some
$\lambda \in k$.

Suppose we show that $A = R_\lambda$ is a finitely generated
$A_-$-$A_+$-bimodule, where $A_\pm$ are the images of $B_+ := k[e,x], B_-
:= k[f,y]$ (respectively) under the quotient map $(a \mapsto
\overline{a}) : H_z \twoheadrightarrow R_\lambda$. Then Ginzburg's
theorem holds for $R_\lambda$ (using $\delta = \overline{h}$ and $A'_\pm$
to be the augmentation ideals in $A_\pm$). Moreover, the category
$\bggo_{R_\lambda}$ is contained in $\bggo_{H_z}^\lambda$, the summand in
the spectral decomposition mentioned in a previous section, and hence it
contains only finitely many simples.

Thus, primitive ideals for $H_z$ are indeed annihilators of simple
objects in $\bggo_{H_z}$. Moreover, $\overline{I}$ is prime, hence so is
$I$. Conversely, if $I$ is prime, then so is $\overline{I}$, whence it
annihilates a simple object in $\bggo_{R_\lambda}$. Thus, $I$ annihilates
some $V(\mu) \in \bggo_{H_z}$.\medskip

Therefore, it suffices to show that $H_z / (t_z - \lambda) H_z$ is
finitely generated as an $A_-$-$A_+$ bimodule for any $\lambda \in k$. In
view of the PBW decomposition $H_z = B_- \otimes k[h] \otimes B_+$, it
will suffice to show that $h^i \in B_- M B_+\ \forall i$, for some
finite-dimensional $M$.

We claim that we may take $M = k \oplus kh \oplus \dots \oplus
kh^{2\deg(z)+1}$. Indeed,
\[ \lambda = t_z = ey^2 + hxy - fx^2 - \half hz - q_z \equiv z + \half hz
- q_z \mod B_- M B_+.\]

\noindent Now note that $\Delta = 4ef + (h^2 - 2h)$, whence (abusing
notation)
\[ h^a \Delta^b \in B_- \cdot k[h] / (h^{a+2b+1}) \cdot B_+\ \forall a,b
\geq 0. \]

\noindent In particular, $z, hz \in B_- M B_+$, so that $q_z \in B_- M
B_+$. On the other hand, since $\deg(q_z) = \deg(z) + 1$ and since
$h^{2\deg(z)+2} \in k q_z + k[f] M k[e]$, we get that $h^{2\deg(z)+2} \in
B_- M B_+$. From this, it follows that for any $i,\ h^i \in B_- M B_+$.
\end{proof}

It is an interesting problem to determine for which pairs of weights
$\lambda,\mu$, one has $I_\lambda := \Ann(V(\lambda)) \subset I_{\mu} :=
\Ann(V(\mu))$. As a first step, we have the following

\begin{theorem}
If the central element $t_z$ acts on $M(\lambda)$ by multiplication by
$\alpha$, then $\Ann(M(\lambda))$ is a two sided ideal generated by $t_z
- \alpha$ in $H_z$.
\end{theorem}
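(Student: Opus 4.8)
The inclusion $(t_z-\alpha)H_z\subseteq\Ann(M(\lambda))$ is immediate: $t_z$ is central, it acts on the cyclic module $M(\lambda)=H_zv_\lambda$ by the scalar $\alpha$ (this is the computation recorded right after \eqref{Ecenter}, where $\alpha=\phi_z(\lambda)$), and $(t_z-\alpha)H_z=H_z(t_z-\alpha)$ is two-sided. The substance is the reverse inclusion, and my plan is to first pass to the associated graded and then localize.

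For the first step, filter $H_z$ by $V$-degree, so $\Gr H_z=H_0=\mf{U}(\mf{sl}_2\ltimes V)$, a domain. The principal symbol of $t_z-\alpha$ is $t=ey^2+hxy-fx^2$ (all remaining terms of $t_z-\alpha$ lie in $\mf{Ug}=F^0$), which is a non-zero-divisor in $H_0$; hence $\Gr((t_z-\alpha)H_z)=tH_0$. Filtering $M(\lambda)$ by $F^nM(\lambda)=F^nH_z\cdot v_\lambda$, the associated graded $\Gr M(\lambda)$ is the Verma module $M_0(\lambda)$ for $H_0$ of highest weight $\lambda$ (noted just before \eqref{Eann}), which is free of rank one over $k[f,y]$, with $v_\lambda$ killed by $e$ and $x$. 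Since $\Gr(\Ann(M(\lambda)))\subseteq\Ann_{H_0}(M_0(\lambda))$, while taking $\Gr$ of the easy inclusion gives $tH_0\subseteq\Gr(\Ann(M(\lambda)))$, it suffices to prove
\[ \Ann_{H_0}(M_0(\lambda))=tH_0 ; \]
then $\Gr(\Ann(M(\lambda)))=tH_0=\Gr((t_z-\alpha)H_z)$, and the standard symbol-lifting induction on filtration degree upgrades this to $\Ann(M(\lambda))=(t_z-\alpha)H_z$.

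For the displayed equality, $\supseteq$ is clear ($tv_\lambda=0$ since $z=0$, and $t$ is central), so the point is $\subseteq$, which I would get by localizing. Since $V$ is abelian, $\ad y$ is locally nilpotent on $H_0$, so $\{y^n\}$ is an Ore set; and $y$ is a non-zero-divisor on $M_0(\lambda)=k[f,y]v_\lambda$ and — because $\Gr(H_0/tH_0)=k[e,f,h,x,y]/(ey^2+hxy-fx^2)$ is a domain — on $H_0/tH_0$. The crucial identity is $t=y^2(e+w)$ with $w:=y^{-2}(2xy+hxy-fx^2)\in\mf{U}(\mf{m})[y^{-1}]$, where $\mf{m}:=kf\oplus kh\oplus kx\oplus ky$ is the $4$-dimensional Lie subalgebra of $\mf{sl}_2\ltimes V$. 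As $t$ is central and $y^2$ a unit, $tH_0[y^{-1}]$ coincides with the one-sided ideal $H_0[y^{-1}](e+w)$, and $\{(e+w)^a:a\geq0\}$ is a free $\mf{U}(\mf{m})[y^{-1}]$-basis of $H_0[y^{-1}]$ (an upper-triangular modification of the PBW basis $\{e^a\}$); hence $H_0[y^{-1}]/tH_0[y^{-1}]\cong\mf{U}(\mf{m})[y^{-1}]$ as algebras. Under this isomorphism the action on $M_0(\lambda)[y^{-1}]=k[f,y^{\pm1}]$ is: $f,y$ by multiplication, $x$ by $-y\partial_f$, and $h$ by $\lambda-2f\partial_f-y\partial_y$. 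These operators generate the whole ring $\mathcal D$ of differential operators on $k[f,y^{\pm1}]$ (e.g. $\partial_f=y^{-1}\cdot(-y\partial_f)$, $\partial_y=y^{-1}\cdot(y\partial_y)$), which is simple; and since $\mathcal D$ and $\mf{U}(\mf{m})[y^{-1}]$ both have Gelfand--Kirillov dimension $4$, while a nonzero two-sided ideal in $\mf{U}(\mf{m})[y^{-1}]$ — whose associated graded is a commutative affine domain — would strictly lower the GK-dimension, the surjection $\mf{U}(\mf{m})[y^{-1}]\to\mathcal D$ is an isomorphism. Thus $M_0(\lambda)[y^{-1}]$ is the natural (faithful) $\mathcal D$-module, so $\Ann_{H_0[y^{-1}]}(M_0(\lambda)[y^{-1}])=tH_0[y^{-1}]$. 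Contracting along $H_0\hookrightarrow H_0[y^{-1}]$ (annihilators of finitely generated modules are compatible with Ore localization, and $H_0/tH_0\hookrightarrow(H_0/tH_0)[y^{-1}]$) gives $\Ann_{H_0}(M_0(\lambda))\subseteq tH_0$, as needed.

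The reductions in the second paragraph (the non-zero-divisor computation for associated gradeds, the symbol-lifting induction) are routine; the real work is the localization step — verifying $t=y^2(e+w)$ with $w$ in the subalgebra, checking $tH_0[y^{-1}]=H_0[y^{-1}](e+w)$ and the freeness of $\{(e+w)^a\}$, and then pinning down $\mf{U}(\mf{m})[y^{-1}]\cong\mathcal D$. If one prefers not to invoke that nonzero ideals drop GK-dimension in such algebras, one can instead verify by hand that the displayed operators already span $\mathcal D$ and that $M_0(\lambda)[y^{-1}]$ is precisely its tautological representation, which is manifestly faithful; this amounts to the same computation.
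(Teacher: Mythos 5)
Your proposal is correct in outline, and its core step is genuinely different from the paper's. The easy inclusion and the reduction of general $z$ to $z=0$ coincide with the paper: both use \eqref{Eann}, the fact that $\Gr(M(\lambda))$ is a Verma module over $H=H_0$, and an induction on filtration degree to lift the equality $\Ann_{H_0}(\Gr M(\lambda))=tH_0$ to $\Ann(M(\lambda))=(t_z-\alpha)H_z$. Where you diverge is the proof that $\Ann_{H_0}(M_0(\lambda))=tH_0$. The paper argues by hand inside $H_0$: since both sides are $\ad\mf{g}$-submodules, it suffices to treat lowest-weight vectors $g$ with $[f,g]=0$; evaluating on the vectors $y^nv_\lambda$ and invoking Lemma \ref{Lpoly} kills the coefficients $g_{00l}$, the cases $N=0,1$ are disposed of separately, and then one inducts on the $V$-degree by subtracting elements of $tH$ constructed from the centralizer description of Proposition \ref{P3}. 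You instead localize at the Ore set $\{y^n\}$, use $t=y^2(e+w)$ with $w\in\mf{U}(\mf{m})[y^{-1}]$ to identify $H_0[y^{-1}]/tH_0[y^{-1}]$ with $\mf{U}(\mf{m})[y^{-1}]$, realize the localized Verma module as (all) differential operators on $k[f,y^{\pm1}]$, and contract back using that $y$ is regular modulo $t$. Your route is shorter conceptually, uniform in $\lambda$, and yields extra structural information (a differential-operator realization of the localized quotient); the paper's route is more elementary and stays entirely within the PBW combinatorics it has already developed.

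One step of yours needs tightening: the injectivity of $\mf{U}(\mf{m})[y^{-1}]\to\mathcal{D}$, which is exactly the statement that $k[f,y^{\pm 1}]$ is faithful over $\mf{U}(\mf{m})[y^{-1}]$. Your GK-dimension argument presumes a filtration of the localized algebra whose associated graded is a commutative affine domain, but the naive filtration (placing $f,h,x,y,y^{-1}$ in degree $\leq 1$) fails this: the relation $y\cdot y^{-1}=1$ forces the product of the symbols of $y$ and $y^{-1}$ to vanish in the graded. Your fallback, as stated, also misses the point slightly, since faithfulness of the tautological $\mathcal{D}$-module is automatic; what is needed is that the surjection $\mf{U}(\mf{m})[y^{-1}]\twoheadrightarrow\mathcal{D}$ has zero kernel. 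Both issues are repaired by the direct computation you allude to: the image of a PBW monomial $f^ah^bx^cy^d$ ($d\in\Z$) is a differential operator of order $b+c$ with principal symbol $\pm f^ay^d(2f\xi_f+y\xi_y)^b(y\xi_f)^c$, and these symbols are linearly independent in $k[f,y^{\pm1},\xi_f,\xi_y]$, so the map is injective. It is also worth making explicit, in the contraction step, that $\Ann_{H_0}(M_0(\lambda))$ is a two-sided ideal (hence stable under $\ad y$), which is what guarantees it annihilates the localized module. With these points supplied, your proof is complete.
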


\begin{proof}
For the proof, at first we assume that $z=0$. In this case $t_z = t =
ey^2 + hxy - fx^2$ always acts by 0 on all Verma modules, so there is
only one block. Thus we need to show that $\Ann(M(\lambda)) = tH$. As
both sides of the desired equality are $\ad \mf{g}$-submodules of $H$,
and since the annihilator obviously contains $tH$, it will suffice to
prove that if we have any ($h$-weight vector) $g \in H$ such that $[f, g]
= 0 = g M(\lambda)$, then $g \in tH$. (We are considering ``lowest weight
vectors" inside $H$, which is a direct sum of finite-dimensional
$\mf{g}$-modules.)

Write $g$ as $\sum g_{ijl} h^l e^i x^j$ where $g_{ijl} \in k[f,y]$. Since
by assumption $[f,g] = 0$ then $g M(\lambda) = 0$ if and only if
\[ g v_\lambda = 0 = gy^n v_\lambda = \sum g_{ijl} [h^l, y^n] e^i x^j
v_\lambda = \sum g_{00l} [h^l, y^n] v_\lambda\ \forall n, \]

\noindent where the penultimate equality follows because $[x,y] = 0$ and
$[e,y] v_\lambda = 0$. But $h y^n = y^n h - y^n$, so we get
\[ g y^n v_\lambda = \sum_l g_{00l} y^n (h^l - (h - n)^l) v_\lambda = 0,
\]

\noindent whence (cancelling $y^n$ on the left in $M(\lambda) \cong B_- =
k[f,y]$, an integral domain) we get that $f(n) = 0$ for all $n$, where
\[ f(T) = \sum_{l>0} g_{00l} (\lambda - T)^l - \sum_{l>0} g_{00l}
\lambda^l \in k[T] \]

\noindent By Lemma \ref{Lpoly} (and induction on $l$), we conclude that
\begin{equation}\label{Enocomp}
g_{00l} = 0\ \forall l > 0
\end{equation}

Next, rewrite $g$ as $\sum_{n=0}^N \sum_{i=0}^n a_{in} x^{n-i} y^i$,
where $a_{in} \in \mf{Ug}$. Using the ``dividing trick" \eqref{Etrick},
we may assume that $g$ is not divisible by $y$ from the right, so some
$a_{0n} \neq 0$. Now, we have
\[ 0 = [f,g] = \sum_{i,n} (n-i) a_{in} x^{n-i-1} y^{i+1} + \sum_{i,n} [f,
a_{in}] x^{n-i} y^i, \]

\noindent so $[f, a_{i+1,n}] = (n-i) a_{in}$ for all $i,n$. In
particular, $[f,a_{0n}] = 0\ \forall n$. Since $H$ is a direct sum of
finite-dimensional $\mf{g}$-modules, wt$(g)$ must be nonpositive, so
wt$(a_{0n})\leq -n$.\medskip

There are only two steps remaining. First, we claim that $N>1$ if $g \neq
0$, and second, if so, then we can find $a \in tH$ such that $g-a$ has
``smaller $N$-value"; this finishes the proof, by induction on $N$.

Suppose $N=0$ first. Then by a result similar to Lemma \ref{L2}, $g =
a_{00} = p(\Delta) \cdot f^l$ for some $l \geq 0$ and $p \in k[T]$. If
this kills $y^n v_\lambda\ \forall \lambda$, then
\[ p \left( (\lambda - 2l - n)^2 + 2(\lambda - 2l - n) \right) = 0\
\forall n \]

\noindent and this would imply that $p$ is a constant, by Lemma
\ref{Lpoly}. This contradicts that $p \cdot f^l$ annihilates
$M(\lambda)$, unless $p=0$.

Next, suppose $N=1$ and $g = a_0 x + a_1 y + a_2$ (with all $a_i \in
\mf{Ug}$), so that $a_0, a_2 \in k[f, \Delta]$. (Then $a_2 = 0$ by
considering the parity of the possible weights.) Moreover, $a_1 = [e,
a_0] + b$, where $[f,b]=0$; therefore $a_1 y$ will contain a PBW monomial
not containing $e,x$ and containing $h$. But this contradicts equation
\eqref{Enocomp} above.\medskip

This proves the first step; moreover, $f | a_{0N}$, since wt$(a_{0N}) <
N$ and $a_{0N} \in k[f, \Delta]$. Now consider $a_{0N}/f$; as in the
proof of Proposition \ref{P3}, there exists an element $g' = (a_{0N}/f)
x^{N-2} + \sum_{i=1}^{N-2} c_i x^{N-2-i} y^i$ which commutes with $f$.
Thus, $g + g't \in \Ann(M(\lambda))$ commutes with $f$, and it is
divisible by $y$ from the right, hence we may divide by it. Proceeding by
induction on $N$, the result is proved when $z=0$.\medskip

Now let $z$ be arbitrary. Given $\lambda \in k$, recall the inclusion in
equation \eqref{Eann}: $\Gr(\Ann(M(\lambda))) \subseteq
\Ann(\Gr(M(\lambda))$. Moreover, $\Gr(M(\lambda))$ is just a Verma module
over $H$. Therefore if $g \in \Ann(M(\lambda))$, then $g = (t_z - \alpha)
g' + g''$, where $g''$ has lower filtration degree than $g$ (since $g''
\in \Ann(M(\lambda))$). Proceeding by induction on the filtration degree
of $g$, we are done.
\end{proof}\hfill

We conclude by considering the constant parameter case: $z=1$. The
following theorem describes the primitive spectrum of $H_z$, as well as
the multiplicities of irreducible modules in Verma modules.

\begin{theorem}
For $\lambda \neq \mu,\ V(\lambda),V(\mu)$ lie in the same block if and
only if $\lambda+\mu=-3$, and $M(\lambda)$ is irreducible if and only if
$\frac{3}{2}+\lambda$ is not a positive integer. Otherwise we have $0 \to
V(-3-\lambda) \to M(\lambda) \to V(\lambda) \to 0$, and $I_{-3-\lambda}
\subsetneq I_{\lambda}$.
\end{theorem}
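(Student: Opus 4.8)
The plan is to separate this into three tasks: identify the central‑character blocks via the polynomial $\phi_z$ of \eqref{Ecenter}; determine the maximal vectors of $M(\lambda)$ to read off reducibility and the short exact sequence; and deduce the comparison of annihilators from the description of $\Ann(M(\lambda))$ already established, proving the strict inclusion by exhibiting an explicit element. For the blocks, specialize $z=1$: by the last displayed formula of Section \ref{S2}, $t_z=ey^2+hxy-fx^2-\tfrac12 h+\tfrac12\Delta$, so $q_z=\tfrac14-\tfrac12\Delta$, and \eqref{Ecenter} gives
\[
\phi_1(t)=\left(\tfrac12 t+1\right)-\left(\tfrac14-\tfrac12(t^2+2t)\right)=\tfrac12 t^2+\tfrac32 t+\tfrac34=\tfrac12\left(t+\tfrac32\right)^2-\tfrac38 .
\]
By the spectral decomposition $\bggo=\bigoplus_\lambda\bggo^\lambda$ and the proposition describing $\phi_z^{-1}$, the modules $V(\lambda),V(\mu)$ lie in a common $\bggo^\nu$ if and only if $\phi_1(\lambda)=\phi_1(\mu)$, i.e. $(\lambda+\tfrac32)^2=(\mu+\tfrac32)^2$, i.e. $\lambda=\mu$ or $\lambda+\mu=-3$; this is the block assertion for $\lambda\neq\mu$.

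Next, reducibility. A proper submodule of $M(\lambda)$ is generated by maximal vectors of weights $\mu<\lambda$; since $t_z$ acts on $M(\lambda)$ by the scalar $\phi_1(\lambda)$, any simple subquotient $V(\mu)$ has $\phi_1(\mu)=\phi_1(\lambda)$, forcing $\mu=-3-\lambda$, and since the weights of $M(\lambda)$ are $\lambda-\Z_{\geq 0}$ this already requires $2\lambda+3\in\Z_{>0}$. To pin down the exact threshold one computes the maximal vectors: writing a candidate of weight $-3-\lambda$ as $\sum c_{ab}f^ay^bv_\lambda$ and imposing annihilation by $e$ and by $x$ (one may first restrict to one $\Z/2$‑homogeneous component, using the grading of $H_1$ in which $\mf{g}$ is even and $V$ odd), the resulting linear recursion in the $c_{ab}$ has a nonzero solution precisely when $\tfrac32+\lambda\in\Z_{>0}$ (this is essentially the analysis of \cite{Kh}). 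In that case the space of maximal vectors at $-3-\lambda$ is one‑dimensional; a generator $w=\theta v_\lambda$ with $\theta\in k[f,y]$ yields a nonzero map $M(-3-\lambda)\to M(\lambda)$, injective because $(-3-\lambda)+\tfrac32=-(\lambda+\tfrac32)\notin\Z_{>0}$ makes $M(-3-\lambda)$ irreducible. Comparing characters (and using that all composition factors lie in the block) shows its image $V(-3-\lambda)$ is the unique maximal submodule, so $M(\lambda)/V(-3-\lambda)=V(\lambda)$, giving $0\to V(-3-\lambda)\to M(\lambda)\to V(\lambda)\to 0$; and $M(\lambda)$ is irreducible when $\tfrac32+\lambda\notin\Z_{>0}$.

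For the annihilators, set $\alpha:=\phi_1(\lambda)$. By the theorem proved above, $\Ann(M(\mu))=(t_z-\phi_1(\mu))H_z$ for every $\mu$; since $-3-\lambda$ lies in the same block, $\Ann(M(\lambda))=\Ann(M(-3-\lambda))=(t_z-\alpha)H_z$, and as $M(-3-\lambda)=V(-3-\lambda)$ this equals $I_{-3-\lambda}$. Because $V(\lambda)$ is a quotient of $M(\lambda)$, $I_\lambda\supseteq\Ann(M(\lambda))=I_{-3-\lambda}$. For strictness, observe that $k[f,y]$ is a commutative subalgebra of $H_z$ (since $[f,y]=0$); hence, with $w=\theta v_\lambda$ and $\theta\in k[f,y]$, one has $\theta\cdot(p\,v_\lambda)=p\theta v_\lambda=p\,w\in k[f,y]w=V(-3-\lambda)$ for all $p\in k[f,y]$, so $\theta$ annihilates $V(\lambda)=M(\lambda)/V(-3-\lambda)$, i.e. $\theta\in I_\lambda$. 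But $\theta v_\lambda=w\neq 0$, so $\theta\notin\Ann(M(\lambda))=I_{-3-\lambda}$. Therefore $I_{-3-\lambda}\subsetneq I_\lambda$.

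The main obstacle is the ``parity'' phenomenon in the middle step: a maximal vector of weight $-3-\lambda$ exists exactly when $\tfrac32+\lambda$ is a positive integer, not merely when $2\lambda+3$ is. Everything else (the block computation, injectivity of $M(-3-\lambda)\hookrightarrow M(\lambda)$, the character comparison, and the strictness $I_{-3-\lambda}\subsetneq I_\lambda$) is formal given the earlier results; but the vanishing of the recursion's solution on the ``odd'' residues seems to need the explicit solution of that recursion (as in \cite{Kh}), with no short conceptual shortcut, so that is where I expect the real work to lie.
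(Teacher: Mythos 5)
Your proof follows essentially the same three-step route as the paper: identify blocks via the Harish--Chandra polynomial, find the non-trivial maximal vectors of $M(\lambda)$, and then compare annihilators. The block computation is the same (the quadratic is $\tfrac12 t^2+\tfrac32 t + \text{const.}$; the two conventions for $t_z$ differ by an additive scalar, which is immaterial). Your strictness argument for $I_{-3-\lambda}\subsetneq I_\lambda$ is identical to the paper's: the commutativity of $k[f,y]$ shows the generator $\theta$ of the maximal submodule lies in $\Ann(V(\lambda))$ but not in $\Ann(M(\lambda))=(t_z-\alpha)H_z$.

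The genuine gap is exactly the one you flag: you defer the determination of the maximal vectors to \cite{Kh} and guess that the recursion ``needs the explicit solution, with no short conceptual shortcut.'' But the paper has a clean shortcut for $z=1$: one checks $[x,\,y^2+2f]=0$. This immediately forces any $x$-killed weight vector $\sum a_if^iy^{n-2i}v_\lambda$ to satisfy $(n-2i)a_i=(i+1)a_{i+1}$, which has no nonzero solution for $n$ odd and a one-dimensional solution space for $n=2m$ given by $(y^2+2f)^mv_\lambda$; then the single line $e(y^2+2f)^m v_\lambda = m(2\lambda+3-2m)(y^2+2f)^{m-1}v_\lambda$ pins down the threshold $\lambda=m-\tfrac32$. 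So the ``real work'' you anticipate collapses to two short computations once you notice $y^2+2f$ centralizes $x$. A second, minor point: your ``comparing characters'' clause for concluding $M(\lambda)/V(-3-\lambda)=V(\lambda)$ is not by itself decisive (characters alone do not immediately rule out higher multiplicity of $V(-3-\lambda)$); the clean justification is the \emph{uniqueness} of the non-highest maximal vector just established, which shows any lift of a maximal vector of $M(\lambda)/V(-3-\lambda)$ of weight $-3-\lambda$ is already $(y^2+2f)^m v_\lambda$ and hence dies in the quotient.
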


\noindent In particular, (primitive) annihilator ideals for $\lambda \neq
\mu$ are either not comparable ($\lambda \neq -\mu-3$), or equal
($\lambda = -\mu - 3 \notin \half + \Z$), or strictly comparable
(otherwise).

\begin{proof}
Recall that in this case, the central element is equal to $t_1 := ey^2 +
hxy - fx^2 - \half h + \half \Delta$, so it acts on $V(\lambda)$ by the
scalar $1 + \half (\lambda+((\lambda+1)^2-1))$, hence $V(\lambda),V(\mu)$
lie in the same block if and only if $\lambda = \mu$, or $\lambda +
\mu=-3$.

Next, note that $[x, y^2 + 2f] = 2y - 2y = 0$, therefore
$x(y^2+2f)^nv_{\lambda}=0$. We now determine when $(y^2+2f)^n
v_{\lambda}$ is annihilated by $e$. Using that $[x, y^2 + 2f] = 0$, we
have
\begin{eqnarray*}
0 & = & e(y^2 + 2f)^n v_{\lambda} = [e, (y^2 + 2f)^n] v_{\lambda}\\
& =& \sum_{l<n} (y^2 + 2f)^l (2(yx+h) + 1) (y^2 + 2f)^{n-l-1}\\
& = & n (2\lambda + 3 - 2n) (y^2 + 2f)^{n-1} v_{\lambda}.
\end{eqnarray*}

\noindent Hence if $n$ is minimal among those for which $(y^2 + 2f)^n
v_{\lambda}$ is a maximal vector, we must have $\lambda = n -
\frac{3}{2}$. Now assume that $g=\sum_i a_i f^i y^{n-2i} v_{\lambda}$ is
a maximal vector; then $x \cdot g$ must vanish. In other words,
\[ 0 = \sum a_i [x, f^i y^{n-2i}] v_{\lambda} = \sum (n-2i) a_i f^i
y^{n-2i-1} v_{\lambda} - \sum i a_i f^{i-1} y^{n-2i+1} v_{\lambda}. \]

\noindent This implies that $(n-2i)a_i = (i+1)a_{i+1}$ for all $i.$
Hence, $n$ is even and this system of equalities has exactly one solution
up to multiplication by a constant; therefore $g=(y^2+2f)^{n/2}
v_\lambda$.

To conclude, we have shown that $M(\lambda)$ is irreducible if
$\frac{3}{2}+\lambda$ is not a positive integer, and otherwise we have
the desired short exact sequence. Finally, since $(y^2+2f)^n\in
\Ann(V(\lambda))$, therefore
\[ \Ann(V(\mu)) = \Ann(M(\mu)) = \Ann(M(\lambda)) \subsetneq
\Ann(V(\lambda)), \]

\noindent where $\lambda + \mu = -3$.
\end{proof}\medskip

\begin{acknowledgement}
We thank the referee for his patient reading, and for making numerous
suggestions and comments, which helped in improving this manuscript.
\end{acknowledgement}

\end{document}